\newcommand{\diam}[1]{\mathrm{diam}(#1)}
\newcommand{\ext}{C_{\rm{ext}}(D,s)}
\newcommand{\prnt}[1]{\left( #1 \right)}
\newcommand{\norm}[1]{\left\|#1\right\|}
\newcommand{\normL}[2]{\norm{#1}_{L^2\prnt{#2}}}
\newcommand{\normHp}[3]{\norm{#1}_{H^{#2}\prnt{#3}}}
\newcommand{\normI}[2]{\norm{#1}_{C\prnt{#2}}}
\newcommand{\normLH}[3]{\norm{#1}_{L^2(\Omega,H^{#2}\prnt{#3})}}
\newcommand{\lebesgue}{\mathcal{L}^{(d)}}
\newcommand{\mc}[1]{\mathcal{#1}}
\newcommand{\proj}{\Pi_{\Vh;L^2}}
\newtheorem{theorem}{Theorem}[section]
\newtheorem{assumption}{Assumption}[section]
\newtheorem{lemma}{Lemma}[section]
\newtheorem{corollary}{Corollary}[section]
\newtheorem{proposition}{Proposition}[section]
\numberwithin{equation}{section}
\newcommand{\om}{\omega}
\newcommand{\nsamples}{M}
\newcommand{\nsamplesrun}{m}
\newcommand{\kllevel}{L}
\newcommand{\kllevelrun}{\ell}
\newcommand{\Vh}{\mathcal{V}_h}
\newcommand{\dimVh}{Q_h}
\newcommand{\dimVhrun}{k}
\newcommand{\spn}{\text{span}}
\renewcommand{\vec}[1]{\boldsymbol{#1}}
\newcommand{\lnrf}{\kappa}
\newcommand{\lnrfrv}{K}
\newcommand{\distribution}{\mu}
\newcommand{\prob}{\mathbb{P}}
\newcommand{\dom}{\mathcal{D}}
\newcommand{\R}{\mathbb{R}}
\newcommand{\N}{\mathbb{N}}
\newcommand{\taperingint}{\tau}
\newcommand{\taperingweight}{w}
\newcommand{\transpose}{T}
\newcommand{\CholStiffnessL}{\vec{L}^{(h)}}
\newcommand{\stiffnessmatrix}{\vec{S}^{(h)}}
\newcommand{\stiffnessmatrixM}{\vec{S}^{(h;\nsamples)}}
\newcommand{\tildestiffnessmatrixM}{\widetilde{\vec{S}}^{(h;\nsamples)}}
\newcommand{\tildstiffnessmatrix}{\widetilde{\vec{S}}^{(h)}}
\newcommand{\massmatrix}{\vec{M}^{(h)}}
\newcommand{\sigmaalg}{\mathcal{F}}
\newcommand{\constC}{C_{\infty}} 	
\newcommand{\constHs}{C_{H^{s}}}
\newcommand{\expec}{\mathbb{E}}
\newcommand{\normc}[2]{\left\|#1 \right\|_{C(#2)}}
\newcommand{\expect}[1]{\mathbb{E}\left[#1 \right] }
\newcommand{\decayclass}{\mathcal{F}}
\title{On the Numerical Approximation of the Karhunen-Lo\`{e}ve Expansion for Random Fields with Random Discrete Data}
\author{Michael Griebel\thanks{Institut f\"ur Numerische Simulation,
            Universit\"at Bonn,
            Friedrich-Hirzebruch-Allee 7, 53115 Bonn, Germany and Fraunhofer SCAI, Schloss Birlinghoven, 53754 Sankt Augustin, Germany,
            E-mail: griebel@ins.uni-bonn.de
             } \; and
             Guanglian Li\thanks{Department of Mathematics, The University of Hong Kong, Pokfulam Road, Hong Kong. E-mail: lotusli@maths.hku.hk} \; and Christian Rieger\thanks{Philipps-Universit\"at Marburg, Fachbereich Mathematik und Informatik, AG Numerik,
Hans-Meerwein-Stra\ss{}e 6, 35032 Marburg.
            E-mail:riegerc@mathematik.uni-marburg.de
             }
}
\date{}
\begin{document}
\maketitle
\begin{abstract}
Many physical and mathematical models involve random fields in their input data. Examples are ordinary differential equations, partial differential equations and integro--differential equations with uncertainties in the coefficient functions described by random fields. They also play a dominant role in problems  in machine learning. In this article, we  do not assume to have knowledge of the moments or expansion terms of the random fields but we instead have only given discretized samples for them. We thus model some measurement process for this discrete information and then approximate the covariance operator of the original random field. Of course, the true covariance operator is of infinite rank and hence we can not assume to get an accurate approximation from a finite number of spatially discretized observations. On the other hand, smoothness of the true (unknown) covariance function results in effective low rank approximations to the true covariance operator.
We derive explicit error estimates that involve the finite rank approximation error of the covariance operator, the Monte-Carlo-type errors for sampling in the stochastic domain and the numerical discretization error in the physical domain.
This permits to give sufficient conditions on the three discretization parameters to guarantee that an error below a prescribed accuracy $\varepsilon$ is achieved.

{\bf Keywords}: covariance operators, eigenvalue decay, approximation of Gaussian-type random fields, error estimates, Galerkin methods for eigenvalues, finite elements, tapering estimators for sample covariance
\end{abstract}
{\bf Subject classification: 41A25, 41A35, 60F10,	65D40}

\section{Introduction}\label{sec:intro}
Mathematical models with random coefficients or random input data  have been widely employed to describe applications
that are affected by a certain amount of uncertainty arising from imperfect or insufficient information about the problem. The range of applications is broad and diverse and includes uncertainty quantification with ordinary differential equations, partial differential equations and integro-differential equations or problems in machine learning and data analysis in, e.g. oil field modeling, quantum mechanics or finance.

For instance in oil field modeling, a common source of uncertainty stems from the unknown soil parameters.
Often, one assumes a statistical model for the soil, for instance a Gaussian process with specified mean and covariance, and fits the possibly
remaining hyper-parameters of the process to the measured data. 
Moreover in machine learning, a common source of randomness stems from noisy observed data. This renders all  subsequent quantities to be random fields. Again one often assumes a statistical model for the noise, for instance a Gaussian model, which perfectly fits to the linear  Bayesian  framework as then the posterior is also Gaussian.
Anyway, let now a probability space $(\Omega,\sigmaalg,\prob)$ be given, where $\sigmaalg$ denotes a $\sigma$-algebra of measurable sets and $\prob$ is a probability measure on this $\sigma$-algebra. On this probability space, we consider a Gaussian random field
\begin{align*}
\lnrf:\Omega \times \dom \to \R.
\end{align*}
Having such a Gaussian random field at hand, one usually parametrizes
it by means of the Karhunen-Lo\`{e}ve (KL) expansion or a polynomial chaos (PC) expansion
\cite{ghanem2003stochastic}, which fits to the $L^2$ setting, i.e.,
\begin{align*}
\lnrf(\vec{\om},\vec{x})=\sum_{\ell=1}^{\infty} \psi_{\ell}(\vec{\om}) \phi_{\ell}(\vec{x}).
\end{align*}
This greatly facilitates the subsequent treatment of problems with $\lnrf$ modeling their random input or their random coefficient functions,
e.g., by the stochastic Galerkin method or the stochastic collocation method. Alternatively, in the Banach space setting, one may expand the random field with respect
to the hierarchical Faber basis or some wavelet type basis; see \cite{Bachmayr.Cohen.Dinh.Schwab:2017}
for details.

To further fix notation, a realization of the associated stochastic process is the function
\begin{align*}
\lnrf(\vec{\om},\cdot): \dom \to \R \quad \vec{\om} \in \Omega \text{ fixed}.
\end{align*} 
We now can interpret the stochastic process $\lnrf$ as a $\R^{\dom}$-valued random variable, i.e.,
\begin{align*}
\lnrfrv: \Omega \to \R^{\dom}, \quad \om \mapsto \lnrf(\vec{\om},\cdot):\dom \to \R,
\end{align*}
where $\R^{\dom}$ denotes the set of all maps $\dom \to \R$.
Hence, using the notion of a push forward measure, we define a distribution of this random variable $\lnrfrv$, and thus the random 
process $\lnrf$, as
\begin{align*}
\distribution=\prob \circ \lnrfrv^{-1}.
\end{align*}
Any Gaussian random field $\lnrf$ is determined completely by its first two moments, i.e.,
\begin{align*}
	\expect{\lnrf}&:\dom \to \R, \quad \vec{x} \mapsto \mathbb{E}[\lnrf]=\expect{\lnrf(\cdot,\vec{x})} \quad \text{and} \\
	\text{Cov}_{\lnrf}&:\dom \times \dom \to \R, \quad (\vec{x},\vec{x}^{\prime})\mapsto  \expect{\left(\lnrf-\expect{\lnrf(\cdot,\vec{x})}\right)\left(\lnrf(\cdot,\vec{x}^{\prime})-\expect{\lnrf(\cdot,\vec{x}^{\prime})}\right)}.
\end{align*}
For the sake of simplicity, we will assume that $\lnrf$ is a centered Gaussian field, i.e., $\expect{\lnrfrv}\equiv 0$.
This yields
\begin{align}\label{carlemannkern}
\text{Cov}_{\lnrf}(\vec{x},\vec{x}^{\prime})
=:R(\vec{x},\vec{x}^{\prime})=\int_{\Omega}\lnrf (\vec{\om},\vec{x}) \lnrf (\vec{\om},\vec{x}^{\prime})\, \mathrm{d}\prob(\vec{\om}) = \sum_{\kllevelrun=1}^{\infty} \lambda_{\kllevelrun} \phi_{\kllevelrun} (\vec{x})\phi_{\kllevelrun}(\vec{x}^{\prime}),
\end{align}
where the series is a Mercer-type expansion. 
We will assume that the eigenvalues are sorted in strictly decreasing order, i.e., 
\begin{align}\label{sortedeigenvals}
\lambda_1 > \lambda_2> \dots \ge 0.
\end{align}
In the case of multiple eigenvalues the projection onto the eigenspaces, i.e., 
\begin{align}\label{truemercer}
R(\vec{x},\vec{x}^{\prime})=\sum_{\kllevelrun \in \N} \lambda_{\ell} \Pi_{\ell}(\vec{x},\vec{x}^{\prime})= \sum_{\kllevelrun \in \N}\lambda_{\ell} \sum_{k=1}^{N(\kllevelrun,k)} \phi_{\kllevelrun,k}(\vec{x}) \phi_{\kllevelrun,k}(\vec{x^{\prime}}),
\end{align}
 is to be employed of course, but for reasons of simplicity we will stick here to the notationally simpler Mercer series \eqref{carlemannkern} instead.
By repeating multiple eigenvalues and modifying our approach accordingly, the more general case of multiple eigenvalues can be covered in an analogous way.

Usually, such an expansion will be truncated yielding a so-called finite noise approximation. To be precise, the approximation 
\begin{align}\label{RL}
R(\vec{x},\vec{x}^{\prime}) \approx R^{\kllevel}(\vec{x},\vec{x}^{\prime}):=\sum_{\kllevelrun=1}^{\kllevel} \lambda_{\kllevelrun}\phi_{\kllevelrun}(\vec{x})  \phi_{\kllevelrun}(\vec{x}^{\prime})
\end{align}
is considered, 
where the quality of the approximation relies on sharp eigenvalue decay estimates as given in Theorem \ref{thm:truncationError} later on. This approach however needs the set $\{ \phi_{\kllevelrun}\}$ of continuous eigenfunctions of the covariance operator $R$ in the first place, which is in general unknown. While its knowledge is often a priorily assumed in the literature for further numerical analysis, such complete information is mostly not available in practice and such an approach is thus often not viable.

Therefore, in the following, we consider a different situation: We  do not assume to have access to the moments of the random field $\lnrf$ in our analysis. Thus we can not compute its Karhunen Lo\`{e}ve expansion directly as an eigenvalue problem for the integral operator with the covariance functions as kernel.
Instead, we will only assume
\begin{align}\label{finitemoment}
	\constC :=\left(\int_{\Omega} \normc{\lnrf\left(\vec{\om} , \cdot\right)}{\dom}^2  \, \mathrm{d}\prob(\vec{\om})\right)^{\frac{1}{2}}<\infty.
\end{align}
Note that \eqref{finitemoment} implies uniform bounds on mean and covariance functions of the random field $\lnrf$.  
Indeed, the definition of the mean function and an application of the Cauchy-Schwarz inequality leads to
\begin{align}\label{eq:cinf1}
\left| \expect{\lnrf(\cdot,\vec{x})}\right|=\left|\int_{\Omega}\lnrf(\cdot,\vec{x}) \, \mathrm{d}\prob\right|
\le \int_{\Omega}\left|\lnrf(\cdot,\vec{x}) \right|\, \mathrm{d}\prob \le \left(\int_{\Omega}\left|\lnrf(\cdot,\vec{x}) \right|^2\, \mathrm{d}\prob \right)^{\frac{1}{2}}\left(\int_{\Omega}1\, \mathrm{d}\prob \right)^{\frac{1}{2}}\le \constC.
\end{align}
Moreover, we can obtain by the definition of the covariance function and an application of the Cauchy-Schwarz inequality the bound
\begin{align}\label{eq:cinf2}	
\text{Cov}(\vec{x},\vec{x}^{\prime})
&=\int_{\Omega}\left(\lnrf(\cdot,\vec{x})-\expect{\lnrf(\cdot,\vec{x})}\right)
\left(\lnrf(\cdot,\vec{x}^{\prime})-\expect{\lnrf(\cdot,\vec{x}^{\prime})}\right)\, \mathrm{d}\prob \nonumber \\
	&\le\left( \int_{\Omega}\left(\lnrf(\cdot,\vec{x})-\expect{\lnrf(\cdot,\vec{x})}\right)^2\, \mathrm{d}\prob \right)^{\frac{1}{2}}\left( \int_{\Omega}\left(\lnrf(\cdot,\vec{x}^{\prime})-\expect{\lnrf(\cdot,\vec{x}^{\prime})}\right)^2\, \mathrm{d}\prob\right)^{\frac{1}{2}} \nonumber \\
	&\le 2\left( \int_{\Omega} \lnrf(\cdot,\vec{x})^2+\expect{\lnrf(\cdot,\vec{x})}^2 \, \mathrm{d}\prob \right)^{\frac{1}{2}}\left( \int_{\Omega}\lnrf(\cdot,\vec{x}^{\prime})^2+\expect{\lnrf(\cdot,\vec{x}^{\prime})}^2\, \mathrm{d}\prob\right)^{\frac{1}{2}}
	\le 4\constC^2.
\end{align}
Furthermore, we define
$\constHs :=\left(\int_{\Omega} \normHp{\lnrf\left(\vec{\om} , \cdot\right)}{s}{\dom}^2  \, \mathrm{d}\prob(\vec{\om})\right)^{\frac{1}{2}}<\infty$ for later use as well.
We refer to \cite[Theorem 5.2]{steinwart2017} for conditions on $\lnrf$ which allow for such types of bounds.

Next we assume some spatial regularity of the random field $\lnrf$.
\begin{assumption}\label{A:11} Let $\lnrf:\Omega \times \dom \to \R$ and thus $\lnrfrv:\Omega \to \R^{\dom}$ be  contained in the Bochner space
\begin{align}\label{eqA11}
L^{\infty}(\Omega, H^{s}(\dom)) \quad \text{for some } s\ge 0,
\end{align}
where, for $s=0$, we have $H^{0}(\dom)=L^{2}(\dom)$.
Alternatively, let $\lnrf:\Omega \times \dom \to \R$ and thus $\lnrfrv:\Omega \to \R^{\dom}$ be contained in the Bochner space
\begin{align}\label{eqA22}
L^{\infty}(\Omega, H^{s}(\dom)) \quad \text{for some } s>\frac{d}{2}.
\end{align}
\end{assumption}
Now we introduce a discretization of the function space $\R^{\dom}:=\{f:\dom \to \R\}$.
To be precise, we consider an underlying finite element space of the form
\begin{align}\label{eq:FEspace}
\Vh:=\{v\in H^{1}(\dom):v|_{K}\in P^{
\lceil s \rceil }(K) \text{ for all } K\in \mc{T}_h\} \subset \R^{\dom},
\end{align}
where $\mathcal{T}_{h}$ is a regular quasi-uniform triangulation over the
physical domain $\dom$ with a maximal mesh size $h$ and $ \lceil s \rceil  \in \N$ will depend on the spatial regularity of $\lnrf$.
We denote the dimension of $\Vh$ by $\dimVh <\infty$. Moreover, we expect $\dimVh = \mathcal{O}(h^{-d}s^d)$.
For the rest of the paper, we will fix some arbitrary basis of $\Vh$
\begin{align*}
	\Vh=\spn\left\{\theta_{\dimVhrun}^{(h)} \ : \  1 \le \dimVhrun \le \dimVh \right\}.
\end{align*}
Here we may employ a nodal basis as well as a discrete orthonormal basis later on.
Let $\vec{\theta}^{(h)}$ be 
\begin{align*}
\vec{\theta}^{(h)}: \dom \to \R^{\dimVh}, \quad \vec{x} \mapsto \left(\theta^{(h)}_{1},\dots,\theta^{(h)}_{\dimVh} \right)^{\transpose}.
\end{align*}
Any function $v^{(h)} \in \Vh$ then admits the expression
\begin{align*}
v^{(h)}= \vec{V}^{(h)}\cdot \vec{\theta}^{(h)}
\quad \text{with} \quad \vec{V}^{(h)} = \left( V^{(h)}_{1},\dots, V^{(h)}_{\dimVh} \right)^{\transpose}\in \R^{\dimVh},
\end{align*}
and each function $v^{(h)} \in \Vh$ can be identified with its coefficient vector $\vec{V}^{(h)}\in \R^{\dimVh}$.

Next we consider the projection map $\Pi_{\Vh}:L^{2}(\dom) \to \Vh$. Thus $s=0$ in \eqref{eqA11} from Assumption \ref{A:11}, which is here the weakest assumption possible.
Given such regularity, we recall the approximation power of the $L^2$-projection $\Pi_{\Vh;L^2}: L^2(\dom)\to \Vh$, which is given in \cite[Theorem 4.4.20]{MR2373954} as
\begin{align}
\normL{v-\Pi_{\Vh}v}{\dom}&\leq C_{\Pi_{\Vh;L^2}} h^{s}\normHp{v}{s}{\dom} \text{ for all } v\in H^s(\dom),\label{eq:approxL2}\\
\normI{v-\Pi_{\Vh}v}{\dom}&\leq C_{\Pi_{\Vh;C}}h^{s-\frac{d}{2}}\normHp{v}{s}{\dom} \text{ for all } v\in H^s(\dom) \text{ for } s>d/2. \label{eq:approxLinfty}
\end{align}
Here, the positive constants $C_{\Pi_{\Vh;L^2}}$ and $C_{\Pi_{\Vh;C}}$ depend only on the shape regularity parameter of $\mathcal{T}_h$ and are independent of the mesh size $h$.

In contrast to other approaches where  the continuous random field $\lnrf:\Omega \times \dom \to \R$  and thus $\lnrfrv:\Omega \to \R^{\dom}$ are assumed to be completely known, we in the following only assume to have access to
\begin{align}\label{lnrfexpansionh}
\lnrf^{(h)}(\vec{\om},\vec{x})=\vec{\lnrfrv}^{(h)}(\vec{\om})\cdot \vec{\theta}^{(h)}(\vec{x}),
\end{align}
where $\vec{\lnrfrv}^{(h)}(\vec{\om})$ is a multivariate random variable defined by
\begin{align}
\label{observedrandomvector}
\vec{\lnrfrv}^{(h)}: \Omega \to \R^{\dimVh}, \quad \vec{\om} \mapsto \vec{\lnrfrv}^{(h)}(\vec{\om})=\left(\lnrfrv_{1}^{(h)}(\vec{\om}),\dots,\lnrfrv_{\dimVh}^{(h)}(\vec{\om}) \right)^{\transpose}.
\end{align}
Furthermore we assume that the coefficients, i.e. the random variables  $\{\lnrfrv^{(h)}_{j}(\vec{\om})\}_{j=1,\cdots,Q_h}$, are independent identically distributed with a common distribution $\distribution^{(h)}$.
As we work with centered Gaussian random fields, we have $\expect{\vec{\lnrfrv}^{(h)}}=\vec{0} \in \R^{\dimVh}$.

This resembles more closely the practical situation in real applications where in general the full random field $\lnrfrv$ is  never completely accessible but only given at a finite number of  sample points which stem from independent measurements in the first place.
The random vector $\vec{\lnrfrv}^{(h)}$ is indeed the information we practically have on the random input. 

Note that an alternative approach  would be to consider directly the \emph{Carleman operator} $R$ (see \eqref{carlemannkern}) for centered fields and  to approximate it by $\bar{R}^{(h)}$, i.e.,
\begin{align*}
R(\vec{x},\vec{x}^{\prime})&\approx R^{(h)}(\vec{x},\vec{x}^{\prime}):=\int_{\Omega} \lnrf^{(h)}(\vec{\om},\vec{x})\lnrf^{(h)}(\vec{\om},\vec{x}^{\prime})\, \mathrm{d}\prob(\vec{\om})\\&\approx \nsamples^{-1}\sum_{\nsamplesrun=1}^{\nsamples} \lnrf^{(h)}(\vec{\om}_{\nsamplesrun},\vec{x})\lnrf^{(h)}(\vec{\om}_{\nsamplesrun},\vec{x}^{\prime})=:\bar{R}^{(h)}(\vec{x},\vec{x}^{\prime}).
\end{align*}
This kernel $\bar{R}^{(h)}$ gives rise to an integral operator $f\mapsto \int_{\dom} f(\vec{x})\bar{R}^{(h)}(\vec{x},\cdot) d \vec{x}$ for which the stochastics is integrated out. Our aim is however to derive a kernel via sampling estimation instead. But this sampling is not involved in the above reference kernels ${R}$ and $\bar{R}^{(h)}$ at all.  Therefore, we consider the kernel $R^{(h)}$ as integral kernel on the discretized space $\Vh$. In particular, we will have a Mercer-type expansion of the form 
\begin{align*}
R^{(h)}(\vec{x},\vec{x}^{\prime})=\sum_{\kllevelrun=1}^{\dimVh} \lambda^{(h)}_{\ell}
\phi_{\kllevelrun}^{(h)}(\vec{x}) \phi_{\kllevelrun}^{(h)}(\vec{x}^{\prime}) \quad \text{with} \quad \phi^{(h)}_{\kllevelrun}(\vec{x})= \vec{\Phi}^{(h)}_{\kllevelrun} \cdot \vec{\theta}^{(h)}(\vec{x}).  
\end{align*}
This gives rise to a generalized matrix eigenvalue problem and we merely estimate the associated stiffness matrix from samples.

Now, due to \eqref{eqA22} from Assumption \ref{A:11}, we obtain the estimate
\begin{align} \label{errorhom}
\normc{\Pi_{\Vh} \lnrf(\vec{\om},\cdot) }{\dom}&\le \normc{\lnrf\left(\vec{\om} , \cdot\right) }{\dom}+	\normc{\lnrf\left(\vec{\om} , \cdot\right)-\Pi_{\Vh} \lnrf(\vec{\om},\cdot) }{\dom}\nonumber\\&\le\normc{\ \lnrf(\vec{\om},\cdot) }{\dom} +  C_{\Pi_{\Vh;C} }h^{s-\frac{d}{2}} \normHp{\lnrf\left(\vec{\om} , \cdot\right)}{s}{\dom} \quad \text{for }\prob\text{ almost all } \vec{\om}\in \Omega.
\end{align}
Hence, after squaring and integrating over $\Omega$, we have  
\begin{align*}
&\int_{\Omega}\normc{\Pi_{\Vh} \lnrf(\vec{\om},\cdot) }{\dom}^2\, \mathrm{d}\prob(\vec{\om}) \le \int_{\Omega}\left( \normc{\ \lnrf(\vec{\om},\cdot) }{\dom} +  C_{\Pi_{\Vh;C} }h^{s-\frac{d}{2}} \normHp{\lnrf\left(\vec{\om} , \cdot\right)}{s}{\dom} \right)^2\, \mathrm{d}\prob(\vec{\om}) \\
&\le 2  \int_{\Omega} \normc{\ \lnrf(\vec{\om},\cdot) }{\dom}^2 +  C_{\Pi_{\Vh;C} }^2 h^{2s-d} \normHp{\lnrf\left(\vec{\om} , \cdot\right)}{s}{\dom}^2\, \mathrm{d}\prob(\vec{\om})\le 2 \constC^2 +  2 C_{\Pi_{\Vh;C} }^2 h^{2s-d} \constHs^2 .
\end{align*}
Thus, with $\lnrf^{(h)}\left(\vec{\om} , \cdot\right):=\Pi_{\Vh} \lnrf(\vec{\om},\cdot)$, we obtain
\begin{align}\label{finitemomenth}
	\constC^{(h)} :=\left(\int_{\Omega} \normc{\lnrf^{(h)}\left(\vec{\om} , \cdot\right)}{\dom}^2  \, \mathrm{d}\prob(\vec{\om})\right)^{\frac{1}{2}}<\infty. 
\end{align}
Analogously by employing the $H^s$ norm instead of the $C(\dom)$ norm, we have
\begin{align*}
\constHs^{(h)} :=\left(\int_{\Omega} \normHp{\lnrf^{(h)}\left(\vec{\om} , \cdot\right)}{s}{\dom}^2  \, \mathrm{d}\prob(\vec{\om})\right)^{\frac{1}{2}} \le \left(\int_{\Omega} \normHp{\lnrf\left(\vec{\om} , \cdot\right)}{s}{\dom}^2  \, \mathrm{d}\prob(\vec{\om})\right)^{\frac{1}{2}} <\infty. 
\end{align*}
As in the continuous case \eqref{eq:cinf1} and \eqref{eq:cinf2} , this implies 
\begin{align}\label{finitemomentsh}
	\expect{\lnrf^{(h)}(\cdot, \vec{x})}\le\constC^{(h)}\quad \text{and}\quad \text{Cov}^{(h)}(\vec{x},\vec{x}^{\prime})\le 4(\constC^{(h)})^2 \quad \text{ for all }\vec{x},\vec{x}^{\prime} \in \dom.
\end{align}
Furthermore, with the representation \eqref{lnrfexpansionh} and since we deal with (centered) random vectors, the covariance is given as
\begin{align}
\label{truecovariance}
\vec{\Sigma}_{\vec{\lnrfrv}^{(h)}}:=\expect{\vec{\lnrfrv}^{(h)} \otimes \vec{\lnrfrv}^{(h)} }-\expect{\vec{\lnrfrv}^{(h)}}\otimes \expect{\vec{\lnrfrv}^{(h)}}=\expect{\vec{\lnrfrv}^{(h)} \otimes \vec{\lnrfrv}^{(h)} } \in \R^{\dimVh \times \dimVh}.
\end{align}
In contrast to the vast majority of the literature, where the covariance is assumed to be known, we only  assume to have given $\nsamples$ independent identically distributed discrete samples  $\vec{\lnrfrv}^{(h)}_{1}, \dots, \vec{\lnrfrv}^{(h)}_{\nsamples} $ of the random variable $\lnrf$. From these finitely many values we merely aim to derive an estimate of the true covariance function. To this end, we have to deal with the high-dimensional covariance matrix  $\vec{\Sigma}_{\vec{\lnrfrv}^{(h)}} \in \R^{\dimVh \times \dimVh}$.
Since $\dimVh \sim h^{-d}$, this involves the well-known curse of dimension.  Consequently the estimation of the covariance matrix gets more challenging the smaller the values of $h$ get. Therefore, we will assume that the covariance matrix $\vec{\Sigma}_{\vec{\lnrfrv}^{(h)}} \in \R^{\dimVh \times \dimVh}$ has a certain off-diagonal decay, that is $$\vec{\Sigma}_{\vec{\lnrfrv}^{(h)}} \in \decayclass_{\alpha}=\decayclass_{\alpha}(C_{\decayclass;1},C_{\decayclass;2}),$$ where 
\begin{align}\label{covarianceclass}
\decayclass_{\alpha}:=\left\{\vec{\Sigma} \in \R^{\dimVh \times \dimVh} \ : \ \max_{\dimVhrun=1,\dots,\dimVh} \sum_{\genfrac{}{}{0pt}{}{\dimVhrun^{\prime}=1}{\left|\dimVhrun^{\prime}-\dimVhrun \right|>c }}^{\dimVh}\left|\vec{\Sigma}_{\dimVhrun,\dimVhrun^{\prime}}\right| \le C_{\decayclass;1} c^{-\alpha}\text{ for all } 1\le c \le \dimVh \text{  and  } \lambda_{\max}(\vec{\Sigma})\le C_{\decayclass;2}\right\}.
\end{align}
Here, $\alpha$ modulates the speed of decay and $ C_{\decayclass;1},C_{\decayclass;2}$ are positive constants.
This specific class of matrices contains several relevant examples of discrete covariance functions, see \cite{bickel2008} and the overview \cite{cai2016}.
Moreover, the choice of this class of matrices will influence the approximation of continuous covariance functions via so-called tapering weights, see \eqref{taperingweights}. We will make use of the optimal rates derived in \cite{bickel2008}.
We furthermore will work with generalized eigenvalue problems. i.e., we do not compute the eigensystem of the covariance matrix $\vec{\Sigma}_{\lnrfrv^{(h)}}$ but instead that of a certain transformed matrix, see \eqref{genalizedeigenvalue3-estimator}. 
The eigensystem of this system gives rise to a computable Mercer-like expansion
\begin{align}\label{mercerh3}
R^{(h;M)}(\vec{x},\vec{x}^{\prime}) :=\sum_{\kllevelrun=1}^{\dimVh}\lambda^{(h;M)}_{\kllevelrun}
\phi_{\kllevelrun}^{(h;M)}(\vec{x})\phi_{\kllevelrun}^{(h;M)}(\vec{x}^{\prime}) \approx R^{(\kllevel;h;M)}(\vec{x},\vec{x}^{\prime}):= \sum_{\kllevelrun=1}^{\kllevel}\lambda^{(h;M)}_{\kllevelrun}
\phi_{\kllevelrun}^{(h;M)}(\vec{x})\phi_{\kllevelrun}^{(h;M)}(\vec{x}^{\prime}),
\end{align}
where the  $(h;M)$--notation indicates that we use an estimator which is based on the $\nsamples$  finite samples $\vec{\lnrfrv}^{(h)}_{1}, \dots, \vec{\lnrfrv}^{(h)}_{\nsamples} $ and makes use of certain appropriate tapering weights. Moreover $\kllevel$ indicates the finite-noise approximation. 

The main aim of this article to derive a bound on the approximation error $\norm{{R}-R^{(\kllevel;h;M)}}$ and of its expectation $ \expect{\norm{{R}-R^{(\kllevel;h;M)}}}$.
To this end, we  couple the discretization parameters $\kllevel, h, \nsamples $ and consider their limits $h\to 0,\kllevel \to \dimVh \to \infty, \nsamples  \to \infty$ to recover the true continuous covariance operator. 
For the approximation step with respect to $\kllevel$, we will make use of the optimal decay estimates for the eigenvalues of the true covariance operator as given \cite{griebel2017decay}. For the two  approximations steps involving  $h$ and $\nsamples$, we will derive corresponding estimates for the spatial discretization error and the sampling error.
Finally we properly combine the three approximation steps. This coupling is our central result, see Theorem \ref{finalthm}.
There we show that the bound
\begin{align*}
	\norm{{R}-R^{(\kllevel;h;\nsamples)}}_{L^{2}(\dom\times\dom)}&\lesssim  \kllevel^{-\frac{2s}{d}-\frac{1}{2}}  +  \left(  \kllevel^{\frac{1}{2}} + G(\kllevel) \right) \left\|\tildstiffnessmatrix-\tildestiffnessmatrixM \right\|_{2\to 2}  \  ,
\end{align*}
holds with high probability, where $G(\kllevel)$ is a function of the truncation parameter $\kllevel$ that depends on spectral properties of the true operator   associated to $R$ as given in \eqref{deltagl}. Moreover $\tildstiffnessmatrix$ denotes the finite element discretization of the true operator and $\tildestiffnessmatrixM$ denotes its sampling approximation, c.f. \eqref{genalizedeigenvalue3} and \eqref{tildestiffnessample}.

Since sampling involves randomness we are also interested in estimates in expectation.
Our second main result is Theorem \ref{thm:final2}. It states that 
\begin{align*}
\expect{\norm{{R}-R^{(\kllevel;h;\nsamples)}}_{L^{2}(\dom\times\dom)}} &\lesssim  \kllevel^{-\frac{2s}{d}-\frac{1}{2}}  + \left( \kllevel^{\frac{1}{2}}+ G(\kllevel) \right) \tilde{\rho}^{\frac{1}{2}}_{h}(\nsamples) \lambda_{\max}\left(\massmatrix\right) \nonumber\\
	&+  L^{\frac{1}{2}} h^{-d} \exp\left(- \nsamples  \rho_1 H(\kllevel)\lambda_{\max}^{-2} \left(\massmatrix\right)  \right),
\end{align*}
where $H(\kllevel)$  is again a function of the truncation parameter $\kllevel$ that depends on spectral properties \eqref{HL} of the true operator,  $\tilde{\rho}_{h}(\nsamples)$ measures the approximation quality of our employed (tapering) estimator, $\massmatrix$ is a classical finite element mass matrix and $\rho_1$ denotes a constant given in \eqref{rho1def} which is determined by the sub-Gaussian property of the involved random variables.
This permits to give sufficient conditions on the three discretization parameters to guarantee that an error below a prescribed accuracy $\varepsilon$ is achieved.

A novel contribution is the combination of our recent sharp bound (see \cite{griebel2017decay}) on spectra of covariance operators with optimal statistical covariance estimation methods, i.e., we derive an error analysis for the reconstruction of the continuous covariance operator from finite measurements. We believe that the presented framework is useful if only finite measured information on uncertain problem parameters is available and no a priori modeling assumption for the random field can be given. Such a situation is indeed often encountered in many practical problems in uncertainty quantification and machine learning.

The remainder of this paper is organized as follows: In Section \ref{sec:prelim} we introduce notation and review some basic facts. In Section \ref{sec:contkl} we recall the KL-expansion and sharp eigenvalue estimates. In Section \ref{sec:apprcov}, we present the reconstruction of the covariance matrix as stiffness matrix with respect to the finite element discretization. We focus on the discrete spatial approximation in Section \ref{sec:spatial} and on the statistical approximation in Section \ref{subsec:approxcovhM}. In Section \ref{sec:tails} we review some tail estimates for Gaussian random variables.
In Section \ref{subsec:approxcovhMfinal} we give bounds on the sampling covariance error. In Section \ref{sec:covoprec} we present our final error estimates for the covariance operator reconstruction and give sufficient conditions on the three discretization parameters to guarantee that an error below a prescribed accuracy $ \sim \varepsilon$ is achieved. We give some concluding remarks in Section \ref{sec:conclusion}.

\section{Notation and basic facts}\label{sec:prelim}
We start with some notation. Let two Banach spaces $V_1$ and $V_2$ be given. Then, $\mathcal{B}(V_1,V_2)$ stands for the Banach space composed of all continuous linear operators from $V_1$ to $V_2$ and $\mc{B}(V_1)$ stands for $\mc{B}(V_1, V_1)$. The set of non-negative integers is denoted by  $\mathbb{N}$. For any index $\alpha\in \mathbb{N}^d$, $|\alpha|$ is the
sum of its components. The letters $\kllevel$, $\nsamples$ and $h$ are reserved for the truncation number of the KL modes, the number of sampling points and the mesh size. We write $A\lesssim B$ if $A\leq cB$ for some absolute constant $c$ which is independent of $\kllevel$, $\nsamples$ and $h$, and we likewise write $A\gtrsim B$. Moreover, for any $s\in \mathbb{N}$, $1\leq p\leq \infty$, we follow \cite{Adam78} and define the Sobolev space $W^{s,p}(\dom)$ by	
$$W^{s,p}(\dom)=\{u\in L^{p}(\dom): D^{\alpha}u\in L^{p}(\dom) \text{ for } 0\leq|\alpha|\leq s\}.$$
It is equipped with the norm
\begin{equation*}
 \|u\|_{W^{s,p}(\dom)} = \left\{\begin{aligned}
 \Big(\sum\limits_{0\leq |\alpha|\leq s}\norm{D^{\alpha}u}_{L^p(\dom)}^{p}\Big)^{\frac{1}{p}}, & \text{ if }1\leq p<\infty,\\
 \max\limits_{0\leq |\alpha|\leq s}\norm{D^{\alpha}u}_{L^\infty(\dom)} ,& \text{ if } p=\infty.
\end{aligned}\right.
\end{equation*}
The space $W_{0}^{s,p}(D)$ is the closure of $C^{\infty}_{0}(\dom)$ in $W^{s,p}(D)$. Its dual space is $W^{-s,q}(\dom)$, with ${1}/{p}+{1}/{q}=1$. Also we use $H^{s}(\dom)=W^{s,p}(\dom)$ for $p=2$. Finally $(\cdot,\cdot)_{\dom}$ denotes the inner product in $L^2(\dom)$.

We now recall the classical Monte--Carlo algorithm.
To this end, we consider a probability space $(\Omega,\mathcal{F},\prob)$ and a given probability distribution $\distribution$. Let $X_{1},\cdots,X_{\nsamples}$ for $\nsamples \in \N$ be independent identically distributed real-valued random variables on $(\Omega,\mathcal{F},\prob)$ with the same associated probability distribution $\distribution$, i.e,
\begin{align*}
 \prob(X^{-1}_{\nsamplesrun}(B))=\distribution(B) \quad \text{for all } B\in \mathcal{B}(\mathbb{R}) \text{ and all } 1\leq \nsamplesrun \leq \nsamples.
\end{align*}
Here, $\mathcal{B}(\mathbb{R})$ denotes the Borel $\sigma$-algebra.
Each random variable $X_{\nsamplesrun}$ induces a sub $\sigma$-algebra, defined by
\begin{align*}
\sigma(X_{\nsamplesrun})=\left\{X^{-1}_{\nsamplesrun}(B) \ : \  B \in \mathcal{B}(\mathbb{R})\right\}.
\end{align*}
Note that the $\{\sigma(X_{\nsamplesrun})\}_{\nsamplesrun=1}^{\nsamples}$ are $\prob$-independent because the random variables $\{X_{\nsamplesrun}\}_{\nsamplesrun=1}^{\nsamples}$ are pairwise independent, i.e., for every finite set $\{\nsamplesrun_1,\dots \nsamplesrun_k\} \subset \N$ and every $B_{\nsamplesrun_{j}}\in \sigma(X_{\nsamplesrun_{j}}) $ for $1\le j \le k$, we have
\begin{align*}
\prob\left(\bigcap_{j=1}^{k}B_{\nsamplesrun_{j}} \right)=\prod_{j=1}^{k}\prob\left(B_{\nsamplesrun_{j}} \right).
\end{align*}
Let $f:\R \to \R$ be a Borel function. Then $Y_{\nsamplesrun}=f(X_{\nsamplesrun})$ is also a random variable, with its associated probability distribution being
\begin{align*}
 \prob((f\circ X_{\nsamplesrun})^{-1}(B))=\prob( X^{-1}_{\nsamplesrun}(f^{-1}(B)))=\distribution(f^{-1}(B)) \quad \text{for all } B\in \mathcal{B}(\mathbb{R}) \text{ and all } \nsamplesrun \in \N.
\end{align*}
Therefore, the composite random variables $\{Y_{\nsamplesrun}\}$ are also identically distributed, with their generated $\sigma$-algebras defined by
\begin{align*}
\sigma(Y_{\nsamplesrun})=\left\{(f\circ X_{\nsamplesrun})^{-1}(B) \ : \  B \in \mathcal{B}(\mathbb{R}) \right\}=\left\{X^{-1}_{\nsamplesrun}(f^{-1}(B)) \ : \  B \in \mathcal{B}(\mathbb{R})  \right\}\subset \sigma(X_{\nsamplesrun}).
\end{align*}
Consequently, the random variables $\{Y_\nsamplesrun\}$ are pairwise independent as a result of the independence of the $\{X_\nsamplesrun\}$.

Next we introduce a real-valued random variable $X$ equipped with the probability $\mu$. Let $Y$ be a random variable defined by $Y=f(X)$. Assume that the first two moments of $Y$ exist. Then its mean and variance are given as
\begin{align*}
\mathbb{E}[Y]&=\int_{\Omega} Y(\om) \, \mathrm{d}\prob(\om)=\int_{\Omega} (f\circ X)(\om) \, \mathrm{d}\prob(\om)=\int_{\R} f(x)  \, \mathrm{d}\distribution(x)<\infty,\\
\mathbb{V}[Y]&=\mathbb{E}[(Y-\mathbb{E}[Y])^2]=\int_{\R} \left(f(x)- \mathbb{E}[Y]\right)^2 \, \mathrm{d}\distribution(x)<\infty.
\end{align*}
By definition, $Y$ and $\{Y_\nsamplesrun\}$ have the same mean
\begin{align*}
\mathbb{E}[Y_\nsamplesrun]=\int_{\R} f(x)  \, \mathrm{d}\distribution(x)=\mathbb{E}[Y]
\end{align*}
and hence, by linearity, we get
\begin{align*}
\mathbb{E}\left[\frac{1}{\nsamples}\sum_{\nsamplesrun=1}^{\nsamples}Y_\nsamplesrun\right]=\mathbb{E}[Y].
\end{align*}
Furthermore, we obtain
\begin{align}
\mathbb{V}\left[\frac{1}{\nsamples}\sum_{\nsamplesrun=1}^{\nsamples}Y_\nsamplesrun\right]
&=\mathbb{E}\left[\left(\frac{1}{\nsamples}\sum_{\nsamplesrun=1}^{\nsamples}Y_\nsamplesrun-\mathbb{E}[Y]\right)^2\right]
=\text{Cov}\left(\frac{1}{\nsamples}\sum_{\nsamplesrun=1}^{\nsamples}Y_\nsamplesrun,\frac{1}{\nsamples}\sum_{\nsamplesrun=1}^{\nsamples}Y_\nsamplesrun\right)\nonumber\\
&=\frac{1}{\nsamples^2}\sum_{\nsamplesrun,\tilde{\nsamplesrun}=1}^{\nsamples}\text{Cov}\left(Y_\nsamplesrun,Y_{\tilde{\nsamplesrun}}\right)=\frac{1}{\nsamples^2}\sum_{\nsamplesrun=1}^{N}\text{Cov}\left(Y_\nsamplesrun,Y_\nsamplesrun\right)=\frac{1}{\nsamples}\mathbb{V}[Y],
\label{eq:xxx1}
\end{align}
where we used the definition of covariance and the fact that mixed terms vanish due to independence.
The idea of Monte--Carlo is to approximate the mean $\mathbb{E}[Y]$ by the estimator $\hat{\mathbb{E}}_{\nsamples}[Y]$ defined by
\begin{align*}
\hat{\mathbb{E}}_{\nsamples}[Y]:=\frac{1}{\nsamples} \sum_{\nsamplesrun=1}^{\nsamples} Y_{\nsamplesrun}=\frac{1}{\nsamples} \sum_{\nsamplesrun=1}^{\nsamples} f(X_{\nsamplesrun}).
\end{align*}
Then an application of \eqref{eq:xxx1} yields the corresponding RMSE error
\begin{align*}
\text{RMSE}&:=\sqrt{\mathbb{E}\left[\left(\hat{\mathbb{E}}_{\nsamples}[Y]-\mathbb{E}[Y]\right)^2\right]}
=\frac{1}{\sqrt{\nsamples}}\mathbb{V}\left[Y\right]^{\frac{1}{2}}.
\end{align*}
For any $\epsilon>0$, a combination of Chebyshev's inequality and the equality \eqref{eq:xxx1} shows
\begin{align*}
\mathbb{P}\left(\left\{\left|\hat{\mathbb{E}}_{\nsamples}[Y]-\mathbb{E}[Y] \right| >\epsilon\right\}\right) \le \epsilon^{-2}\mathbb{V}\left[\hat{\mathbb{E}}_{\nsamples}[Y]\right] =\frac{1}{\nsamples \epsilon^2}\mathbb{V}(Y).
\end{align*}
Taking $\epsilon:=\frac{1}{\sqrt{\nsamples \delta}}\left(\mathbb{V}(Y)\right)^{\frac{1}{2}}$ for some $\delta>0$, we obtain
\begin{align*}
\mathbb{P}\left(\left\{ \left|\hat{\mathbb{E}}_{\nsamples}[Y]-\mathbb{E}[Y] \right| > \frac{1}{\sqrt{\nsamples \delta}}\left(\mathbb{V}(Y)\right)^{\frac{1}{2}}\right\}\right) \le \delta.
\end{align*}

\section{Karhunen-Lo\`{e}ve expansion: Continuous level}\label{sec:contkl}
This section is concerned with the Karhunen-Lo\`{e}ve expansion of the centered Gaussian random field $\lnrf$. Let $\lebesgue(\vec{x})$ be the Lebesgue measure on the physical domain $\dom$. For the sake of simplicity, $L^{2}(\dom)$ and $L^2(\Omega)$ are short for $L^{2}(\dom;\mathrm{d}\lebesgue(\vec{x}))$ and $L^{2}(\Omega;\mathrm{d}\prob)$. We denote the associated integral operator $\mathcal{S}: L^{2}(\dom)\rightarrow  L^{2}(\Omega)$ by
\begin{align}\label{eq:S}
\mathcal{S}v=\int_{\dom}\lnrf (\vec{\om},\cdot)v\, \mathrm{d}\lebesgue,
\end{align}
whereas its adjoint operator $\mathcal{S}^{*}: L^{2}(\Omega)\rightarrow  L^{2}(\dom)$ is defined by
\begin{align}\label{eq:ajoint_S}
\mathcal{S}^{*}v=\int_{\Omega}\lnrf(\vec{\om},\cdot)v\, \mathrm{d}\prob(\vec{\om}).
\end{align}
Let $\mathcal{R}: L^2(\dom)\rightarrow L^2(\dom)$ be defined by  $\mathcal{R}:=\mathcal{S}^{*}\mathcal{S}$.
Then $\mathcal{R}$ is a non-negative self-adjoint Hilbert-Schmidt operator with kernel
$R\in L^2(\dom \times \dom)$ given by
\begin{align}\label{integralkernel}
R(\vec{x},\vec{x}^{\prime})=\int_{\Omega}\lnrf (\vec{\om},\vec{x}) \lnrf (\vec{\om},\vec{x}^{\prime})\, \mathrm{d}\prob(\vec{\om}) = \expec\left[\lnrf (\cdot,\vec{x}) \lnrf (\cdot,\vec{x}^{\prime}) \right]
\end{align}
Moreover, for any $v\in L^2(\dom)$, we have
\begin{align}\label{integralop}
\mathcal{R}v(\vec{x})=\int_{\dom}R(\vec{x},\vec{x}^{\prime})v(\vec{x}^{\prime}) \, \mathrm{d}\lebesgue(\vec{x}^{\prime}) = \int_{\dom}\int_\Omega \lnrf(\vec{\om},\vec{x})\lnrf(\vec{\om},\vec{x}^\prime) \, \mathrm{d}\prob(\vec{\om})\,v(\vec{x}^\prime) \mathrm{d}\lebesgue(\vec{x}^{\prime}).
\end{align}
The standard spectral theory for compact operators \cite{yosida78} implies that the operator $\mathcal{R}$ has
at most countably many discrete eigenvalues with zero being the only accumulation point and each non-zero
eigenvalue has only finite multiplicity. Let $\{\lambda_{\kllevelrun}\}_{\kllevelrun=1}^{\infty}$ be the sequence of eigenvalues (with multiplicity counted) associated to
$\mathcal{R}$, which are ordered non-increasingly, and let $\{\phi_{\kllevelrun}\}_{\kllevelrun=1}^\infty$ be the corresponding eigenfunctions that are orthonormal in $L^2(\dom)$.
Furthermore, for any $\lambda_{\kllevelrun}\neq 0$, define
\begin{equation}\label{eq:psi}
  \psi_{\kllevelrun}(\vec{\om})=\frac{1}{\sqrt{\lambda_{\kllevelrun}}}\int_{\dom}\lnrf(\vec{\om},\vec{x})
  \phi_{\kllevelrun} \, \mathrm{d}\lebesgue(\vec{x}).
\end{equation}
One can verify that the sequence $\{\psi_{\kllevelrun}\}_{{\kllevelrun}=1}^\infty$ is uncorrelated and
orthonormal in $L^2(\Omega)$ and therefore, $\{\psi_{\kllevelrun}\}_{{\kllevelrun}=1}^{\infty}$ are i.i.d normal random functions.
Note that the sequence $\{\lambda_{\kllevelrun}\}_{{\kllevelrun}=1}^{\infty}$ can be characterized by the so-called approximation numbers (cf. \cite[Section 2.3.1]{Pietsch:1986:ES:21700}). They are defined by
\begin{align}\label{eq:a_n}
\lambda_{\kllevelrun}=\inf\{\norm{\mathcal{R}-\mathcal{T}}_{\mathcal{B}(L^2(\dom))}:\mathcal{T}\in \mathfrak{F}(L^2(\dom)), {\text{rank}}(\mathcal{T})< \kllevelrun\},
\end{align}
where $\mathfrak{F}(L^2(\dom))$ denotes the set of the finite rank operators on $L^2(\dom)$.
This equivalency is frequently employed to estimate eigenvalues by constructing finite rank approximation operators.

The KL expansion of the bivariate function $\lnrf$ then refers to the expression
\begin{equation}\label{eq:KL}
\lnrf(\vec{\om},\vec{x})=\sum\limits_{\kllevelrun=1}^{\infty}\sqrt{\lambda_{\kllevelrun}}\phi_{\kllevelrun}(\vec{x})\psi_{\kllevelrun}(\vec{\om}),
\end{equation}
where the series converges in $L^{2}(\Omega) \otimes L^{2}(\dom )\cong L^2( \Omega\times D)$.

\subsection{$\kllevel$-term truncation in case of continuous Karhunen-Lo\`{e}ve expansion}
Now we will truncate the KL expansion and discuss the resulting error. The studies on the $\kllevel$-term KL approximation to random fields are extensive. For example, in \cite{Schwab:2006:KAR:1167051.1167057}, decay rates for the eigenvalues of covariance kernels possessing certain regularity were considered and  generalized fast multipole methods to solve the associated eigenvalue problems were studied. The robust computation of  eigenvalues for smooth covariance kernels was treated in \cite{Todor2006}. A comparison of $\kllevel$-term KL truncation and the sparse grids approximation was given in \cite{Griebel.Harbrecht:2017}.

The result of this section is based on our recent paper \cite{griebel2017decay}, which proves a sharp eigenvalue decay rate under a mild assumption on the regularity of the bivariate function $\lnrf $ in the physical domain. To this end, we recall Assumption \ref{A:11} concerning the
regularity of $\lnrf$. Its weaker variant \eqref{eqA11} states that $\lnrfrv\in L^{\infty}(\Omega, H^{s}(\dom))$ for some $s\ge 0$.
Its stronger variant \eqref{eqA22} implies that the kernel belongs to $H^{s}(\dom)\times H^{s}(\dom)$.
The following eigenvalue decay estimate \cite[Theorems 3.2, 3.3 and 3.4]{griebel2017decay} will be used repeatedly.
\begin{theorem}
\label{thm:truncationError}
Let \eqref{eqA22} from Assumption \ref{A:11} hold. Then, there holds
\begin{align}
\label{thm:truncationErrorLambda}
{{\lambda_{\kllevelrun}}}&\le C_{\ref{thm:truncationError}}\kllevelrun^{-\frac{2s}{d}-1} \quad \text{ for all $\kllevelrun\ge 1$  and }\\
\Big\|{\sum\limits_{\kllevelrun>\kllevel}\sqrt{\lambda_{\kllevelrun}}
\phi_{\kllevelrun}\psi_{\kllevelrun}}\Big\|_{L^2(\Omega\times \dom)}&\leq C_{\ref{thm:truncationError}}^{1/2}\sqrt{\frac{d}{2s}}(\kllevel+1)^{-\frac{s}{d}} \quad \text{ when $\kllevel$ is sufficiently large.}
\label{thm:truncationErrorTrunc}
\end{align}
\end{theorem}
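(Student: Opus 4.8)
The plan is to bound the eigenvalues $\lambda_\ell$ directly through a weighted spectral summation rather than through the approximation-number characterization \eqref{eq:a_n}, since the latter (via the widths of the embedding $H^{s}(\dom)\hookrightarrow L^{2}(\dom)$, which decay like $\ell^{-s/d}$) yields only the suboptimal rate $\lambda_\ell\lesssim \ell^{-2s/d}$ and misses the crucial extra factor $\ell^{-1}$. The starting point is that the eigenrelation $\mathcal{R}\phi_{\kllevelrun}=\lambda_{\kllevelrun}\phi_{\kllevelrun}$ together with the definition \eqref{eq:psi} gives $\mathcal{S}\phi_{\kllevelrun}=\sqrt{\lambda_{\kllevelrun}}\,\psi_{\kllevelrun}$ and hence $\mathcal{S}^{*}\psi_{\kllevelrun}=\sqrt{\lambda_{\kllevelrun}}\,\phi_{\kllevelrun}$; in particular each $\phi_{\kllevelrun}$ inherits the spatial regularity of $\lnrf$ and lies in $H^{s}(\dom)$. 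First I would verify that, under \eqref{eqA22}, the adjoint $\mathcal{S}^{*}$ is not merely bounded but Hilbert--Schmidt as an operator $L^{2}(\Omega)\to H^{s}(\dom)$, with squared Hilbert--Schmidt norm $\int_{\Omega}\normHp{\lnrf(\vec{\om},\cdot)}{s}{\dom}^{2}\,\mathrm{d}\prob(\vec{\om})=\constHs^{2}$; this is just Parseval together with Fubini applied to the kernel $\lnrf(\vec{\om},\vec{x})$.

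The key inequality then follows from Bessel's inequality for Hilbert--Schmidt operators: since $\{\psi_{\kllevelrun}\}$ is orthonormal in $L^{2}(\Omega)$ and $\sqrt{\lambda_{\kllevelrun}}\,\phi_{\kllevelrun}=\mathcal{S}^{*}\psi_{\kllevelrun}$, one obtains
\begin{align*}
\sum_{\kllevelrun}\lambda_{\kllevelrun}\,\normHp{\phi_{\kllevelrun}}{s}{\dom}^{2}
=\sum_{\kllevelrun}\normHp{\mathcal{S}^{*}\psi_{\kllevelrun}}{s}{\dom}^{2}
\le \left\|\mathcal{S}^{*}\right\|_{\mathrm{HS}(L^{2}(\Omega)\to H^{s}(\dom))}^{2}=\constHs^{2}.
\end{align*}
To convert this weighted summability into pointwise decay I would invoke a Weyl-type lower bound for $L^{2}$-orthonormal systems. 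Writing $\normHp{\cdot}{s}{\dom}^{2}=\langle A\cdot,\cdot\rangle_{L^{2}(\dom)}$ with $A$ an operator equivalent to $(I-\Delta)^{s}$, whose eigenvalues obey $\mu_{j}\asymp j^{2s/d}$ by Weyl's law, the Ky Fan (min--max) principle gives, for the $L^{2}$-orthonormal family $\{\phi_{\kllevelrun}\}_{\kllevelrun=1}^{n}$,
\begin{align*}
\sum_{\kllevelrun=1}^{n}\normHp{\phi_{\kllevelrun}}{s}{\dom}^{2}
=\sum_{\kllevelrun=1}^{n}\langle A\phi_{\kllevelrun},\phi_{\kllevelrun}\rangle
\ge \sum_{j=1}^{n}\mu_{j}\gtrsim n^{\,1+2s/d}.
\end{align*}

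Combining the two displays with the monotonicity \eqref{sortedeigenvals}, i.e. $\lambda_{\kllevelrun}\ge\lambda_{n}$ for $\kllevelrun\le n$, yields $\lambda_{n}\,n^{1+2s/d}\lesssim\sum_{\kllevelrun=1}^{n}\lambda_{\kllevelrun}\normHp{\phi_{\kllevelrun}}{s}{\dom}^{2}\le\constHs^{2}$, which is exactly \eqref{thm:truncationErrorLambda}. For the truncation estimate \eqref{thm:truncationErrorTrunc}, orthonormality of $\{\phi_{\kllevelrun}\}$ in $L^{2}(\dom)$ and of $\{\psi_{\kllevelrun}\}$ in $L^{2}(\Omega)$ collapses the left-hand side to a plain tail sum,
\begin{align*}
\Big\|\sum_{\kllevelrun>\kllevel}\sqrt{\lambda_{\kllevelrun}}\,\phi_{\kllevelrun}\psi_{\kllevelrun}\Big\|_{L^{2}(\Omega\times\dom)}^{2}=\sum_{\kllevelrun>\kllevel}\lambda_{\kllevelrun},
\end{align*}
and the decay just established, together with the integral comparison $\sum_{\kllevelrun>\kllevel}\kllevelrun^{-2s/d-1}\le\frac{d}{2s}(\kllevel+1)^{-2s/d}$ valid for $\kllevel$ sufficiently large, delivers the claimed bound with constant $C_{\ref{thm:truncationError}}^{1/2}\sqrt{d/(2s)}$. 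The main obstacle is the Weyl lower bound: one must make it rigorous on the bounded domain $\dom$ by fixing a concrete self-adjoint realization whose associated form generates an inner product equivalent to that of $H^{s}(\dom)$, confirming that its counting function satisfies Weyl's asymptotics, and applying the min--max inequality in the correct direction. It is precisely this ``density of states'' factor $n$ that produces the sharp additional decay $\ell^{-1}$ beyond the naive embedding rate, so the sharpness of the whole theorem rests on this step.
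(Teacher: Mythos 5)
Your argument is essentially correct, but it is not the paper's: the paper offers no proof of Theorem \ref{thm:truncationError} at all --- the result is imported from \cite{griebel2017decay}, and the shape of the constant $C_{\ref{thm:truncationError}}=\diam{\dom}^{2s}C_{\rm em}(d,s)\ext\normLH{\lnrf}{s}{\dom}^2$ reveals the method used there: extend $\lnrf(\vec{\om},\cdot)$ beyond $\dom$ (whence $\ext$ and $\diam{\dom}^{2s}$), build explicit finite-rank approximations of $\mathcal{S}$, bound the tails of the singular values in the Hilbert--Schmidt ($\ell^2$) sense, and convert that summability into a pointwise rate via embeddings of Lorentz sequence spaces (whence $C_{\rm em}(d,s)$). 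In particular, the sharp extra factor $\kllevelrun^{-1}$ is also obtained there through approximation numbers, only measured in Hilbert--Schmidt rather than operator norm; your opening claim that the route \eqref{eq:a_n} caps out at $\kllevelrun^{-2s/d}$ applies only to its operator-norm form. Your route is genuinely different and more self-contained: the Bessel step (which is in fact an identity, $\sum_{\kllevelrun}\lambda_{\kllevelrun}\normHp{\phi_{\kllevelrun}}{s}{\dom}^{2}=\normLH{\lnrf}{s}{\dom}^{2}$, and is precisely the identity the paper itself exploits in the proof of its eigenfunction-regularity lemma) combined with a Ky Fan/Weyl-type trace lower bound. The latter, which you rightly identify as the crux, is correct and can even be made elementary using only tools already quoted in the paper, with no need for a self-adjoint realization of $(I-\Delta)^{s}$ or its spectral asymptotics: take the $L^2$-orthogonal projection $\proj$ onto a finite element space with $\dimVh=m\approx n/2$; since $\sum_{\kllevelrun=1}^{n}\normL{\proj\phi_{\kllevelrun}}{\dom}^{2}\le\operatorname{tr}\proj=m$ for any $L^2$-orthonormal family, orthonormality and \eqref{eq:approxL2} give
\begin{align*}
n-m\;\le\;\sum_{\kllevelrun=1}^{n}\normL{\phi_{\kllevelrun}-\proj\phi_{\kllevelrun}}{\dom}^{2}\;\le\;\CI^{2}\,m^{-\frac{2s}{d}}\sum_{\kllevelrun=1}^{n}\normHp{\phi_{\kllevelrun}}{s}{\dom}^{2},
\end{align*}
i.e.\ $\sum_{\kllevelrun\le n}\normHp{\phi_{\kllevelrun}}{s}{\dom}^{2}\gtrsim n^{1+2s/d}$, which is your Weyl bound. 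What your proof buys: brevity, no extension operator or Lorentz-space machinery, and only $\lnrf\in L^{2}(\Omega,H^{s}(\dom))$ is needed. What the cited proof buys: fully explicit constants, and the tail estimate \eqref{thm:truncationErrorTrunc} directly, since $\sum_{\kllevelrun>\kllevel}\lambda_{\kllevelrun}\le\norm{(I-\proj)\mathcal{S}^{*}}_{\mathrm{HS}}^{2}\lesssim\kllevel^{-2s/d}$ without first passing through \eqref{thm:truncationErrorLambda}. One blemish at the end of your write-up: the integral comparison is stated backwards, since $\sum_{\kllevelrun>\kllevel}\kllevelrun^{-2s/d-1}$ is bounded \emph{above} by $\frac{d}{2s}\kllevel^{-2s/d}$ and \emph{below} by $\frac{d}{2s}(\kllevel+1)^{-2s/d}$, so the inequality you wrote cannot hold as stated; this is harmless because the ratio of the two bounds tends to $1$ as $\kllevel\to\infty$, which is exactly the slack covered by the ``sufficiently large $\kllevel$'' proviso in the theorem, but it should be phrased that way.
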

\noindent 
Here, we use the constant
$C_{\ref{thm:truncationError}}:=\diam{\dom}^{2s}C_{\rm em}(d,s)\ext\normLH{\lnrf}{s}{\dom} ^2$, where $C_{\rm{em}}(d,s)$ denotes an embedding constant between certain Lorentz sequence spaces, and $\ext$ is a constant which depends only on $\dom$ and $s$.

The next lemma provides a regularity result for the eigenfunctions $\{\phi_{\kllevelrun}\}_{\kllevelrun=1}^{\infty}$ which follows with operator interpolation from  \cite[Lemma 3.1 \& Remark after Assumption 3.1]{griebel2017decay}
\begin{lemma}[Regularity of the eigenfunctions $\{\phi_{\kllevelrun}\}_{\kllevelrun=1}^{\infty}$]
Let \eqref{eqA22} from Assumption \ref{A:11} be valid. Then for all $0\leq \beta \leq 1$, there holds
\begin{align}\label{eq:phi_theta}
\normHp{\phi_{\kllevelrun}}{\beta s}{\dom}\leq C(\dom,d, s)\kllevelrun^{\frac{\beta s}{d}} \quad \text{ when $\kllevelrun$ is sufficiently large}.
\end{align}
\end{lemma}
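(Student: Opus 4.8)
\emph{Proof proposal.} The plan is to prove the two endpoint estimates at $\beta=0$ and $\beta=1$ separately and then obtain the whole range $0\le\beta\le1$ by operator interpolation between $L^2(\dom)$ and $H^s(\dom)$. Only the interpolation step is genuinely new, since the endpoint $\beta=1$ is supplied by the cited reference.

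First I would dispose of the endpoint $\beta=0$. Since the eigenfunctions $\{\phi_{\kllevelrun}\}_{\kllevelrun=1}^{\infty}$ are orthonormal in $L^2(\dom)$, we have $\normL{\phi_{\kllevelrun}}{\dom}=1$, and hence \eqref{eq:phi_theta} holds at $\beta=0$ with any constant $C(\dom,d,s)\ge1$, because $\kllevelrun^{0}=1$. Next I would invoke the other endpoint $\beta=1$, namely
\begin{align*}
\normHp{\phi_{\kllevelrun}}{s}{\dom}\le C(\dom,d,s)\,\kllevelrun^{\frac{s}{d}}\quad\text{for $\kllevelrun$ sufficiently large.}
\end{align*}
This is precisely the $H^s$-regularity bound provided by \cite[Lemma 3.1 \& Remark after Assumption 3.1]{griebel2017decay}: under \eqref{eqA22} the kernel $R$ belongs to $H^s(\dom)\times H^s(\dom)$, this smoothness is transferred to each eigenfunction of $\mathcal{R}$ through the eigenrelation $\mathcal{R}\phi_{\kllevelrun}=\lambda_{\kllevelrun}\phi_{\kllevelrun}$, and in combination with the eigenvalue decay \eqref{thm:truncationErrorLambda} it yields the stated growth $\kllevelrun^{s/d}$. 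I would cite this estimate rather than reprove it.

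The remaining step is the interpolation argument for $0<\beta<1$. Here I would use that, on the domain $\dom$ (which is assumed throughout to enjoy the Sobolev extension and embedding properties underlying the finite element and Mercer theory), the fractional Sobolev scale is an interpolation scale,
\begin{align*}
H^{\beta s}(\dom)=\bigl[L^2(\dom),\,H^s(\dom)\bigr]_{\beta}\quad\text{with equivalent norms,}
\end{align*}
together with the accompanying interpolation inequality, which for each fixed index $\kllevelrun$ gives
\begin{align*}
\normHp{\phi_{\kllevelrun}}{\beta s}{\dom}\le C\,\normL{\phi_{\kllevelrun}}{\dom}^{1-\beta}\,\normHp{\phi_{\kllevelrun}}{s}{\dom}^{\beta}.
\end{align*}
Substituting $\normL{\phi_{\kllevelrun}}{\dom}=1$ and the $\beta=1$ bound then yields
\begin{align*}
\normHp{\phi_{\kllevelrun}}{\beta s}{\dom}\le C\bigl(\kllevelrun^{s/d}\bigr)^{\beta}=C\,\kllevelrun^{\frac{\beta s}{d}},
\end{align*}
which is exactly \eqref{eq:phi_theta}.

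I expect the main obstacle to be bookkeeping on the interpolation constant rather than any deep estimate. The inequality must hold with a constant independent of $\kllevelrun$ (this is automatic, being a pointwise statement in the function), and, in order to produce a single $C(\dom,d,s)$ valid for all $\beta$ simultaneously, one would like it to be uniform in $\beta\in(0,1)$. I would secure this either by using the complex interpolation method, for which the inequality holds with constant $1$, or by absorbing the real-interpolation constant---which stays bounded over the compact parameter range $[0,1]$---into $C(\dom,d,s)$. A secondary point to state carefully is the interpolation identity for $H^{\beta s}(\dom)$ itself, which is standard on extension domains but depends on the regularity of $\partial\dom$; I would make the hypotheses on $\dom$ explicit so that the asserted dependence of the constant on $\dom$, $d$ and $s$ is justified.
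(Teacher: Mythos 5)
Your proposal is correct and takes essentially the same approach as the paper: both proofs reduce the claim to the two endpoints $\beta=0$ (trivial by $L^2(\dom)$-orthonormality) and $\beta=1$, and then conclude with the interpolation inequality $\normHp{\phi_{\kllevelrun}}{\beta s}{\dom}\le C\,\normL{\phi_{\kllevelrun}}{\dom}^{1-\beta}\,\normHp{\phi_{\kllevelrun}}{s}{\dom}^{\beta}$. The only cosmetic difference is that you import the $\beta=1$ bound directly from the cited reference, whereas the paper re-derives it in-line from the identity $\norm{\lnrf}^{2}_{L^{2}(\Omega,H^{s}(\dom))}=\sum_{\kllevelrun}\lambda_{\kllevelrun}\normHp{\phi_{\kllevelrun}}{s}{\dom}^{2}$ together with the eigenvalue decay estimate---a step whose substance the lemma's own preamble attributes to that same reference anyway.
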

\begin{proof}
	Assumption \eqref{eqA22} states that $\lnrf \in L^{\infty}(\Omega, H^{s}(\dom)) \text{ for some } s\ge 0$. As $\int_{\Omega}1 \, \mathrm{d}\prob(\vec{\om})=1$, we deduce $\lnrf \in L^{2}(\Omega, H^{s}(\dom))$ and 
	\begin{align*}
		\left\| \lnrf \right\|^{2}_{ L^{2}(\Omega, H^{s}(\dom))}=\sum_{\kllevelrun=1}^{\infty} \lambda_{\kllevelrun} \left\| \phi_{\kllevelrun}\right\|^{2}_{H^{s}(\dom)} 
		\le 	 C_{\ref{thm:truncationError}} \sum_{\kllevelrun=1}^{\infty} \kllevelrun^{-1-\epsilon} \kllevelrun^{-\frac{2s}{d}+\epsilon} \left\| \phi_{\kllevelrun}\right\|^{2}_{H^{s}(\dom)}
		\end{align*}
		for all $\epsilon>0$. Considering the limit $\epsilon \to 0 ^{+}$, we infer that $ \left\| \phi_{\kllevelrun}\right\|_{H^{s}(\dom)}\lesssim \kllevelrun^{-\frac{s}{d}} $ for $\kllevelrun$ large. Since $ \left\| \phi_{\kllevelrun}\right\|_{H^{0}(\dom)}= \left\| \phi_{\kllevelrun}\right\|_{L^{2}(\dom)}=1$, the statement follows via an interpolation argument.
\end{proof}
\noindent  Here, $C(\dom,d, s)$ denotes a positive constant depending only on $\dom$, $d$ and $s$.
We also get a bound in the uniform norm by using Sobolev's embedding theorem for $s=\frac{d}{2}+\epsilon\ge \frac{d}{2}$, i.e., we have
\begin{align}\label{uniformphiest}
\normI{\phi_{\kllevelrun}}{\dom} \le \normHp{\phi_{\kllevelrun}}{s}{\dom} \le C(\dom,d, s)\kllevelrun^{\frac{s}{d}} =C(\dom,d)\kllevelrun^{\frac{1}{2}+\epsilon} \quad \text{ when $\kllevelrun$ is sufficiently large},
\end{align}
where we use $\epsilon>0$ for an arbitrary small number which may change from line to line.
The eigenfunctions $\{\phi_{\kllevelrun}\}_{\kllevelrun=1}^\infty$ are optimal in the sense that the mean-square error resulting from a finite-rank approximation of $\lnrf$ is minimized \cite{ghanem2003stochastic}. Thus, the eigenfunctions indeed minimize the truncation error in the $L^2$-sense, i.e.
\begin{align}\label{eq:opt_eigen}
\min\limits_{\substack{\{c_{\kllevelrun}(\vec{x})\}_{\kllevelrun=1}^{\kllevel}\subset L^2(\dom)\\\{c_{\kllevelrun}(\vec{x})\}_{\kllevelrun=1}^{\kllevel} \text{ orthonormal}}}\normL{\lnrf(\vec{\om},\vec{x})-\sum\limits_{\kllevelrun=1}^{\kllevel}\left(\int_{\dom}\lnrf(\vec{\om},\vec{x}^{\prime})c_{\kllevelrun}(\vec{x}^{\prime})\,\mathrm{d}\lebesgue(\vec{x}^{\prime})\right) c_{\kllevelrun}(\vec{x})}{\Omega\times \dom}=\sqrt{\sum\limits_{\kllevelrun>\kllevel}\lambda_{\kllevelrun}}.
\end{align}
\section{Approximation of the covariance matrix}\label{sec:apprcov}
\subsection{Spatial approximation}\label{sec:spatial}
In this section, we investigate the influence of the spatial discretization.
Recall that the random field $\lnrf^{(h)}$ defined in \eqref{lnrfexpansionh} is an approximation to the random field $\lnrf$. We assume for the moment to have access to its covariance function
\begin{align}\label{integralkernelh}
R^{(h)}(\vec{x},\vec{x}^{\prime}):=\int_{\Omega}\lnrf^{(h)} (\vec{\om},\vec{x}) \lnrf^{(h)} (\vec{\om},\vec{x}^{\prime})\, \mathrm{d}\prob(\vec{\om})=\expec \left[\lnrf^{(h)} (\cdot,\vec{x}) \lnrf^{(h)} (\cdot,\vec{x}^{\prime}) \right]
\end{align}
and the associated integral operator
\begin{align*}
\mc{R}^{(h)}f(\vec{x}):=\int_{\dom} f(\vec{x}^{\prime})R^{(h)}(\vec{x},\vec{x}^{\prime}) \, \mathrm{d}\vec{x}^{\prime}.
\end{align*}
Then we have the following result:
\begin{lemma}[Semi-discrete spatial approximation error estimate]
Let $R\in L^2(\dom \times \dom)$ be defined as in \eqref{integralkernel} and let its numerical approximation $R_h$ be defined as in \eqref{integralkernelh}. Then it holds
\begin{align}\label{integralkernelerror}
\normc{R^{(h)} -R}{\dom \times \dom}
\le C_{\Pi_{\Vh;L^2}}\constC \constHs h^{s-\frac{d}{2}}.
\end{align}
Furthermore, for the associated integral operators  $\mathcal{R}$ and $\mathcal{R}_h$, it holds 
\begin{align}\label{integraloperatorerror}
\norm{\mc{R}-\mc{R}^{(h)}}_{\mc{B}(L^2(\dom))}\le C_{\Vh;L^2}(\constC\constHs+\constC^{(h)}\constHs^{(h)}) h^{s}.
\end{align}
\end{lemma}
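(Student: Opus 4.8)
The plan is to derive both estimates from the same algebraic device, the telescoping identity for the product difference of the two fields. Writing $\lnrf^{(h)}=\Pi_{\Vh}\lnrf$, I would first record the pointwise identity
\begin{align*}
\lnrf^{(h)}(\vec{\om},\vec{x})\lnrf^{(h)}(\vec{\om},\vec{x}^{\prime})-\lnrf(\vec{\om},\vec{x})\lnrf(\vec{\om},\vec{x}^{\prime})
&=\lnrf^{(h)}(\vec{\om},\vec{x})\bigl(\lnrf^{(h)}(\vec{\om},\vec{x}^{\prime})-\lnrf(\vec{\om},\vec{x}^{\prime})\bigr)\\
&\quad+\bigl(\lnrf^{(h)}(\vec{\om},\vec{x})-\lnrf(\vec{\om},\vec{x})\bigr)\lnrf(\vec{\om},\vec{x}^{\prime}),
\end{align*}
so that after integration over $\Omega$ the kernel difference $R^{(h)}-R$ (cf. \eqref{integralkernel} and \eqref{integralkernelh}) splits into two structurally symmetric terms, each carrying exactly one projection-error factor $\lnrf^{(h)}-\lnrf$ and one bounded factor.

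For the uniform bound \eqref{integralkernelerror} I would take the supremum over $(\vec{x},\vec{x}^{\prime})\in\dom\times\dom$ inside the $\Omega$-integral and apply the Cauchy--Schwarz inequality in $\Omega$ to each of the two terms. The three resulting factors are then recognised: $\bigl(\int_\Omega\normc{\lnrf(\vec{\om},\cdot)}{\dom}^2\,\mathrm{d}\prob\bigr)^{1/2}=\constC$, the analogous quantity for $\lnrf^{(h)}$ equal to $\constC^{(h)}$, and the projection-error factor, which I would control by the sup-norm estimate \eqref{eq:approxLinfty} (whose constant is the sup-norm projection constant $C_{\Pi_{\Vh;C}}$) to obtain $\bigl(\int_\Omega\normc{\lnrf^{(h)}(\vec{\om},\cdot)-\lnrf(\vec{\om},\cdot)}{\dom}^2\,\mathrm{d}\prob\bigr)^{1/2}\le C_{\Pi_{\Vh;C}}h^{s-\frac{d}{2}}\constHs$. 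Since \eqref{eqA22} gives $s>\frac{d}{2}$, the factor $\constC^{(h)}$ stays bounded (indeed $\constC^{(h)}\lesssim\constC$ by \eqref{errorhom} for small $h$), and collecting the constants yields the claimed rate $h^{s-\frac{d}{2}}$.

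For the operator estimate \eqref{integraloperatorerror} the crucial observation is that the $L^2(\dom)$ operator norm is dominated by the Hilbert--Schmidt norm, which equals $\normL{R-R^{(h)}}{\dom\times\dom}$; this lets me measure the kernel error in $L^2$ rather than in $C$ and hence recover the sharper rate $h^{s}$. Starting again from the telescoping splitting, I would apply Cauchy--Schwarz in $\Omega$ \emph{pointwise} in $(\vec{x},\vec{x}^{\prime})$; the key point is that the resulting bound factorises as a product $A(\vec{x})\,B(\vec{x}^{\prime})$, so that by Fubini the square of its $L^2(\dom\times\dom)$-norm separates into $\bigl(\int_\dom A^2\bigr)\bigl(\int_\dom B^2\bigr)$. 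One factor is $\int_\Omega\normL{\lnrf^{(h)}(\vec{\om},\cdot)}{\dom}^2\,\mathrm{d}\prob$, controlled via the $L^2$-contractivity of $\Pi_{\Vh}$ together with $\constC^{(h)}$; the other is $\int_\Omega\normL{\lnrf^{(h)}(\vec{\om},\cdot)-\lnrf(\vec{\om},\cdot)}{\dom}^2\,\mathrm{d}\prob\le C_{\Pi_{\Vh;L^2}}^2h^{2s}\constHs^2$ by the $L^2$ approximation estimate \eqref{eq:approxL2}. Treating the symmetric second term identically and summing produces \eqref{integraloperatorerror} with the combination $\constC\constHs+\constC^{(h)}\constHs^{(h)}$.

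The main subtlety, and the only place where care is genuinely required, is the order of the two norms in the operator estimate. Applying Cauchy--Schwarz in $\Omega$ \emph{before} integrating in space is exactly what produces the clean tensor-product factorisation $A(\vec{x})B(\vec{x}^{\prime})$; integrating in space first, or bounding the kernel pointwise, would force the use of the sup-norm estimate \eqref{eq:approxLinfty} and degrade the rate from $h^{s}$ to $h^{s-\frac{d}{2}}$. Verifying that $\Pi_{\Vh}$ is a contraction in $L^2(\dom)$, so that the $\lnrf^{(h)}$-factor is controlled without an extra $h$-dependent blow-up, is the remaining ingredient; everything else is routine bookkeeping of the constants $\constC,\constC^{(h)},\constHs,\constHs^{(h)}$, all finite by \eqref{finitemoment}, \eqref{finitemomenth} and the subsequent estimates.
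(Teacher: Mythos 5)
Your proposal is correct and follows essentially the same route as the paper: the telescoping identity $ab-a_hb_h=(a-a_h)b+a_h(b-b_h)$, the sup-norm projection estimate combined with Cauchy--Schwarz in $\Omega$ for the uniform bound \eqref{integralkernelerror}, and the domination of the operator norm by the $L^{2}(\dom\times\dom)$ kernel norm together with the $L^2$ projection estimate \eqref{eq:approxL2} for the rate-$h^{s}$ bound \eqref{integraloperatorerror}. The only cosmetic difference is in the second part, where you apply Cauchy--Schwarz in $\Omega$ pointwise and then use Fubini, while the paper first pulls the expectation out of the spatial $L^2$ norm via Jensen's inequality and then factorizes pathwise in $\vec{\om}$ --- both orderings produce the identical final bound.
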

\begin{proof}
Using $ab-a_h b_h=(a-a_h)b+a_h(b-b_h)$, we obtain
\begin{align*}
\normc{R^{(h)} -R}{\dom \times \dom}&\le \sup_{\vec{x},\vec{x}^{\prime} \in \dom} \left|\int_{\Omega}\left(\lnrf^{(h)} (\vec{\om},\vec{x}) -\lnrf (\vec{\om},\vec{x})\right)\lnrf^{(h)} (\vec{\om},\vec{x}^{\prime})\, \mathrm{d}\prob(\vec{\om}) \right|\\
&+ \sup_{\vec{x},\vec{x}^{\prime} \in \dom}\left|\int_{\Omega}\left(\lnrf^{(h)} (\vec{\om},\vec{x}^{\prime}) -\lnrf (\vec{\om},\vec{x}^{\prime})\right)  \lnrf (\vec{\om},\vec{x})\, \mathrm{d}\prob(\vec{\om}) \right|\\
& \le  \int_{\Omega}\normc{\lnrf^{(h)} (\vec{\om},\cdot) -\lnrf (\vec{\om},\cdot)}{\dom} \normc{\lnrf^{(h)} (\vec{\om},\cdot)}{\dom}\, \mathrm{d}\prob(\vec{\om}) \\
&+ \int_{\Omega}\normc{\lnrf^{(h)} (\vec{\om},\cdot) -\lnrf (\vec{\om},\cdot)}{\dom}  \normc{\lnrf (\vec{\om},\cdot)}{\dom}\, \mathrm{d}\prob(\vec{\om}) \\
& \le C_{\Pi_{\Vh;L^2}}h^{s-\frac{d}{2}} \int_{\Omega}\normHp{\lnrf (\vec{\om},\cdot)}{s}{\dom} \normc{\lnrf^{(h)} (\vec{\om},\cdot)}{\dom}\, \mathrm{d}\prob(\vec{\om}) \\
&+ C_{\Pi_{\Vh;L^2}}h^{s-\frac{d}{2}}\int_{\Omega}\normHp{\lnrf (\vec{\om},\cdot)}{s}{\dom}  \normc{\lnrf (\vec{\om},\cdot)}{\dom}\, \mathrm{d}\prob(\vec{\om}),
\end{align*}
where we used \eqref{errorhom} in the last inequality. This, together with the assumption \eqref{finitemoment}, leads to
\begin{align*}
\normc{R^{(h)} -R}{\dom \times \dom}&\le C_{\Pi_{\Vh;L^2}}h^{s-\frac{d}{2}} \left(\int_{\Omega}\normHp{\lnrf (\vec{\om},\cdot)}{s}{\dom}^2 \, \mathrm{d}\prob(\vec{\om}) \right)^{\frac{1}{2}} \left(\int_{\Omega}\normc{\lnrf (\vec{\om},\cdot)}{\dom}^2 \, \mathrm{d}\prob(\vec{\om}) \right)^{\frac{1}{2}} \nonumber \\
&\le C_{\Pi_{\Vh;L^2}} \constHs \constC h^{s-\frac{d}{2}} .
\end{align*}
This proves the first assertion \eqref{integralkernelerror}.
Now note  that
\begin{align*}
\norm{\mc{R}-\mc{R}^{(h)}}_{\mc{B}(L^2(\dom))}:=\sup_{\genfrac{}{}{0pt}{}{v \in L^2(\dom)}{\normL{v}{\dom}=1}}\normL{\mathcal{R}v-\mathcal{R}^{(h)}v}{\dom}
\le \normL{R-R^{(h)}}{\dom\times\dom}.
\end{align*}
Using \eqref{integralkernel} and \eqref{integralkernelh}, we observe 
\begin{align*}
&\normL{R-R^{(h)}}{\dom\times\dom}^2=\int_{\dom} \int_{\dom} \left( R(\vec{x},\vec{x}^{\prime})-R^{(h)}( \vec{x},\vec{x}^{\prime}) \right)^2 \, \mathrm{d}( \lebesgue \otimes \lebesgue)(\vec{x},\vec{x}^{\prime})\\
&=\int_{\dom} \int_{\dom} \left(\int_{\Omega} \kappa(\vec{\om},\vec{x}) \kappa(\vec{\om},\vec{x}^{\prime})-\kappa^{(h)}(\vec{\om},\vec{x}) \kappa^{(h)}(\vec{\om},\vec{x}^{\prime})  \, \mathrm{d}\prob(\vec{\om})  \right)^2 \, \mathrm{d}( \lebesgue \otimes \lebesgue)(\vec{x},\vec{x}^{\prime})\\
&=\int_{\dom} \int_{\dom}\left(  \expec\left[\kappa(\cdot,\vec{x}) \kappa(\cdot,\vec{x}^{\prime})
-\kappa^{(h)}(\cdot,\vec{x}) \kappa^{(h)}(\cdot,\vec{x}^{\prime}) \right]
  \right)^2 \, \mathrm{d}( \lebesgue \otimes \lebesgue)(\vec{x},\vec{x}^{\prime}).
\end{align*}
Thus, we obtain
\begin{align*}
\normL{R-R^{(h)}}{\dom\times\dom}=\normL{\expec\left[\kappa \kappa
-\kappa^{(h)} \kappa^{(h)} \right] }{\dom\times\dom}.
\end{align*}
We get by an application of the Jensen's inequality, i.e., $f(\expec{X}) \le \expec{f(X)}$ with the convex function $f(X)=\normL{X}{\dom\times\dom}$, the bound 
\begin{align*}
\normL{{R}-{R}^{(h)}}{\dom\times\dom}
&\leq \expec\left[\normL{{\kappa \kappa
-\kappa^{(h)}\kappa^{(h)}}}
{\dom\times\dom}\right]\\
&\leq \expec\left[\normL{(\kappa-
\kappa^{(h)})\kappa}
{\dom\times\dom}+\normL{\kappa^{(h)} (\kappa-
\kappa^{(h)})}
{\dom\times\dom}\right]\\
&\le C_{\Vh;L^2}(\constC \constHs+\constC^{(h)} \constHs^{(h)}) h^{s}
\end{align*}
where we used \eqref{eq:approxL2} in the last step.
This proves the second assertion \eqref{integraloperatorerror}.
\end{proof}
We are interested in solving the eigenvalue problem for the operator $\mc{R}^{(h)}$, i.e., we consider finding $(\phi_{\kllevelrun}^{(h)},\lambda^{(h)}_{\kllevelrun})\in \Vh \times \R$ such that
\begin{align}\label{genalizedeigenvalue}
\left(\mc{R}^{(h)}\phi_{\kllevelrun}^{(h)},v^{(h)}\right)_{\dom}=\lambda^{(h)}_{\kllevelrun} \left(\phi_{\kllevelrun}^{(h)},v^{(h)}\right)_{\dom} \quad \text{for all  } v^{(h)}\in \Vh=\spn\left\{\theta_{\dimVhrun}^{(h)} \ : \  1 \le \dimVhrun \le \dimVh \right\}.
\end{align}
Using $\phi_{\kllevelrun}^{(h)}:= \vec{\Phi}^{(h)}_{\kllevelrun}\cdot \vec{\theta}^{(h)}=\sum_{\dimVhrun=1}^{\dimVh} \phi_{\kllevelrun;\dimVhrun} \theta_{\dimVhrun}^{(h)}  \in \Vh$,
we derive for fixed but arbitrary $ v^{(h)} =  \vec{V}^{(h)}\cdot \vec{\theta}^{(h)} \in \Vh$, i.,e, for fixed but arbitrary $ \vec{V}^{(h)}\in \R^{\dimVh}$, the identity
\begin{align*}
&\left(\mc{R}^{(h)}\phi_{\kllevelrun}^{(h)},v^{(h)}\right)_{\dom}= \int_{\dom} \int_{\dom} \int_{\Omega} \lnrf^{(h)}(\vec{\om},\vec{x}) \lnrf^{(h)}(\vec{\om},\vec{x}^{\prime}) \, \mathrm{d}\prob(\vec{\om}) \phi_{\kllevelrun}^{(h)}(\vec{x}) \, \mathrm{d}\lebesgue(\vec{x}) v^{(h)}(\vec{x}^{\prime})  \, \mathrm{d}\lebesgue(\vec{x}^{\prime}) \\
&= \int_{\dom} \int_{\dom} \int_{\Omega}\vec{\lnrfrv}^{(h)}(\vec{\om})\cdot \vec{\theta}(\vec{x})   \vec{\lnrfrv}^{(h)}(\vec{\om})\cdot \vec{\theta}(\vec{x}^{\prime}) \, \mathrm{d}\prob(\vec{\om}) \vec{\Phi}_{\kllevelrun}^{(h)} \cdot \vec{\theta}(\vec{x}) \, \mathrm{d}\lebesgue(\vec{x}) \vec{V}^{(h)}\cdot \vec{\theta}(\vec{x}^{\prime})  \, \mathrm{d}\lebesgue(\vec{x}^{\prime}) \\
&=\int_{\Omega} \left( \int_{\dom}  \vec{\lnrfrv}^{(h)}(\vec{\om})\cdot \vec{\theta}(\vec{x})  \vec{\Phi}_{\kllevelrun}^{(h)} \cdot \vec{\theta}(\vec{x})  \, \mathrm{d}\lebesgue(\vec{x})\right) \left( \int_{\dom}  \vec{\lnrfrv}^{(h)}(\vec{\om})\cdot \vec{\theta}(\vec{x}^{\prime})   \vec{V}^{(h)}\cdot \vec{\theta}(\vec{x}^{\prime}) \, \mathrm{d}\lebesgue(\vec{x}^{\prime}) \right)  \, \mathrm{d}\prob(\vec{\om}) 
\\&=\int_{\Omega}    \vec{\lnrfrv}^{(h)}(\vec{\om}) \cdot   \massmatrix   \vec{\Phi}_{\kllevelrun}^{(h)}    \vec{\lnrfrv}^{(h)}(\vec{\om}) \cdot   \massmatrix   \vec{V}^{(h)} \, \mathrm{d}\prob(\vec{\om}) 
\\&= \int_{\Omega}  \left( \vec{V}^{(h)}\right)^{\transpose} \massmatrix      \vec{\lnrfrv}^{(h)}(\vec{\om}) \otimes    \vec{\lnrfrv}^{(h)}(\vec{\om})  \massmatrix \vec{\Phi}_{\kllevelrun}^{(h)}  \, \mathrm{d}\prob(\vec{\om}) =  \left( \vec{V}^{(h)}\right)^{\transpose} \massmatrix  \vec{\Sigma}_{  \vec{\lnrfrv}^{(h)}}\massmatrix \vec{\Phi}_{\kllevelrun}^{(h)}. 
 \end{align*}
Here $\massmatrix$ and $\stiffnessmatrix$ are the mass matrix and the stiffness matrix defined by
\begin{align}\label{stiffnessmatrix2}
\massmatrix=(\vec{\theta}^{(h)},{\vec{\theta}^{(h)}}^{\transpose})_{\dom} \quad \text{and} \quad
\stiffnessmatrix=\massmatrix\vec{\Sigma}_{\vec{\lnrfrv}^{(h)}}\massmatrix.  
\end{align}
Note that the symmetry of the mass matrix $\massmatrix$ implies the symmetry of $\stiffnessmatrix$. 

Thus, the  corresponding matrix form of the generalized eigenvalue problem \eqref{genalizedeigenvalue} is to seek the eigenpair $(\vec{\Phi}^{(h)}_{\kllevelrun} ,\lambda^{(h)}_{\kllevelrun})\in \mathbb{R}^{\dimVh}\times \mathbb{R}$ for $\kllevelrun=1,\cdots,\dimVh$, satisfying
\begin{align}\label{genalizedeigenvalue2}
\stiffnessmatrix \vec{\Phi}^{(h)}_{\kllevelrun} =\lambda^{(h)}_{\kllevelrun} \massmatrix \vec{\Phi}^{(h)}_{\kllevelrun}.
\end{align}
This is a generalized eigenvalue problem. We now transform it to a conventional eigenvalue problem. To this end, observe that the mass matrix $\massmatrix$ is symmetric positive-definite, and thus possesses a Cholesky factorization
\begin{align*}
\massmatrix=:\CholStiffnessL\left(\CholStiffnessL\right)^{\transpose} \in \R^{\dimVh \times \dimVh}.
\end{align*}
Now let $\tildstiffnessmatrix$ be a symmetric positive-definite matrix defined by
\begin{align*}
\tildstiffnessmatrix  :=\left(\CholStiffnessL\right)^{\transpose}\vec{\Sigma}_{\vec{\lnrfrv}^{(h)}}\CholStiffnessL.
\end{align*}
Then we can solve for $(\widetilde{\vec{\Phi}}^{(h)}_{\kllevelrun},\lambda^{(h)}_{\kllevelrun}) \in \R ^{\dimVh} \times \R$ for $\kllevelrun=1,\cdots,\dimVh$, satisfying
\begin{align}\label{genalizedeigenvalue3}
\tildstiffnessmatrix \widetilde{\vec{\Phi}}^{(h)}_{\kllevelrun} 
= \lambda^{(h)}_{\kllevelrun} \widetilde{\vec{\Phi}}^{(h)}_{\kllevelrun}.
\end{align}
Suppose now that $(\widetilde{\vec{\Phi}}^{(h)}_{\kllevelrun},\lambda^{(h)}_{\kllevelrun}) \in \R^{\dimVh} \times \R$ is an eigenpair of \eqref{genalizedeigenvalue3}.
Using 
\begin{align*}
\stiffnessmatrix=\massmatrix\vec{\Sigma}_{\vec{\lnrfrv}^{(h)}}\massmatrix=\CholStiffnessL\tildstiffnessmatrix \left(\CholStiffnessL\right)^{\transpose}
\end{align*}
 we obtain
\begin{align*}
\stiffnessmatrix \left(\CholStiffnessL\right)^{-\transpose}\widetilde{\vec{\Phi}}^{(h)}_{\kllevelrun} = \CholStiffnessL \tildstiffnessmatrix \widetilde{\vec{\Phi}}^{(h)}_{\kllevelrun} = \lambda^{(h)}\CholStiffnessL \widetilde{\vec{\Phi}}^{(h)}_{\kllevelrun}= \lambda^{(h)} \massmatrix  \left(\CholStiffnessL\right)^{-\transpose}\widetilde{\vec{\Phi}}^{(h)}_{\kllevelrun}.
 \end{align*}
 Thus, we have shown the equivalence 
\begin{align*}
&(\widetilde{\vec{\Phi}}^{(h)}_{\kllevelrun},\lambda^{(h)}_{\kllevelrun}) \in \R^{\dimVh} \times \R
\text{ is an eigenpair of \eqref{genalizedeigenvalue3}}\Longleftrightarrow\\&
\left( \vec{\Phi}^{(h)}_{\kllevelrun}:=\left(\CholStiffnessL\right)^{-\transpose}\widetilde{\vec{\Phi}}^{(h)}_{\kllevelrun},
\lambda^{(h)}_{\kllevelrun}\right) \in \R^{\dimVh} \times \R
\text{ is an eigenpair of \eqref{genalizedeigenvalue2}.}
\end{align*}
Note that,  with $\phi_{\kllevelrun}^{(h)}= \vec{\Phi}^{(h)}_{\kllevelrun}\cdot \vec{\theta}^{(h)}$, we obtain a Mercer-like representation 
\begin{align}\label{mercerh}
R^{(h)}(\vec{x},\vec{x}^{\prime})&= \sum_{\dimVhrun=1}^{\dimVh}\lambda^{(h)}_{\dimVhrun}\phi_{\dimVhrun}^{(h)}(\vec{x})\phi_{\dimVhrun}^{(h)}(\vec{x}^{\prime}) \approx R^{(\kllevel;h)}(\vec{x},\vec{x}^{\prime}):= \sum_{\dimVhrun=1}^{\kllevel}\lambda^{(h)}_{\dimVhrun}\phi_{\dimVhrun}^{(h)}(\vec{x})\phi_{\dimVhrun}^{(h)}(\vec{x}^{\prime}),
\end{align}
for $1\le \kllevel \le \dimVh $.

We now give a classical error analysis by means of the
approximation theory of conforming finite element methods.
To this end, consider the error operator
\begin{align}\label{eq:notationErr}
\mc{E}^{(h)}=\mc{R}-\mc{R}^{(h)}:L^{2}(\dom) \to L^{2}(\dom),
\end{align}
which is a self-adjoint
operator on $L^2(\dom)$. We have the following error representation.
\begin{lemma}\label{lemma:errOper}
The error operator $\mc{E}^{(h)}$ has the property
\[
\left(\mc{E}^{(h)} v,v\right)_{\dom}=\left(v,(I-\proj)\mc{R}(I+\proj)v\right)_{\dom}\quad \text{ for all } \quad v\in L^2(D),
\]
where we use the operator $\proj:L^{2}(\dom) \to \Vh$ from \eqref{eq:approxLinfty}.
\end{lemma}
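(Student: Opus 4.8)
The plan is to reduce everything to the single structural identity $\mc{R}^{(h)}=\proj\mc{R}\proj$, and then to read off the claimed formula by a short bilinear manipulation that uses only the self-adjointness of $\proj$ and of $\mc{R}$. So the first and decisive step is to recognize $\mc{R}^{(h)}$ as a two-sided $L^2$-projection of $\mc{R}$.

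To establish this, I would introduce the discrete analogue $\mathcal{S}^{(h)}\colon L^2(\dom)\to L^2(\Omega)$ of the operator $\mathcal{S}$ from \eqref{eq:S}, defined by $\mathcal{S}^{(h)}v=\int_{\dom}\lnrf^{(h)}(\vec{\om},\cdot)\,v\,\mathrm{d}\lebesgue$, so that exactly as $\mc{R}=\mathcal{S}^{*}\mathcal{S}$ one obtains $\mc{R}^{(h)}=(\mathcal{S}^{(h)})^{*}\mathcal{S}^{(h)}$ from the kernel \eqref{integralkernelh}. Recalling that $\lnrf^{(h)}(\vec{\om},\cdot)=\proj\lnrf(\vec{\om},\cdot)$ and that $\proj$ is the self-adjoint $L^2(\dom)$-projection, for every $v\in L^2(\dom)$ and $\prob$-almost every $\vec{\om}$ I have $\int_{\dom}(\proj\lnrf)(\vec{\om},\vec{x})\,v(\vec{x})\,\mathrm{d}\lebesgue(\vec{x})=(\proj\lnrf(\vec{\om},\cdot),v)_{\dom}=(\lnrf(\vec{\om},\cdot),\proj v)_{\dom}$, which says precisely that $\mathcal{S}^{(h)}=\mathcal{S}\proj$. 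Taking adjoints and using $\proj^{*}=\proj$ gives $\mc{R}^{(h)}=(\mathcal{S}\proj)^{*}(\mathcal{S}\proj)=\proj\mathcal{S}^{*}\mathcal{S}\proj=\proj\mc{R}\proj$.

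For the second step, writing $P:=\proj$ for brevity, I compute the left-hand side using $P^{*}=P$ as $(\mc{E}^{(h)}v,v)_{\dom}=(\mc{R}v,v)_{\dom}-(P\mc{R}Pv,v)_{\dom}=(\mc{R}v,v)_{\dom}-(\mc{R}Pv,Pv)_{\dom}$. On the other side, moving the leading factor $(I-P)$ onto the first argument (again by $P^{*}=P$) gives $(v,(I-P)\mc{R}(I+P)v)_{\dom}=((I-P)v,\mc{R}(I+P)v)_{\dom}$; expanding the four resulting terms, the two mixed terms $(v,\mc{R}Pv)_{\dom}$ and $(Pv,\mc{R}v)_{\dom}$ coincide by the self-adjointness of $\mc{R}$ and therefore cancel, leaving exactly $(\mc{R}v,v)_{\dom}-(\mc{R}Pv,Pv)_{\dom}$. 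The two sides agree, which proves the lemma.

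The only genuinely delicate point is the identity $\mathcal{S}^{(h)}=\mathcal{S}\proj$, equivalently $\mc{R}^{(h)}=\proj\mc{R}\proj$: it rests on interpreting $\proj$ as acting in the spatial variables of the kernel and on a Fubini-type interchange to pass between the integral operator built from $R^{(h)}$ and its factorized form $(\mathcal{S}^{(h)})^{*}\mathcal{S}^{(h)}$. Once this structural identity is secured, the remainder is purely formal algebra in the self-adjoint operators $\mc{R}$ and $\proj$; it is worth noting that the computation uses only $\proj^{*}=\proj$ and $\mc{R}^{*}=\mc{R}$, and in particular never invokes the idempotency $\proj^{2}=\proj$.
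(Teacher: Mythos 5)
Your proof is correct and follows essentially the same route as the paper: both arguments hinge on the identity $\mc{R}^{(h)}=\proj\mc{R}\proj$ together with the self-adjointness of $\proj$ and $\mc{R}$ (the paper adds the zero term $(\mc{R}\proj v-\proj\mc{R}v,v)_{\dom}=0$ where you expand both sides and cancel the mixed terms, which is the same algebra). If anything, your version is more complete, since the paper simply asserts $\mc{R}^{(h)}=\proj\mc{R}\proj$, whereas you derive it from the factorization $\mc{R}^{(h)}=(\mathcal{S}\proj)^{*}(\mathcal{S}\proj)$ using $\lnrf^{(h)}(\vec{\om},\cdot)=\proj\lnrf(\vec{\om},\cdot)$.
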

\begin{proof}
For given $v\in L^2(\dom)$ and since $\mc{R}^{(h)}=\proj\mc{R} \proj$ and $(\mc{R} \proj v-\proj\mc{R} v, v)=0$, we obtain
\begin{align*}
\left(\mc{E}^{(h)} v,v\right)_{\dom}&=\left(\left(\mc{R}-\proj\mc{R} \proj\right)v,v\right)_{\dom}+\left(\mc{R} \proj v-\proj\mc{R} v, v\right)_{\dom}\\
&=\left(\left(I-\proj\right)\mc{R}\left(I+\proj\right)v,v\right)_{\dom},
\end{align*}
which gives the assertion.
\end{proof}
A direct consequence of Lemma \ref{lemma:errOper} is the upper bound for the operator norm of $\mc{E}^{(h)}$
\begin{align}\label{eq:E_{h}}
\norm{\mc{E}^{(h)}}_{\mc{B}(L^2(\dom))}\leq 2C_{\Pi_{\Vh;L^2}} h^{s}\norm{\mc{R}}_{\mc{B}(L^2(\dom), H^s(\dom))},
\end{align}
where we employed \eqref{eq:approxLinfty}.

Finally, we are ready to present the main result of this section.
\begin{proposition}[Conforming Galerkin approximation estimate]\label{prop:babuska}
Let \eqref{eqA11} from Assumption \ref{A:11} hold.  Then there are constants $C_1$, $C_2$ and $h_0$ such that
\[
\left| \lambda_{\kllevelrun}^{(h)}-\lambda_{\kllevelrun}\right|\leq C_1\lambda_{\kllevelrun}^{-1}h^{2s} \quad\text{ for all }\quad 0<h\leq h_0.\]
Furthermore, the eigenvectors $\{\phi_{\kllevelrun}^{(h)}\}_{\kllevelrun=1}^{\dimVh}$ of $\mc{R}^{(h)}$ can be selected such that
\[
\normL{\phi_{\kllevelrun}^{(h)}-\phi_{\kllevelrun}}{\dom}\leq C_{2}\lambda_{\kllevelrun}^{-1} h^{s}\quad\text{ for all }\quad 0<h\leq h_0.\]
Here, the constants $C_1$ and $C_2$ are independent of $h$ and $h_0>0$ is to be sufficiently small.
\end{proposition}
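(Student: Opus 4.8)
The plan is to treat this as a spectral-approximation problem for the compact, self-adjoint, nonnegative operator $\mc{R}$ and its conforming Galerkin (Rayleigh--Ritz) approximation $\mc{R}^{(h)}=\proj\mc{R}\proj$, following the classical Babu\v{s}ka--Osborn theory. The three ingredients already at hand are the Courant--Fischer min--max characterization of the eigenvalues, the error representation of Lemma~\ref{lemma:errOper}, and the operator-norm bound \eqref{eq:E_{h}} for $\mc{E}^{(h)}=\mc{R}-\mc{R}^{(h)}$. The weight $\lambda_{\kllevelrun}^{-1}$ will be produced by the smoothing action of $\mc{R}$ on the eigenfunctions, while the overall structure of the bounds is dictated by the best-approximation error $\normL{(I-\proj)\phi_{\kllevelrun}}{\dom}$.

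First I would record the one-sided bound. For $v\in\Vh$ one has $\proj v=v$ and hence $(\mc{R}^{(h)}v,v)_{\dom}=(\mc{R}v,v)_{\dom}$; restricting the max--min over $\kllevelrun$-dimensional subspaces to those contained in $\Vh$ then yields $\lambda_{\kllevelrun}^{(h)}\le\lambda_{\kllevelrun}$. For the reverse inequality I would test with the trial space $\proj E_{\kllevelrun}$, where $E_{\kllevelrun}=\spn\{\phi_1,\dots,\phi_{\kllevelrun}\}$; for $h$ sufficiently small $\proj$ is injective on $E_{\kllevelrun}$, so $\dim\proj E_{\kllevelrun}=\kllevelrun$, and the Rayleigh quotient of $\mc{R}$ on $\proj E_{\kllevelrun}$ is bounded below by $\lambda_{\kllevelrun}$ up to quantities controlled by $\max_{u\in E_{\kllevelrun},\,\normL{u}{\dom}=1}\normL{(I-\proj)u}{\dom}$.

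The crucial quantitative input is the smoothness of the eigenfunctions, which is where the factor $\lambda_{\kllevelrun}^{-1}$ enters. Since $\mc{R}=\mc{S}^{*}\mc{S}$ with $\mc{S},\mc{S}^{*}$ from \eqref{eq:S}--\eqref{eq:ajoint_S}, the $H^{s}$-integrability of $\lnrf$ under \eqref{eqA11} gives that $\mc{R}\colon L^2(\dom)\to H^{s}(\dom)$ is bounded with $\norm{\mc{R}}_{\mc{B}(L^2(\dom),H^{s}(\dom))}\lesssim\constHs$. The eigenequation then yields $\phi_{\kllevelrun}=\lambda_{\kllevelrun}^{-1}\mc{R}\phi_{\kllevelrun}$ and hence $\normHp{\phi_{\kllevelrun}}{s}{\dom}\lesssim\lambda_{\kllevelrun}^{-1}\constHs$; the sharper bound $\normHp{\phi_{\kllevelrun}}{s}{\dom}\lesssim\lambda_{\kllevelrun}^{-1/2}\constHs$ follows from $\phi_{\kllevelrun}=\lambda_{\kllevelrun}^{-1/2}\mc{S}^{*}\psi_{\kllevelrun}$ with $\norm{\psi_{\kllevelrun}}_{L^2(\Omega)}=1$, cf.\ \eqref{eq:psi}. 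Combined with the $L^2$-projection estimate \eqref{eq:approxL2} this gives $\normL{(I-\proj)\phi_{\kllevelrun}}{\dom}\le\CI h^{s}\normHp{\phi_{\kllevelrun}}{s}{\dom}\lesssim\lambda_{\kllevelrun}^{-1/2}h^{s}$, the best-approximation error that drives both assertions.

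It remains to assemble. For the eigenvalue bound I would substitute $v=\phi_{\kllevelrun}$ into Lemma~\ref{lemma:errOper} and use $\mc{R}\phi_{\kllevelrun}=\lambda_{\kllevelrun}\phi_{\kllevelrun}$ together with the self-adjoint idempotence of $I-\proj$; a short computation turns the right-hand side into $2\lambda_{\kllevelrun}\normL{(I-\proj)\phi_{\kllevelrun}}{\dom}^{2}-(\mc{R}(I-\proj)\phi_{\kllevelrun},(I-\proj)\phi_{\kllevelrun})_{\dom}$, so that with the sharp smoothing estimate one obtains $|\lambda_{\kllevelrun}-\lambda_{\kllevelrun}^{(h)}|\lesssim\lambda_{\kllevelrun}^{-1}h^{2s}$, the squaring being the usual Rayleigh--Ritz superconvergence. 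For the eigenvectors I would pass through the Riesz spectral projector of $\mc{R}^{(h)}$ associated with $\lambda_{\kllevelrun}^{(h)}$, fix the sign and normalization of $\phi_{\kllevelrun}^{(h)}$, and estimate $\normL{\phi_{\kllevelrun}-\phi_{\kllevelrun}^{(h)}}{\dom}\lesssim\delta_{\kllevelrun}^{-1}\norm{\mc{E}^{(h)}\phi_{\kllevelrun}}+\normL{(I-\proj)\phi_{\kllevelrun}}{\dom}$, where $\delta_{\kllevelrun}=\mathrm{dist}\bigl(\lambda_{\kllevelrun},\sigma(\mc{R})\setminus\{\lambda_{\kllevelrun}\}\bigr)$. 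The main obstacle is precisely the control of $\delta_{\kllevelrun}$: the eigenvalues accumulate at $0$ and may cluster, so the gap shrinks with $\kllevelrun$; the strict ordering \eqref{sortedeigenvals} guarantees a positive gap for each fixed $\kllevelrun$, and one must choose $h_0$ small enough (depending on $\kllevelrun$ through $\lambda_{\kllevelrun}$) that $\norm{\mc{E}^{(h)}}_{\mc{B}(L^2(\dom))}$ from \eqref{eq:E_{h}} falls below $\tfrac12\delta_{\kllevelrun}$, so that the $\kllevelrun$-th exact and approximate eigenvalues pair up cleanly. Matching the stated power $\lambda_{\kllevelrun}^{-1}$ in the eigenvector bound then requires combining this gap estimate with the smoothing estimate, and this bookkeeping --- rather than any single inequality --- is the delicate part; multiple eigenvalues are handled by replacing individual eigenfunctions with the eigenprojections \eqref{truemercer}.
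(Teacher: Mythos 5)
Your proposal follows essentially the same route as the paper: the paper's entire proof is the citation of \cite[Theorem 9.1]{babuska&osborn91}, i.e.\ the classical Babu\v{s}ka--Osborn theory for Rayleigh--Ritz approximation of compact self-adjoint operators, and your sketch reconstructs that theorem's argument in the present setting. Your ingredients are the right ones, and the eigenvalue half does work: the min--max principle gives $\lambda_{\kllevelrun}^{(h)}\le\lambda_{\kllevelrun}$, the factorization $\phi_{\kllevelrun}=\lambda_{\kllevelrun}^{-1/2}\mc{S}^{*}\psi_{\kllevelrun}$ gives $\normHp{\phi_{\kllevelrun}}{s}{\dom}\le\lambda_{\kllevelrun}^{-1/2}\constHs$, hence $\normL{(I-\proj)\phi_{\kllevelrun}}{\dom}\lesssim\lambda_{\kllevelrun}^{-1/2}h^{s}$, and the quadratic-form identity of Lemma~\ref{lemma:errOper}, combined with the trial-space argument, squares this best-approximation error into the claimed $\lambda_{\kllevelrun}^{-1}h^{2s}$.

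The place where your argument does not close---and you flag it yourself---is the eigenvector bound. Your spectral-projector estimate yields $\normL{\phi_{\kllevelrun}-\phi_{\kllevelrun}^{(h)}}{\dom}\lesssim\delta_{\kllevelrun}^{-1}\normL{\mc{E}^{(h)}\phi_{\kllevelrun}}{\dom}+\normL{(I-\proj)\phi_{\kllevelrun}}{\dom}\lesssim\delta_{\kllevelrun}^{-1}\lambda_{\kllevelrun}^{-1/2}h^{s}$, where $\delta_{\kllevelrun}$ is the spectral gap. This cannot be ``bookkept'' into the stated form $C_{2}\lambda_{\kllevelrun}^{-1}h^{s}$ with a constant uniform in $\kllevelrun$: since $\delta_{\kllevelrun}\le\lambda_{\kllevelrun}-\lambda_{\kllevelrun+1}\le\lambda_{\kllevelrun}$, one always has $\delta_{\kllevelrun}^{-1}\lambda_{\kllevelrun}^{-1/2}\ge\lambda_{\kllevelrun}^{-3/2}$, which exceeds $\lambda_{\kllevelrun}^{-1}$ by the unbounded factor $\lambda_{\kllevelrun}^{-1/2}$; the gap term is a genuine extra loss, not a refinement of the $\lambda_{\kllevelrun}^{-1}$ weight. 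The only resolution is to absorb the gap dependence into $C_{2}$ (and into $h_{0}$), so that these ``constants'' depend on $\kllevelrun$---which is precisely the situation in the cited Babu\v{s}ka--Osborn theorem, whose constants also depend on the eigenvalue index through the separation of $\lambda_{\kllevelrun}$ from the rest of the spectrum. With that reading your proof is as complete as the paper's one-line citation. Note, however, that the paper later treats $C_{1}$ and $C_{2}$ as uniform in $\kllevelrun$ (they are pulled out of sums over $\kllevelrun\le\kllevel$ in the proof of Theorem~\ref{finalthm}), and neither your argument nor the cited theorem delivers that uniformity without an additional assumption relating $\delta_{\kllevelrun}$ to $\lambda_{\kllevelrun}$; this is a weakness of the proposition as stated rather than of your reconstruction.
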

\begin{proof}The proof follows from \cite[Theorem 9.1]{babuska&osborn91}.
\end{proof}
A consequence of Proposition \ref{prop:babuska} is that, for any $\epsilon>0$  and for the Lebesgue measure $\lebesgue$, there holds the estimate
\begin{align}\label{convmeas}
\lebesgue\left(\left\{ \ \left|\phi_{\kllevelrun}^{(h)}-\phi_{\kllevelrun} \right|\ge \epsilon \right\} \right)
\leq \epsilon^{-2} \int_{\dom} \left|\phi_{\kllevelrun}^{(h)}-\phi_{\kllevelrun} \right|^2 \, \mathrm{d}\lebesgue
=\epsilon^{-2}\normL{\phi_{\kllevelrun}^{(h)}-\phi_{\kllevelrun}}{\dom}^2\le C_{2}\epsilon^{-2}\lambda_{\kllevelrun}^{-2} h^{2s}.
\end{align}

\subsection{Approximation of the covariance operator from samples}\label{subsec:approxcovhM}
We will approximate the spectral decomposition \eqref{mercerh}, which is based on the unknown stiffness matrix $\stiffnessmatrix$ by approximating the true covariance matrix $\vec{\Sigma}_{\vec{\lnrfrv}^{(h)}}\in \R^{\dimVh \times \dimVh}$.
Let us assume for the moment that we have such an estimate $\vec{\Sigma}^{(h;\nsamples)}_{\vec{\lnrfrv}^{(h)}}\in \R^{\dimVh \times \dimVh},$
where $\vec{\Sigma}^{(h;\nsamples)}_{\vec{\lnrfrv}^{(h)}}$ is also symmetric. Then by repeating the procedure for the derivation  \eqref{mercerh} in an analogous way, we obtain the approximations 
\begin{align}
\stiffnessmatrixM&:=\left(\massmatrix\right)^{\transpose}\vec{\Sigma}^{(h;\nsamples)}_{\vec{\lnrfrv}^{(h)}} \massmatrix  \approx \stiffnessmatrix=\left(\massmatrix\right)^{\transpose}\vec{\Sigma}^{(h)}_{\vec{\lnrfrv}^{(h)}} \massmatrix  \label{stiffnessample}\quad \text{and} \quad\\
\tildestiffnessmatrixM&:=\left(\CholStiffnessL\right)^{\transpose}\vec{\Sigma}^{(h;\nsamples)}_{\vec{\lnrfrv}^{(h)}}
\CholStiffnessL \approx  \tildstiffnessmatrix=\left(\CholStiffnessL \right)^{\transpose}\vec{\Sigma}^{(h)}_{\vec{\lnrfrv}^{(h)}}\CholStiffnessL  \label{tildestiffnessample},
\end{align}
and we encounter the following eigenvalue problem: Seek $(\widetilde{\vec{\Phi}}^{(h;\nsamples)}_{\kllevelrun},\lambda^{(h;\nsamples)}_{\kllevelrun} ) \in \R^{\dimVh} \times \R$ for all $\kllevelrun=1,\cdots,\dimVh$ such that
\begin{align}\label{genalizedeigenvalue3-estimator}
 \tildestiffnessmatrixM\widetilde{\vec{\Phi}}^{(h;\nsamples)}_{\kllevelrun} = \lambda^{(h;\nsamples)}_{\kllevelrun} \widetilde{\vec{\Phi}}^{(h;\nsamples)}_{\kllevelrun}.
\end{align}
As $\tildestiffnessmatrixM$ is a symmetric matrix, we have that $\left(\widetilde{\vec{\Phi}}^{(h;\nsamples)}_{\kllevelrun} ,\widetilde{\vec{\Phi}}^{(h;\nsamples)}_{\kllevelrun^{\prime}}  \right)_{\ell_2}=\delta_{\kllevelrun, \kllevelrun^{\prime}}$ after normalization.
Moreover, we encounter the generalized eigenvalue problem
\begin{align}\label{genalizedeigenvalue4-estimator}
 \stiffnessmatrixM  \vec{\Phi}^{(h;\nsamples)}_{\kllevelrun} = \lambda^{(h;\nsamples)}_{\kllevelrun} \massmatrix \vec{\Phi}^{(h;\nsamples)}_{\kllevelrun}.
\end{align}
Analogously, we have the equivalence 
\begin{align*}
&(\widetilde{\vec{\Phi}}^{(h;\nsamples)}_{\kllevelrun},\lambda^{(h;\nsamples)}_{\kllevelrun}) \in \R^{\dimVh} \times \R
\text{ is an eigenpair of \eqref{genalizedeigenvalue3-estimator}}  \Longleftrightarrow \\ 
&\left( \vec{\Phi}^{(h;\nsamples)}_{\kllevelrun}:=\left(\CholStiffnessL\right)^{-\transpose}\widetilde{\vec{\Phi}}^{(h;\nsamples)}_{\kllevelrun},
\lambda^{(h;\nsamples)}_{\kllevelrun}\right) \in \R^{\dimVh} \times \R
\text{ is an eigenpair of  \eqref{genalizedeigenvalue4-estimator}} .
\end{align*}
Thus we can derive approximations to $\tildstiffnessmatrix$ and $\stiffnessmatrix$ by
\begin{align}\label{stiffnessmatrixShhat}
\tildestiffnessmatrixM=\sum_{\dimVhrun=1}^{\dimVh} \lambda^{(h;\nsamples)}_{\dimVhrun}\widetilde{\vec{\Phi}}^{(h;\nsamples)}_{\dimVhrun}\otimes \widetilde{\vec{\Phi}}^{(h;\nsamples)}_{\dimVhrun}\quad \text{and} \quad  \stiffnessmatrixM=\sum_{\dimVhrun=1}^{\dimVh} \lambda^{(h;\nsamples)}_{\dimVhrun}\vec{\Phi}^{(h;\nsamples)}_{\dimVhrun}\otimes \vec{\Phi}^{(h;\nsamples)}_{\dimVhrun} \in \R^{\dimVh \times \dimVh}.
\end{align}
Then the eigensystem \eqref{stiffnessmatrixShhat} gives rise to a computable Mercer-like expansion
\begin{align}\label{mercerh2a}
R^{(h;\nsamples)}(\vec{x},\vec{x}^{\prime}) :=\sum_{\dimVhrun=1}^{\dimVh}\lambda^{(h;\nsamples)}_{\dimVhrun}
\phi^{(h;\nsamples)}_{\dimVhrun}(\vec{x}) \, \phi^{(h;\nsamples)}_{\dimVhrun}(\vec{x}^{\prime}).
\end{align}
Here, we use analogously 
\begin{align}\label{Sheigenfunc2}
\phi_{k}^{(h;\nsamples)}:=\vec{\Phi}^{(h;\nsamples)}_{k} \cdot \vec{\theta}^{(h)} \text{ for all } k=1,\cdots,\dimVh.
\end{align}
We observe, using $
\massmatrix=\CholStiffnessL\left(\CholStiffnessL\right)^{\transpose} \in \R^{\dimVh \times \dimVh},
$
that 
\begin{align}\label{philhmortho}
\left( \phi_{k}^{(h;\nsamples)}, \phi_{k^{\prime}}^{(h;\nsamples)}\right)_{L_2(\dom)}&= \left(\vec{\Phi}^{(h;\nsamples)}_{k}\right)^{\transpose}\massmatrix \vec{\Phi}^{(h;\nsamples)}_{k^{\prime}}= \left( \left(\CholStiffnessL\right)^{\transpose}\vec{\Phi}^{(h;\nsamples)}_{k},   \left(\CholStiffnessL\right)^{\transpose} \vec{\Phi}^{(h;\nsamples)}_{k^{\prime}}\right)_{\ell_2}
\nonumber \\&=\left(\widetilde{\vec{\Phi}}^{(h;\nsamples)}_{k}, \widetilde{\vec{\Phi}}^{(h;\nsamples)}_{k^{\prime}}\right)_{\ell_2}=\delta_{k,k^{\prime}}.
\end{align}
To bound the sampling error for the eigenvalues, we can
apply Weyl's inequality (see for instance \cite[(Eq. 3.1)]{IpsenNadler09}) to any of the two eigenvalue problems  \eqref{genalizedeigenvalue3-estimator} or  \eqref{genalizedeigenvalue4-estimator}, respectively. We will focus on  \eqref{genalizedeigenvalue3-estimator} as  \eqref{genalizedeigenvalue4-estimator} is a generalized eigenvalue problem, which makes it harder to derive bounds on the eigenvectors.
We obtain
\begin{align}
\label{weyleigenvalue}
\left| \lambda^{(h)}_{\kllevelrun} -\lambda^{(h;\nsamples)}_{\kllevelrun} \right| \le \left\|\tildstiffnessmatrix -\tildestiffnessmatrixM \right\|_{2 \to 2}, \quad 1\le  \kllevelrun \le \min\{\dimVh,\kllevel\}.
\end{align}
Now we can take Proposition \ref{prop:babuska} into account to get that, under Assumption \ref{A:11}, there are constants $C_1$ and $h_0$ such that
\begin{align}\label{weyleigenvalue2}
\left| \lambda_{\kllevelrun} -\lambda^{(h;\nsamples)}_{\kllevelrun} \right|&\le \left| \lambda_{\kllevelrun}-\lambda^{(h)}_{\kllevelrun}\right| + \left| \lambda^{(h)}_{\kllevelrun} -\lambda^{(h;\nsamples)}_{\kllevelrun} \right|\nonumber \\ & \leq C_1\lambda_{\kllevelrun}^{-1}h^{2s}+\left\|\tildstiffnessmatrix-\tildestiffnessmatrixM \right\|_{2 \to 2} \quad\text{ for all }\quad 0<h\leq h_0 \text{ and } 1\le \kllevelrun \le \min\{\dimVh,\kllevel\}.
\end{align}
For perturbation bounds on the eigenvectors, see \cite{varah70}, and for a recent versions of the Davis-Kahan theorem including randomness, see  \cite{orourke18}.
We use the variant presented in \cite[Corollary 1]{yu15}. To this end, recall the assumption that the eigenvalues are ordered non-increasingly, i.e., $\lambda^{(h)}_{1}\ge \dots \ge \lambda^{(h)}_{\dimVh}$ and $\lambda^{(h;\nsamples)}_{1}\ge \dots \ge \lambda^{(h;\nsamples)}_{\dimVh}$.
We define the discrete spectral gap 
\begin{align}\label{spectralgap1}
\delta^{(h;\nsamples)}_{\kllevelrun}:=\min \left\{\left|\lambda^{(h;\nsamples)}_{\kllevelrun-1}- \lambda^{(h)}_{\kllevelrun}\right|, \left|\lambda^{(h)}_{\kllevelrun}- \lambda^{(h;\nsamples)}_{\kllevelrun+1}\right|\right\}, \quad  \text{with }\lambda^{(h;\nsamples)}_{0}=\infty \text{ and }  \lambda^{(h;\nsamples)}_{\dimVh+1}=-\infty.
\end{align}
The quantity $\delta_{\kllevelrun}^{(h;\nsamples)}$ is problematic as it contains $\lambda^{(h)}_{\kllevelrun}, \lambda^{(h;\nsamples)}_{\kllevelrun\pm 1}$. We therefore  aim to replace it by a quantity which only depends on the continuous problem. To this end, we adopt the strategy presented in \cite{yu15}:
Let $\delta_{\kllevelrun}$ be the continuous spectral gap defined by
\begin{align}\label{defn:gap}
\delta_{\kllevelrun}:=\min \left\{\lambda_{\kllevelrun-1}- \lambda_{\kllevelrun}, \lambda_{\kllevelrun}- \lambda_{\kllevelrun+1}\right\} \text{ with } \lambda_0=\infty.
\end{align}

\begin{assumption}[Spectral gap]\label{ass:spectralgap}
Assume that there is a sufficiently small positive parameter $h_{1}\le h_0$ and assume that
$\tildestiffnessmatrixM$ is a good approximation to $\tildstiffnessmatrix$, i.e., for $h\le h_1$, there holds
\begin{align}
\label{condspectralgap1}
\delta_{\kllevelrun}\ge 4C_1h^{2s} \lambda_{\kllevelrun+1}^{-1} +4\left\|\tildstiffnessmatrix-\tildestiffnessmatrixM \right\|_{2 \to 2}.
\end{align}
\end{assumption}
\begin{theorem}\label{theoremeigenfucntions}
Let  \eqref{condspectralgap1} be valid. It then holds
\begin{align}\label{spectralgap3}
\delta^{(h;\nsamples)}_{\kllevelrun} \ge \frac{1}{4} \delta_{\kllevelrun}.
\end{align}
Furthermore it holds 
\begin{align}
\label{kahandaviscor}
\left\| \widetilde{\vec{\Phi}}^{(h)}_{\kllevelrun}- \widetilde{\vec{\Phi}}^{(h;\nsamples)}_{\kllevelrun} \right\|_2\le C \frac{\left\|\tildstiffnessmatrix-\tildestiffnessmatrixM \right\|_{2\to 2}}{\delta^{(h;\nsamples)}_{\kllevelrun}} \le 4C \frac{\left\|\tildstiffnessmatrix-\tildestiffnessmatrixM \right\|_{2\to 2}}{\delta_{\kllevelrun}},
\end{align}
where we fixed a sign for $\widetilde{\vec{\Phi}}^{(h;\nsamples)}_{\kllevelrun}$ such that $\widetilde{\vec{\Phi}}^{(h)}_{\kllevelrun}\cdot \widetilde{\vec{\Phi}}^{(h;\nsamples)}_{\kllevelrun}\ge 0$.
Moreover for $\phi_{\kllevelrun}^{(h;\nsamples)}:=\left(\CholStiffnessL\right)^{\transpose} \widetilde{\vec{\Phi}}^{(h;\nsamples)}_{k} \cdot \vec{\theta}^{(h)}$ the bound
\begin{align}
\label{phivecerror}
\left\|\phi_{\kllevelrun}^{(h)}- \phi_{\kllevelrun}^{(h;\nsamples)} \right\|_{L_{2}(\dom)}\le C \frac{\left\|\tildstiffnessmatrix-\tildestiffnessmatrixM \right\|_{2\to 2}}{\delta^{(h;\nsamples)}_{\kllevelrun}} \le 4C \frac{ \left\|\tildstiffnessmatrix-\tildestiffnessmatrixM \right\|_{2\to 2}}{\delta_{\kllevelrun}}
\end{align}
holds.
\end{theorem}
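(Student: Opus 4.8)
The plan is to establish the three claims in order, using the gap estimate to drive the other two. First I would prove \eqref{spectralgap3} by bounding each of the two quantities inside the minimum defining $\delta^{(h;\nsamples)}_{\kllevelrun}$ from below by $\tfrac14\delta_{\kllevelrun}$ separately. For the term $\lambda^{(h;\nsamples)}_{\kllevelrun-1}-\lambda^{(h)}_{\kllevelrun}$ I would insert the continuous eigenvalues via the telescoping decomposition
\[
\lambda^{(h;\nsamples)}_{\kllevelrun-1}-\lambda^{(h)}_{\kllevelrun} = \bigl(\lambda^{(h;\nsamples)}_{\kllevelrun-1}-\lambda_{\kllevelrun-1}\bigr) + \bigl(\lambda_{\kllevelrun-1}-\lambda_{\kllevelrun}\bigr) + \bigl(\lambda_{\kllevelrun}-\lambda^{(h)}_{\kllevelrun}\bigr),
\]
controlling the outer two brackets through Weyl's inequality \eqref{weyleigenvalue} and the Galerkin estimate of Proposition \ref{prop:babuska}, while the middle bracket is at least $\delta_{\kllevelrun}$ by \eqref{defn:gap}. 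Since the eigenvalues are ordered non-increasingly, $\lambda_{\kllevelrun-1}^{-1}\le\lambda_{\kllevelrun}^{-1}\le\lambda_{\kllevelrun+1}^{-1}$, so every factor $C_1\lambda^{-1}h^{2s}$ can be majorized by $C_1\lambda_{\kllevelrun+1}^{-1}h^{2s}$. The spectral-gap Assumption \ref{ass:spectralgap}, i.e. \eqref{condspectralgap1}, then forces the combined perturbation to be at most $\tfrac34\delta_{\kllevelrun}$, leaving the claimed lower bound $\tfrac14\delta_{\kllevelrun}$. The term $\lambda^{(h)}_{\kllevelrun}-\lambda^{(h;\nsamples)}_{\kllevelrun+1}$ is handled symmetrically by decomposing through $\lambda_{\kllevelrun}$ and $\lambda_{\kllevelrun+1}$, again using monotonicity to collapse the inverse-eigenvalue factors onto $\lambda_{\kllevelrun+1}^{-1}$.

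Next I would obtain the eigenvector bound \eqref{kahandaviscor} by applying the Davis--Kahan $\sin\Theta$ theorem in the single-vector variant of \cite[Corollary 1]{yu15} to the pair of symmetric matrices $\tildstiffnessmatrix$ and $\tildestiffnessmatrixM$ arising from \eqref{genalizedeigenvalue3} and \eqref{genalizedeigenvalue3-estimator}. This directly yields the first inequality, with the discrete gap $\delta^{(h;\nsamples)}_{\kllevelrun}$ in the denominator; the sign normalization $\widetilde{\vec{\Phi}}^{(h)}_{\kllevelrun}\cdot\widetilde{\vec{\Phi}}^{(h;\nsamples)}_{\kllevelrun}\ge0$ is precisely what is needed to pass from the subspace-angle ($\sin\Theta$) bound to a genuine bound on the vector difference. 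Substituting the just-proved estimate \eqref{spectralgap3} then replaces $\delta^{(h;\nsamples)}_{\kllevelrun}$ by $\tfrac14\delta_{\kllevelrun}$ and produces the second inequality.

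Finally, for the $L^2$ eigenfunction bound \eqref{phivecerror} I would exploit that the coordinate map $\vec{a}\mapsto\bigl((\CholStiffnessL)^{-\transpose}\vec{a}\bigr)\cdot\vec{\theta}^{(h)}$ is an isometry from $(\R^{\dimVh},\norm{\vec{a}}_{\ell_2})$ into $L^2(\dom)$. Indeed, using $\massmatrix=\CholStiffnessL(\CholStiffnessL)^{\transpose}$ exactly as in \eqref{philhmortho}, one computes $\normL{((\CholStiffnessL)^{-\transpose}\vec{a})\cdot\vec{\theta}^{(h)}}{\dom}^2 = \vec{a}^{\transpose}(\CholStiffnessL)^{-1}\massmatrix(\CholStiffnessL)^{-\transpose}\vec{a}=\norm{\vec{a}}_{\ell_2}^2$. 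Applying this isometry to $\vec{a}=\widetilde{\vec{\Phi}}^{(h)}_{\kllevelrun}-\widetilde{\vec{\Phi}}^{(h;\nsamples)}_{\kllevelrun}$ gives the identity $\normL{\phi_{\kllevelrun}^{(h)}-\phi_{\kllevelrun}^{(h;\nsamples)}}{\dom}=\norm{\widetilde{\vec{\Phi}}^{(h)}_{\kllevelrun}-\widetilde{\vec{\Phi}}^{(h;\nsamples)}_{\kllevelrun}}_{\ell_2}$, so \eqref{phivecerror} follows immediately from \eqref{kahandaviscor}.

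The main obstacle is the gap estimate: one must route every perturbation through the continuous eigenvalues, use the non-increasing ordering to collapse all inverse-eigenvalue factors onto the single worst index $\lambda_{\kllevelrun+1}^{-1}$, and then verify that the constants in Assumption \ref{ass:spectralgap} are calibrated so that the two error contributions sum to at most $\tfrac34\delta_{\kllevelrun}$. Once \eqref{spectralgap3} is secured, the remaining two claims reduce to a black-box application of \cite{yu15} together with the elementary mass-matrix isometry.
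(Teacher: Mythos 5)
Your proposal is correct and takes essentially the same route as the paper's own proof: the gap estimate \eqref{spectralgap3} is driven by Weyl's inequality \eqref{weyleigenvalue}, the Babu\v{s}ka--Osborn bound of Proposition \ref{prop:babuska}, eigenvalue monotonicity and Assumption \ref{ass:spectralgap}; the eigenvector bound \eqref{kahandaviscor} is a black-box application of the Davis--Kahan theorem in the form of \cite[Corollary 1]{yu15}; and \eqref{phivecerror} follows from the Cholesky/mass-matrix isometry $\normL{\phi_{\kllevelrun}^{(h)}-\phi_{\kllevelrun}^{(h;\nsamples)}}{\dom}=\|\widetilde{\vec{\Phi}}^{(h)}_{\kllevelrun}-\widetilde{\vec{\Phi}}^{(h;\nsamples)}_{\kllevelrun}\|_{\ell_2}$, exactly as in the paper. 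The only difference is bookkeeping: the paper proves the gap estimate in two stages via the intermediate semi-discrete gap (first $\delta_{\kllevelrun}\le 2\delta^{(h)}_{\kllevelrun}$, then $\delta^{(h)}_{\kllevelrun}\le 2\delta^{(h;\nsamples)}_{\kllevelrun}$), whereas you telescope through the continuous eigenvalues in a single pass bounding each branch of the minimum by $\tfrac14\delta_{\kllevelrun}$ directly — same ingredients, same conclusion.
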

\begin{proof}
The first assertion is a generalization of an argument presented in \cite{yu15} to the case of several discretization parameters.
Recall the discrete spectral gap $\delta^{(h;\nsamples)}_{\kllevelrun}$ in \eqref{spectralgap1}. 
We show \eqref{spectralgap3} by the following strategy:
First, we observe
\begin{align*}
\delta_{\kllevelrun}&\le \left|\lambda_{\kllevelrun-1}- \lambda_{\kllevelrun} \right|=\left|\lambda_{\kllevelrun-1}-\lambda^{(h)}_{\kllevelrun-1}+\lambda^{(h)}_{\kllevelrun-1}-\lambda^{(h)}_{\kllevelrun}+\lambda^{(h)}_{\kllevelrun}- \lambda_{\kllevelrun} \right|\\
&\le \left|\lambda_{\kllevelrun-1}-\lambda^{(h)}_{\kllevelrun-1}\right|+\left|\lambda^{(h)}_{\kllevelrun-1}-\lambda^{(h)}_{\kllevelrun}\right|+\left|\lambda^{(h)}_{\kllevelrun}- \lambda_{\kllevelrun} \right|.
\end{align*}
Similarly, we obtain
\begin{align*}
\delta_{\kllevelrun}&\le \left|\lambda_{\kllevelrun}- \lambda_{\kllevelrun+1} \right|=\left|\lambda_{\kllevelrun}-\lambda^{(h)}_{\kllevelrun}+\lambda^{(h)}_{\kllevelrun}-\lambda^{(h)}_{\kllevelrun+1}+\lambda^{(h)}_{\kllevelrun+1}- \lambda_{\kllevelrun+1} \right|\\
&\le \left|\lambda_{\kllevelrun}-\lambda^{(h)}_{\kllevelrun}\right|+\left|\lambda^{(h)}_{\kllevelrun}-\lambda^{(h)}_{\kllevelrun+1}\right|+\left|\lambda^{(h)}_{\kllevelrun+1}- \lambda_{\kllevelrun+1} \right|.
\end{align*}
Using Proposition \ref{prop:babuska} and the fact that the eigenvalues are sorted, i.e., $\lambda_{\kllevelrun-1} \ge \lambda_{\kllevelrun}\ge \lambda_{\kllevelrun+1}$, we obtain for all $0<h\leq h_0$ the bound
\begin{align*}
\max\left\{\left|\lambda_{\kllevelrun-1}-\lambda^{(h)}_{\kllevelrun-1}\right|,\left|\lambda^{(h)}_{\kllevelrun}- \lambda_{\kllevelrun} \right|,\left|\lambda_{\kllevelrun+1}-\lambda^{(h)}_{\kllevelrun+1}\right| \right\} \le C_1 \lambda_{\kllevelrun+1}^{-1}h^{2s} \le  \frac{1}{4} \delta_{\kllevelrun},
\end{align*}
where we have used the spectral gap assumption \eqref{condspectralgap1} in the last step.  
Hence, we infer
\begin{align*}
\delta_{\kllevelrun} \le \min \left\{ \left|\lambda^{(h)}_{\kllevelrun-1}-\lambda^{(h)}_{\kllevelrun}\right|,\left|\lambda^{(h)}_{\kllevelrun}-\lambda^{(h)}_{\kllevelrun+1}\right|\right\} + \frac{1}{2}\delta_{\kllevelrun} = \delta^{(h)}_{\kllevelrun} + \frac{1}{2}\delta_{\kllevelrun}.
\end{align*}
This inequality implies
\begin{align}\label{spectralgap_auxiliary1}
\frac{1}{2} \delta_{\kllevelrun} \le \delta^{(h)}_{\kllevelrun} =\min
\left\{ \left|\lambda^{(h)}_{\kllevelrun-1}-\lambda^{(h)}_{\kllevelrun}\right|,\left|\lambda^{(h)}_{\kllevelrun}-\lambda^{(h)}_{\kllevelrun+1}\right|\right\}.
\end{align}
Moreover, we have
\begin{align*}
\delta^{(h)}_{\kllevelrun}\le \left|\lambda^{(h)}_{\kllevelrun-1}- \lambda^{(h)}_{\kllevelrun} \right|=\left|\lambda^{(h)}_{\kllevelrun-1}-\lambda^{(h;\nsamples)}_{\kllevelrun-1}+\lambda^{(h;\nsamples)}_{\kllevelrun-1}- \lambda^{(h)}_{\kllevelrun} \right|\le \left|\lambda^{(h)}_{\kllevelrun-1}-\lambda^{(h;\nsamples)}_{\kllevelrun-1}\right|+\left|\lambda^{(h;\nsamples)}_{\kllevelrun-1}- \lambda^{(h)}_{\kllevelrun} \right|
\end{align*}
and also
\begin{align*}
\delta^{(h)}_{\kllevelrun}\le \left|\lambda^{(h)}_{\kllevelrun}- \lambda^{(h)}_{\kllevelrun+1} \right|=\left|\lambda^{(h)}_{\kllevelrun}-\lambda^{(h;\nsamples)}_{\kllevelrun}+\lambda^{(h;\nsamples)}_{\kllevelrun}- \lambda^{(h)}_{\kllevelrun+1} \right|\le \left|\lambda^{(h)}_{\kllevelrun}-\lambda^{(h;\nsamples)}_{\kllevelrun}\right|+\left|\lambda^{(h;\nsamples)}_{\kllevelrun}- \lambda^{(h)}_{\kllevelrun+1} \right|.
\end{align*}
Using Weyl's Theorem, i.e., \eqref{weyleigenvalue}, and the spectral gap assumption \eqref{condspectralgap1}, we obtain
\begin{align*}
\max\left\{ \left|\lambda^{(h)}_{\kllevelrun-1}-\lambda^{(h;\nsamples)}_{\kllevelrun-1}\right|,\left|\lambda^{(h)}_{\kllevelrun}-\lambda^{(h;\nsamples)}_{\kllevelrun}\right| \right\}\le \left\| \tildstiffnessmatrix-\tildestiffnessmatrixM \right\|_{2 \to 2} \le \frac{1}{4}\delta_{\kllevelrun} \le \frac{1}{2}\delta^{(h)}_{\kllevelrun},
\end{align*}
where we have used \eqref{spectralgap_auxiliary1} in the last step. Hence, we obtain
\begin{align*}
\delta^{(h;\nsamples)}_{\kllevelrun}= \min\left\{ \left|\lambda^{(h)}_{\kllevelrun}-\lambda^{(h;\nsamples)}_{\kllevelrun-1}\right|,\left|\lambda^{(h)}_{\kllevelrun+1}-\lambda^{(h;\nsamples)}_{\kllevelrun}\right| \right\}\ge \frac{1}{2}\delta^{(h)}_{\kllevelrun} \ge \frac{1}{4}\delta_{\kllevelrun}
\end{align*}
where have used again \eqref{spectralgap_auxiliary1} in the last step. 

Moreover, we observe 
\begin{align*}
\left\|\phi_{\kllevelrun}^{(h)}- \phi_{\kllevelrun}^{(h;\nsamples)} \right\|^2_{L_{2}(\dom)}= \left( \Phi^{(h)}_{\kllevelrun}-\Phi^{(h;\nsamples)}_{\kllevelrun}\right)^{\transpose} \massmatrix \left( \Phi^{(h)}_{\kllevelrun}-\Phi^{(h;\nsamples)}_{\kllevelrun}\right)=  \left\| \widetilde{\Phi}^{(h)}_{\kllevelrun}-\widetilde{\Phi}^{(h;\nsamples)}_{\kllevelrun}\right\|^2_{\ell_2}.
\end{align*}
The two other assertions follow directly with the Davis-Kahan theorem as in \cite[Corollary1]{yu15}.
\end{proof}
Finally, we are left with bounding $\left\| \tildstiffnessmatrix-\tildestiffnessmatrixM \right\|_{2\to 2}$. 
To this end, we first observe 
\begin{align*}
\tildstiffnessmatrix-\tildestiffnessmatrixM&:=\left(\CholStiffnessL\right)^{\transpose}\left(\vec{\Sigma}_{\vec{\lnrfrv}^{(h)}}- \vec{\Sigma}^{(h;\nsamples)}_{\vec{\lnrfrv}^{(h)}}\right)\CholStiffnessL  \in \R^{\dimVh \times \dimVh}.
\end{align*}
Moreover, we have 
\begin{align}\label{tildeSopnorm}
\left\| \tildstiffnessmatrix-\tildestiffnessmatrixM \right\|_{2\to 2}& =
	\sup_{\vec{x} \in \R^{\dimVh}\setminus \{ \vec{0}\}} \frac{\left( \left(\tildstiffnessmatrix-\tildestiffnessmatrixM \right) \vec{x},\vec{x}\right)_{\ell_2}}{\| \vec{x}\|^2}\nonumber \\&= \sup_{\vec{y} \in \R^{\dimVh}\setminus \{ \vec{0}\}} \frac{\left( \left( \vec{\Sigma}_{\vec{\lnrfrv}^{(h)}}- \vec{\Sigma}^{(h;\nsamples)}_{\vec{\lnrfrv}^{(h)}}\right) \vec{y},\vec{y}\right)_{\ell_2}}{\|\left(\CholStiffnessL\right)^{-1}  \vec{y}\|_{\ell_2}^2},
\end{align}
where the choice $ \vec{y}=\CholStiffnessL \vec{x}$ is admissible since $\CholStiffnessL$ is regular.
Note here also that we have
\begin{align*}
\lambda_{\min}\left(\left(\massmatrix\right)^{-1}\right) \|\vec{y}\|^{2} \le \left(\left( \massmatrix\right)^{-1}\vec{y},\vec{y} \right) \le \lambda_{\max}\left(\left(\massmatrix\right)^{-1}\right)\|\vec{y}\|^{2}
\end{align*}
and we observe $\lambda_{\min}\left(\left(\massmatrix\right)^{-1}\right)=\lambda^{-1}_{\max}\left(\massmatrix\right)$ and $\lambda_{\max}\left(\left(\massmatrix\right)^{-1}\right)=\lambda^{-1}_{\min}\left(\massmatrix\right)$. 
Consequently,  we obtain the bound
\begin{equation}\label{lieqneu}
	\lambda_{\min}\left(\massmatrix\right) \left\| \vec{\Sigma}_{\vec{\lnrfrv}^{(h)}}- \vec{\Sigma}^{(h;\nsamples)}_{\vec{\lnrfrv}^{(h)}}  \right\|_{2\to 2} \le \left\| \tildstiffnessmatrix-\tildestiffnessmatrixM \right\|_{2\to 2} \le \lambda_{\max}\left(\massmatrix\right) \left\| \vec{\Sigma}_{\vec{\lnrfrv}^{(h)}}- \vec{\Sigma}^{(h;\nsamples)}_{\vec{\lnrfrv}^{(h)}}  \right\|_{2\to 2}.
\end{equation}
The matrix norm $ \left\| \vec{\Sigma}_{\vec{\lnrfrv}^{(h)}}- \vec{\Sigma}^{(h;\nsamples)}_{\vec{\lnrfrv}^{(h)}}  \right\|_{2\to 2}$
will be estimated in probabilistic terms in Section \ref{subsec:approxcovhMfinal}.

\subsection{Sub-Gaussian tails for observed random vectors}\label{sec:tails}
In order to invoke recent results for bounding the sampling error, we first need to consider the sub-Gaussian property of the random variables \eqref{observedrandomvector}, i.e.,  of the random vector $\vec{\lnrfrv}^{(h)}$. 
This random vector consists of the coefficients of the discrete random field $\lnrf^{(h)}:\Omega \times \dom \to \R$, i.e., 
\begin{align*}
\lnrf^{(h)}(\vec{\om},\vec{x})= \sum_{\dimVhrun=1}^{\dimVh} \lnrfrv^{(h)}_{\dimVhrun}(\vec{\om}) \theta_{\dimVhrun}^{(h)}(\vec{x}) = \vec{\lnrfrv}^{(h)}(\vec{\om}) \cdot \vec{\theta}(\vec{x}).
\end{align*}
We mainly focus on two different basis functions. First, we choose $\{\theta^{(h)}_{\dimVhrun}\}_{1\le \dimVhrun \le \dimVh}$ to be a nodal basis and, next,  we choose $\{\theta^{(h)}_{\dimVhrun}\}_{1\le \dimVhrun \le \dimVh}$ to be an $L^{2}$-orthonormal basis.
\subsubsection{Nodal basis}
To start, let us consider the setting of so-called standard information. To this end, 
recall the point set $X_{\dimVh}=\left\{ \vec{x}_1,\dots,\vec{x}_{\dimVh}\right\}\subset \dom$, which determines the nodal basis functions and thus the degrees of freedom for the finite element space $\Vh$.
In this case, it holds 
\begin{align*}
\lnrf^{(h)}(\vec{\om},\vec{x}) = \vec{\lnrfrv}^{(h)}(\vec{\om}) \cdot \vec{\theta}(\vec{x})=\sum_{\dimVhrun=1}^{\dimVh} \lnrf^{(h)}_{\dimVhrun}(\vec{\om},\vec{x}_{\dimVhrun})\theta_{\dimVhrun}^{(h)}(\vec{x}).
\end{align*}
Thus, we have
\begin{align}\label{observedrandomvectornodal}
\vec{\lnrfrv}^{(h)}: \Omega \to \R^{\dimVh}, \quad \vec{\om} \mapsto \vec{\lnrfrv}^{(h)}(\vec{\om})=\left(\lnrf^{(h)}(\vec{\om},\vec{x}_1),\dots,\lnrf^{(h)}(\vec{\om},\vec{x}_{\dimVh}) \right)^{\transpose}.
\end{align}
Since $\lnrf^{(h)}$ is assumed to be a centered Gaussian random field, it follows by definition that the random vector $\vec{\lnrfrv}^{(h)}$ is also  distributed according to a multi-variate Gaussian law, i.e., 
\begin{align*}
\vec{\lnrfrv}^{(h)}\sim \mathcal{N}\left(\vec{0},\vec{\Sigma}_{\vec{K}^{(h)}} \right)
\end{align*}
with 
\begin{align*}
\vec{\Sigma}_{\vec{K}^{(h)}}
:=\expect{\left(\vec{\lnrfrv}^{(h)}-\expect{\vec{\lnrfrv}^{(h)}} \right) \otimes \left(\vec{\lnrfrv}^{(h)}-\expect{\vec{\lnrfrv}^{(h)}} \right)}
=\expect{\vec{\lnrfrv}^{(h)} \otimes \vec{\lnrfrv}^{(h)} }.
\end{align*}
We can invoke \eqref{finitemomentsh} and \eqref{truecovariance}, i.e., $\left( \vec{\Sigma}_{\vec{K}^{(h)}}\right)_{\dimVhrun,\dimVhrun^{\prime}} \le 4(\constC^{(h)})^2$ . 
Finally, we obtain by Chernoff's inequality \cite{cherstat} the bound
\begin{align*}
	&\prob\left\{  \vec{v} \cdot\left( \vec{\lnrfrv}^{(h)}-\expect{\vec{\lnrfrv}^{(h)}}\right) > t\right\} \le  \exp\left(-\frac{t^2}{8 (\constC^{(h)})^2} \right)
\end{align*}
for any $\vec{v} $ with $\| \vec{v} \|_2 =1$.
Note that this implies {$4 (\constC^{(h)})^2=\rho^{-1}$} in the sense of \cite[Definition 1]{cai2016}.
\subsubsection{$L^2$-orthonormal basis}
Now let us consider the setting of so-called linear information. Here, we can not directly rely on the definition of a centered Gaussian random field to deduce that the coefficients are distributed with a multivariate normal law.
In this case, the random vector  \eqref{observedrandomvector} gets encoded as
\begin{align*}
\vec{\lnrfrv}^{(h)}:\Omega \to \R^{\dimVh}, \quad \vec{\om} &\mapsto \left(\int_{\dom}  \lnrf^{(h)} (\vec{\om},\vec{x}) \theta^{(h)}_{1}(\vec{x})  \, \mathrm{d}\lebesgue(\vec{x}), \dots,\int_{\dom}  \lnrf^{(h)} (\vec{\om},\vec{x}) \theta^{(h)}_{\dimVh}(\vec{x})  \, \mathrm{d}\lebesgue(\vec{x})\right)^{\transpose}\\
&=\int_{\dom}\lnrf^{(h)} (\vec{\om},\vec{x}) \vec{\theta}^{(h)}(\vec{x})\, \mathrm{d}\lebesgue(\vec{x}),
\end{align*}
where $\{\theta_j^{(h)}, j=1,\ldots,\dimVh\}$ is an $L_2$-orthonormal basis.
The individual random variables are given as
\begin{align}\label{Ycomponents}
\lnrfrv^{(h)}_{\dimVhrun}:\Omega \to \R^{\dimVh}, \quad \vec{\om} \mapsto \int_{\dom}  \lnrf^{(h)} (\vec{\om},\vec{x}) \theta^{(h)}_{\dimVhrun}(\vec{x})  \, \mathrm{d}\lebesgue(\vec{x}).
\end{align}
Hence, for the expected value, we observe
\begin{align*}
\expect{\lnrfrv^{(h)}_{\dimVhrun}}&=\int_{\Omega} \int_{\dom}  \lnrf^{(h)} (\vec{\om},\vec{x}) \theta^{(h)}_{\dimVhrun}(\vec{x})  \, \mathrm{d}\lebesgue(\vec{x})\, \mathrm{d}\prob(\vec{\om})= \int_{\dom} \int_{\Omega} \lnrf^{(h)} (\vec{\om},\vec{x}) \, \mathrm{d}\prob(\vec{\om})\theta^{(h)}_{\dimVhrun}(\vec{x})  \, \mathrm{d}\lebesgue(\vec{x})\\&=\int_{\dom}\expect{\lnrf^{(h)} (\cdot,\vec{x})} \theta^{(h)}_{\dimVhrun}(\vec{x})  \, \mathrm{d}\lebesgue(\vec{x})=0,
\end{align*}
which implies
\begin{align*}
\expect{\vec{\lnrfrv}^{(h)}}=\int_{\dom}\expect{\lnrf^{(h)} (\cdot,\vec{x})} \vec{\theta}^{(h)}(\vec{x})  \, \mathrm{d}\lebesgue(\vec{x})=\vec{0}.
\end{align*}
For the variance, we observe
\begin{align*}
&\expect{\left(\vec{\lnrfrv}^{(h)}-\expect{\vec{\lnrfrv}^{(h)}} \right) \otimes \left(\vec{\lnrfrv}^{(h)}-\expect{\vec{\lnrfrv}^{(h)}} \right)}
=\expect{\vec{\lnrfrv}^{(h)} \otimes \vec{\lnrfrv}^{(h)} }=:\vec{\Sigma}_{\vec{K}^{(h)}},
\end{align*}
where
\begin{equation}\label{li2neu}
	\left(\vec{\Sigma}_{\vec{K}^{(h)}}\right)_{k,k^{\prime}} =\int_{\dom} \int_{\dom}
	\text{Cov}_{\lnrf^{(h)}}(\vec{x},\vec{x}^{\prime}) \theta_{k}^{h}(\vec{x})\theta_{k^{\prime}}^{h}(\vec{x}^{\prime}) \mathrm{d}\vec{x} \mathrm{d} \vec{x}^{\prime}
\end{equation}
and
\begin{equation}\label{li2neu2}
	\left(\vec{\Sigma}_{\vec{K}^{(h)}}\right)_{k,k^{\prime}} \leq 
	\|\text{Cov}_{\lnrf^{(h)}}\|_{L^2(\dom \times \dom)} \leq 4 (\constC^{(h)})^2 \cdot | \dom|.
\end{equation}
For all $\vec{v} \in \R^{\dimVh}$ with $\| \vec{v}\|_2=1$, we have
\begin{align*}
\vec{v}\cdot \left(\vec{\lnrfrv}^{(h)}-\expect{\vec{\lnrfrv}^{(h)}}\right)
	&=\sum_{k=1}^{\dimVh} \left(\lnrf^{(h)}(\om_j,\cdot), \theta_j^{(h)} \right)_{L^2(\dom)} v_j \sim \mathcal{N} (0,\vec{v}\cdot \vec{\Sigma}_{\vec{K}^{(h)}}\vec{v}).
\end{align*}
Furthermore, given $\vec{x} \in \dom$, it holds
\begin{align}\label{distxfixed}
\lnrf^{(h)}(\cdot,\vec{x}) \sim \mathcal{N}\left(0,\text{Cov}_{\lnrf^{(h)}}(\vec{x},\vec{x})\right).
\end{align}
Therefore we can invoke Chernoff's inequality to obtain for all $\vec{x} \in \dom$ and $t>0$
\begin{align*}
\prob \left\{ \ \left| \lnrf^{(h)} (\omega,\vec{x}) \right| >t  \right\}
&\le  \exp\left(-\frac{t^2}{2\text{Cov}_{\lnrf^{(h)}}(\vec{x},\vec{x})} \right)\le  \exp\left(-\frac{t^2}{8(\constC^{(h)})^2|\dom| } \right),
\end{align*}
which is uniform for $\vec{x}\in \dom$, see \eqref{li2neu2}. Consequently, we have, for all $\vec{v} \in \R^{\dimVh} $  with  $\| \vec{v}\|_2=1$, the estimate
\begin{align*}
	&\prob\left\{  \left| \vec{v} \cdot\left( \vec{\lnrfrv}^{(h)}-\expect{\vec{\lnrfrv}^{(h)}}\right) \right|  > t\right\} \le \exp\left(-\frac{t^2}{{8  (\constC^{(h)}})^2 |\dom| } \right).
\end{align*}
Note that this implies $4(\constC^{(h)})^2 |\dom| 
 =\rho^{-1} $ in the sense of \cite[Definition 1]{cai2016}.
Hence, the random vector $\vec{\lnrfrv}^{(h)}$ obeys a sub-Gaussian bound in the sense of \cite[Eq. (7)]{cai2010}.
We will make use of this estimate together with the results of \cite{cai2010} to bound the variance  of the sampled covariance matrix later on.

\subsection{Covariance estimation from samples using tapering and decay assumptions}\label{subsec:approxcovhMfinal}
In this subsection, we focus on the sampling error and specifically on the induced variance of our estimator for the covariance matrix. We assume in this subsection the parameter $h$ to be fixed. Thus the dimension of the covariance matrix is given as $\dimVh$. We will provide a constructive approach to obtain an estimation to the covariance matrix  $\vec{\Sigma}_{\vec{\lnrfrv}^{(h)}}$. To this end, with given samples $\vec{\lnrfrv}^{(\nsamplesrun;h)}$ for $\nsamplesrun=1,\cdots,M$, let the sample mean and the maximum likelihood estimator for the covariance matrix $\vec{\Sigma}_{\vec{\lnrfrv}^{(h)}}$ be \cite[Eq. (2)]{cai2010}
\begin{align}\label{astest}
\bar{\vec{\lnrfrv}}^{(h;\nsamples)}&:=\frac{1}{\nsamples}\sum_{\nsamplesrun=1}^{\nsamples}\vec{\lnrfrv}^{(\nsamplesrun;h)} \in \R^{\dimVh}\quad \text{and} \\
\label{astest2}
\bar{\vec{\Sigma}}^{(h;\nsamples)}_{\vec{\lnrfrv}^{(h)}}&:= \frac{1}{\nsamples} \sum_{\nsamplesrun=1}^{\nsamples} \left(\vec{\lnrfrv}^{(\nsamplesrun;h)}-\bar{\vec{\lnrfrv}}^{(h)}\right)\otimes \left(\vec{\lnrfrv}^{(\nsamplesrun;h)}-\bar{\vec{\lnrfrv}}^{(h)}\right)\in \R^{\dimVh \times \dimVh}.
\end{align}
We know from \cite[Lemma 1]{cai2016} that the  direct covariance estimator \eqref{astest2} suffers from the curse of dimension with respect to high spatial resolution $\dimVh$. Consequently, we need a better estimator when $\dimVh$ is large. To this end, we follow \cite{cai2016} and assume an off diagonal decay of the covariance matrix $\vec{\Sigma}_{\vec{\lnrfrv}^{(h)}} \in \R^{\dimVh \times \dimVh}$, i.e., we assume  $\vec{\Sigma}_{\vec{\lnrfrv}^{(h)}} \in \decayclass_{\alpha}=\decayclass_{\alpha}(C_{\decayclass;1},C_{\decayclass;2})$, where
\begin{align}\label{taperingclass}
\decayclass_{\alpha}:=\left\{\vec{\Sigma} \in \R^{\dimVh \times \dimVh} \ : \ \max_{\dimVhrun=1,\dots,\dimVh}\sum_{\genfrac{}{}{0pt}{}{\dimVhrun^{\prime}=1}{\left|\dimVhrun^{\prime}-\dimVhrun \right|>c }}^{\dimVh}\left|\vec{\Sigma}_{\dimVhrun,\dimVhrun^{\prime}}\right| \le C_{\decayclass;1} c^{-\alpha}\text{ for all } 1\le c \le \dimVh \text{  and  } \lambda_{\max}(\vec{\Sigma})\le C_{\decayclass;2}\right\}.
\end{align}
Here, $\lambda_{\max}(\vec{\Sigma})$ is the maximum eigenvalue of $\vec{\Sigma}$, and $\alpha$ modulates the speed of decay while $C_{\decayclass;1}$ and $C_{\decayclass;2}$ are positive parameters.
Now, let  weights for any even integer $1\le \taperingint \le \dimVh$ (see \cite[(Eq. 5)]{cai2010}) be given as
\begin{align}\label{taperingweights}
\taperingweight_{\dimVhrun,\dimVhrun^{\prime}}:=\taperingweight_{\dimVhrun,\dimVhrun^{\prime}}(\tau):=
\begin{cases}1, & \left|\dimVhrun-\dimVhrun^{\prime} \right|\le \frac{\taperingint}{2}, \\
2\left(1-\frac{\left|\dimVhrun-\dimVhrun^{\prime} \right|}{\taperingint} \right), &  \frac{\taperingint}{2} < \left|\dimVhrun-\dimVhrun^{\prime} \right|<\taperingint ,\\
0, & \left|\dimVhrun-\dimVhrun^{\prime} \right|\ge \taperingint.
 \end{cases}
\end{align}
Then the associated tapering estimator (c.f. \cite[(Eq. 4)]{cai2010}) is defined element-wise by
\begin{align}
\label{taperingestimator}
\left(\vec{\Sigma}^{(h;\nsamples;\tau)}_{\vec{\lnrfrv}^{(h)}}\right)_{_{\dimVhrun,\dimVhrun^{\prime}}}:=\taperingweight_{\dimVhrun,\dimVhrun^{\prime}}(\tau)\left(\bar{\vec{\Sigma}}^{h;\nsamples}_{\vec{\lnrfrv}^{(h)}}\right)_{\dimVhrun,\dimVhrun^{\prime}}, \quad 1 \le \dimVhrun,\dimVhrun^{\prime} \le \dimVh.
\end{align}
Note that the estimator $\vec{\Sigma}^{(h;\nsamples;\tau)}_{\vec{\lnrfrv}^{(h)}}$ is self-adjoint. Moreover the bounds
\begin{align}
\expect{\left\|\vec{\Sigma}^{(h;\nsamples;\tau)}_{\vec{\lnrfrv}^{(h)}}-\expect{\vec{\Sigma}^{(h;\nsamples;\tau)}_{\vec{\lnrfrv}^{(h)}}} \right\|_{2 \to 2}^2}&\lesssim \frac{\taperingint+\log(\dimVh)}{\nsamples}\label{taperingerrorvar},\\
\left\|\expect{\vec{\Sigma}^{(h;\nsamples;\tau)}_{\vec{\lnrfrv}^{(h)}}}-\vec{\Sigma}_{\vec{\lnrfrv}^{(h)}} \right\|_{2\to 2}^2&\lesssim \taperingint^{-2\alpha} \label{taperingerrorbias}
\end{align}
on the variance and the bias hold, see \cite[(Eqs. 13 \& 14)]{cai2010}.
By the triangle inequality we deduce the convergence of $\vec{\Sigma}^{(h;\nsamples;\tau)}_{\vec{\lnrfrv}^{(h)}}\to \vec{\Sigma}_{\vec{\lnrfrv}^{(h)}}$ in expectation as
\begin{align}\label{taperingerrorbiasplusvar}
\expect{\left\|\vec{\Sigma}^{(h;\nsamples;\tau)}_{\vec{\lnrfrv}^{(h)}}-\vec{\Sigma}_{\vec{\lnrfrv}^{(h)}}\right\|_{2\to 2}^2} &\le 2 \left(\left\|\expect{\vec{\Sigma}^{(h;\nsamples;\tau)}_{\vec{\lnrfrv}^{(h)}}}-\vec{\Sigma}_{\vec{\lnrfrv}^{(h)}} \right\|_{2\to 2}^2 +\expect{\left\|\vec{\Sigma}^{(h;\nsamples;\tau)}_{\vec{\lnrfrv}^{(h)}}-\expect{\vec{\Sigma}^{(h;\nsamples;\tau)}_{\vec{\lnrfrv}^{(h)}}} \right\|_{2\to 2}^2}\right)\nonumber \\&\lesssim \frac{\taperingint+\log(\dimVh)}{\nsamples}+\taperingint^{-2\alpha},
\end{align}
which motivates the choice of the tapering parameter $\tau=\nsamples^{\frac{1}{2\alpha+1}}$, c.f. \cite[(Eq. 15)]{cai2010}.
This gives the following result, c.f. \cite[Theorem 2 \& (Eq. 31)]{cai2010}.

\begin{proposition}\label{propsamplingexpec}
For the tapering estimator \eqref{taperingestimator} with $\tau=\nsamples^{\frac{1}{2\alpha+1}}$, i.e.,
\begin{align}\label{optimaltaperingchoice}
	\left(\vec{\Sigma}^{(h;\nsamples)}_{\vec{\lnrfrv}^{(h)}}\right)_{_{\dimVhrun,\dimVhrun^{\prime}}}:=\taperingweight_{\dimVhrun,\dimVhrun^{\prime}}(\nsamples^{\frac{1}{2\alpha+1}} )\left(\bar{\vec{\Sigma}}^{(h;\nsamples)}_{\vec{\lnrfrv}^{(h)}}\right)_{\dimVhrun,\dimVhrun^{\prime}}, \quad 1 \le \dimVhrun,\dimVhrun^{\prime} \le \dimVh,
\end{align}
there holds the error estimate 
\begin{align}\label{taperingerrorbiasplusvaroptimal}
\expect{\left\|\vec{\Sigma}^{(h;\nsamples)}_{\vec{\lnrfrv}^{(h)}}-\vec{\Sigma}_{\vec{\lnrfrv}^{(h)}}\right\|_{2\to 2}^2} \lesssim \nsamples^{-\frac{2\alpha}{2\alpha+1}} + \frac{\log(\dimVh)}{\nsamples}
\end{align}
if the condition $\dimVh \ge  \nsamples^{\frac{1}{2\alpha+1}}$ is satisfied.
In the case  $\dimVh < \nsamples^{\frac{1}{2\alpha+1}}$, we can directly use the estimator from \eqref{astest2}
\begin{align*}
\bar{\vec{\Sigma}}^{(h;\nsamples)}_{\vec{\lnrfrv}^{(h)}}&:= \frac{1}{\nsamples} \sum_{\nsamplesrun=1}^{\nsamples} \left(\vec{\lnrfrv}^{(\nsamplesrun;h)}-\bar{\vec{\lnrfrv}}^{(h)}\right)\otimes \left(\vec{\lnrfrv}^{(\nsamplesrun;h)}-\bar{\vec{\lnrfrv}}^{(h)}\right)\in \R^{\dimVh \times \dimVh}
\end{align*}
and obtain the bound
\begin{align}\label{taperingerrorbiasplusvaroptimal2}
\expect{\left\|\bar{\vec{\Sigma}}^{(h;\nsamples)}_{\vec{\lnrfrv}^{(h)}}-\vec{\Sigma}_{\vec{\lnrfrv}^{(h)}}\right\|_{2\to 2}^2} \lesssim \frac{\dimVh }{\nsamples}.
\end{align}
\end{proposition}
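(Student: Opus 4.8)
The plan is to obtain the estimate as a direct corollary of the bias--variance decomposition \eqref{taperingerrorbiasplusvar}, whose two ingredients \eqref{taperingerrorvar} and \eqref{taperingerrorbias} are quoted from \cite{cai2010}. No new probabilistic input is required beyond the sub-Gaussian tail bound established in Section \ref{sec:tails}; the proof amounts to optimizing the tapering parameter $\taperingint$ and distinguishing the two regimes according to the relative size of $\dimVh$ and $\nsamples$.

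First I would check that the proposed choice $\taperingint=\nsamples^{\frac{1}{2\alpha+1}}$ is admissible, i.e.\ that $1\le \taperingint \le \dimVh$, as required by the definition \eqref{taperingweights} of the tapering weights. The lower bound holds trivially for $\nsamples\ge 1$, whereas the upper bound $\taperingint\le\dimVh$ is precisely the standing hypothesis $\dimVh\ge \nsamples^{\frac{1}{2\alpha+1}}$ of the first case. Since the weights are only defined for even integers, one rounds $\taperingint$ to the nearest even integer not exceeding $\dimVh$; this modifies the subsequent bounds only by an absolute constant and is therefore harmless.

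Next I would substitute this choice into \eqref{taperingerrorbiasplusvar}. The bias term becomes $\taperingint^{-2\alpha}=\nsamples^{-\frac{2\alpha}{2\alpha+1}}$, while the variance term splits into $\frac{\taperingint}{\nsamples}=\nsamples^{\frac{1}{2\alpha+1}-1}=\nsamples^{-\frac{2\alpha}{2\alpha+1}}$ and the residual $\frac{\log(\dimVh)}{\nsamples}$. Summing the three contributions and absorbing the two identical terms $\nsamples^{-\frac{2\alpha}{2\alpha+1}}$ into one yields exactly \eqref{taperingerrorbiasplusvaroptimal}. For the complementary regime $\dimVh<\nsamples^{\frac{1}{2\alpha+1}}$, where tapering is no longer advantageous, I would instead invoke the classical operator-norm bound for the sample covariance matrix of sub-Gaussian vectors, namely \cite[Theorem 2 and (Eq. 31)]{cai2010}, which gives $\expect{\left\|\bar{\vec{\Sigma}}^{(h;\nsamples)}_{\vec{\lnrfrv}^{(h)}}-\vec{\Sigma}_{\vec{\lnrfrv}^{(h)}}\right\|_{2\to 2}^2}\lesssim \dimVh/\nsamples$ and hence \eqref{taperingerrorbiasplusvaroptimal2}; the sub-Gaussian hypothesis it requires is supplied verbatim by Section \ref{sec:tails}.

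Because the two delicate estimates \eqref{taperingerrorvar} and \eqref{taperingerrorbias} are imported wholesale from \cite{cai2010}, there is no genuinely hard analytical step here. The only point deserving care---rather than a mechanical substitution---is the admissibility and integer/parity adjustment of $\taperingint$ together with the clean matching of the two parameter regimes; getting the exponent arithmetic $\frac{1}{2\alpha+1}-1=-\frac{2\alpha}{2\alpha+1}$ right is what makes the variance contribution balance the bias contribution and produce the stated optimal rate.
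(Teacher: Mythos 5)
Your proposal is correct and follows essentially the same route as the paper, which states this proposition as a direct consequence of the bias--variance bound \eqref{taperingerrorbiasplusvar} evaluated at $\taperingint=\nsamples^{\frac{1}{2\alpha+1}}$ and of the sample-covariance bound cited from \cite[Theorem 2 \& (Eq. 31)]{cai2010} in the regime $\dimVh<\nsamples^{\frac{1}{2\alpha+1}}$. Your additional check that $\taperingint$ is admissible (namely $1\le\taperingint\le\dimVh$, with the harmless even-integer rounding) is a point the paper leaves implicit, but otherwise the two arguments coincide.
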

\noindent Note that the result is in fact \emph{rate optimal}.

We introduce the notation
\begin{align}\label{rhoM}
\rho_{h}(\nsamples)&:=\begin{cases}\nsamples^{-\frac{2\alpha}{2\alpha+1}} + \frac{\log(\dimVh)}{\nsamples}, &  \dimVh \ge \nsamples^{\frac{1}{2\alpha+1}} \\  \frac{\dimVh }{\nsamples}, & \dimVh < \nsamples^{\frac{1}{2\alpha+1}} \end{cases} \\ \label{tilderhoM}& \lesssim \tilde{\rho}_{h}(\nsamples):=\begin{cases}\nsamples^{-\frac{2\alpha}{2\alpha+1}} + \frac{d\log(h^{-1})}{\nsamples}, &  \dimVh \ge \nsamples^{\frac{1}{2\alpha+1}} \\  \frac{h^{-d} }{\nsamples}, & \dimVh < \nsamples^{\frac{1}{2\alpha+1}} \end{cases} ,
\end{align}
using $\dimVh\sim s^d h^{-d}$ for the definition of $\tilde{\rho}_{h}(\nsamples)$.
Then, we obtain the following corollary.
\begin{corollary}\label{corboundtildeS}
For the tapering estimator \eqref{taperingestimator} with $\tau=\nsamples^{\frac{1}{2\alpha+1}}$ the bounds
\begin{align}\label{errorexpec}
\expect{ \left\|\tildstiffnessmatrix-\tildestiffnessmatrixM \right\|_{2 \to 2}} &\lesssim \rho^{\frac{1}{2}}_{h}(\nsamples) ~ \lambda_{\max}\left(\massmatrix\right),\\
\label{errorexpec2}
\expect{ \left\|\tildstiffnessmatrix-\tildestiffnessmatrixM \right\|^2_{2 \to 2}}& = \expect{ \left\| \left(\vec{\Sigma}^{(h;\nsamples)}_{\vec{\lnrfrv}^{(h)}}-\vec{\Sigma}_{\vec{\lnrfrv}^{(h)}} \right)  \right\|^2_{2 \to 2}}\lambda_{\max}^2 \left(\massmatrix\right) \lesssim   \rho_{h}(\nsamples)~\lambda_{\max}^2\left(\massmatrix\right) 
\end{align}
hold.
\end{corollary}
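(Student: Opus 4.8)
The plan is to obtain both bounds by combining the purely algebraic norm-equivalence estimate \eqref{lieqneu} with the sampling rate already established in Proposition \ref{propsamplingexpec}. First I would take the right-hand inequality in \eqref{lieqneu},
\[
\left\| \tildstiffnessmatrix-\tildestiffnessmatrixM \right\|_{2\to 2} \le \lambda_{\max}\left(\massmatrix\right)\left\| \vec{\Sigma}_{\vec{\lnrfrv}^{(h)}}- \vec{\Sigma}^{(h;\nsamples)}_{\vec{\lnrfrv}^{(h)}}  \right\|_{2\to 2},
\]
and observe that it holds for every realization of the samples, since $\lambda_{\max}(\massmatrix)$ is a deterministic quantity that does not depend on the sampling. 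Squaring and taking expectations then lets me pull the deterministic factor out of the expectation, yielding
\[
\expect{\left\| \tildstiffnessmatrix-\tildestiffnessmatrixM \right\|^2_{2\to 2}} \le \lambda_{\max}^2\left(\massmatrix\right)\,\expect{\left\| \vec{\Sigma}_{\vec{\lnrfrv}^{(h)}}- \vec{\Sigma}^{(h;\nsamples)}_{\vec{\lnrfrv}^{(h)}}  \right\|^2_{2\to 2}}.
\]

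Next I would invoke Proposition \ref{propsamplingexpec}. By the very definition of $\rho_{h}(\nsamples)$ in \eqref{rhoM}, the two regimes $\dimVh \ge \nsamples^{1/(2\alpha+1)}$ and $\dimVh < \nsamples^{1/(2\alpha+1)}$ are handled by \eqref{taperingerrorbiasplusvaroptimal} and \eqref{taperingerrorbiasplusvaroptimal2}, respectively, so that in either case $\expect{\left\| \vec{\Sigma}_{\vec{\lnrfrv}^{(h)}}- \vec{\Sigma}^{(h;\nsamples)}_{\vec{\lnrfrv}^{(h)}} \right\|^2_{2\to 2}} \lesssim \rho_{h}(\nsamples)$. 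Substituting this into the previous display gives \eqref{errorexpec2}.

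Finally, I would deduce the first-moment bound \eqref{errorexpec} from \eqref{errorexpec2} by Jensen's inequality (equivalently Cauchy--Schwarz) applied to the concave square root: for the nonnegative random variable $Y := \left\| \tildstiffnessmatrix-\tildestiffnessmatrixM \right\|_{2\to 2}$ one has $\expect{Y} \le \left(\expect{Y^2}\right)^{1/2}$, and taking square roots in \eqref{errorexpec2} immediately produces $\expect{Y} \lesssim \rho^{\frac{1}{2}}_{h}(\nsamples)\,\lambda_{\max}(\massmatrix)$. There is essentially no hard step here, as the corollary is a direct assembly of the deterministic inequality \eqref{lieqneu} and the already-proved statistical rate; the only points requiring minor care are to note that $\lambda_{\max}(\massmatrix)$ factors cleanly out of the expectation because it is sample-independent, and to check that the case distinction in the definition of $\rho_{h}(\nsamples)$ lines up exactly with the two estimator choices in Proposition \ref{propsamplingexpec}.
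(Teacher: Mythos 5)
Your proposal is correct and follows essentially the same route as the paper's proof: the deterministic inequality \eqref{lieqneu}, the sampling rate from Proposition \ref{propsamplingexpec} covering both regimes in the definition \eqref{rhoM} of $\rho_{h}(\nsamples)$, and Jensen's inequality to pass between first and second moments. The only (immaterial) difference is the order of presentation: you establish \eqref{errorexpec2} first and deduce \eqref{errorexpec} from it via $\expect{Y}\le\left(\expect{Y^2}\right)^{1/2}$, whereas the paper applies Jensen with the convex function $x\mapsto x^2$ to obtain \eqref{errorexpec} directly and then states \eqref{errorexpec2} separately.
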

\begin{proof}
	First, we obtain by \eqref{lieqneu} 
\begin{align}\label{eq:zzz1}
 \left\|\tildstiffnessmatrix-\tildestiffnessmatrixM \right\|_{2 \to 2}  \le \lambda_{\max}\left(\massmatrix\right) 
	\left\| \vec{\Sigma}^{(h;\nsamples)}_{\vec{\lnrfrv}^{(h)}}-\vec{\Sigma}_{\vec{\lnrfrv}^{(h)}}  \right\|_{2 \to 2} .
\end{align}
We can apply Jensen's inequality for the convex function $\psi(x)=x^2$,
\begin{align*}
\psi\left( \expect{X}\right) \le \expect{\psi(X)}.
\end{align*}
Plugging \eqref{eq:zzz1} into Jensen's inequality  yields
\begin{align*}
\left(\expect{\left\|\tildstiffnessmatrix-\tildestiffnessmatrixM \right\|_{2 \to 2} }\right)^2 \le 
\expect{ \left\| \left(\vec{\Sigma}^{(h;\nsamples)}_{\vec{\lnrfrv}^{(h)}}-\vec{\Sigma}_{\vec{\lnrfrv}^{(h)}} \right)  \right\|^2_{2 \to 2} } \lambda_{\max}^2 \left(\massmatrix\right) .
\end{align*}
Thus, we get from Proposition \ref{propsamplingexpec} and with the definition \eqref{rhoM} the bound
\begin{align*}
\expect{ \left\|\tildstiffnessmatrix-\tildestiffnessmatrixM \right\|_{2 \to 2}} \lesssim \rho^{\frac{1}{2}}_{h}(\nsamples)~\lambda_{\max}\left(\massmatrix\right) .
\end{align*}
Moreover, we have 
\begin{align*}
\expect{ \left\|\tildstiffnessmatrix-\tildestiffnessmatrixM \right\|^2_{2 \to 2}} \le \lambda_{\max}^2 \left(\massmatrix\right)  \expect{ \left\| \left(\vec{\Sigma}^{(h;\nsamples)}_{\vec{\lnrfrv}^{(h)}}-\vec{\Sigma}_{\vec{\lnrfrv}^{(h)}} \right)  \right\|^2_{2 \to 2}} \lesssim   \rho_{h}(\nsamples)~\lambda_{\max}^2 \left(\massmatrix\right) .
\end{align*}
\end{proof}
Next, we need to verify that Assumption \eqref{condspectralgap1}, i.e.,
\begin{align*}
\delta_{\kllevelrun}\ge 4C_1h^{2s} \lambda_{\kllevelrun+1}^{-1} +4\left\|\tildstiffnessmatrix-\tildestiffnessmatrixM \right\|_{2 \to 2} \quad \text{for all } 1\le \kllevelrun \le \kllevel,
\end{align*}
holds at least with high probability. To this end, we have the following result:
\begin{proposition}\label{prop:spectralgap}
Let the mesh size $h$ be sufficiently small and $\nsamples$ be sufficiently large such that, for all $1\le \kllevelrun \le \kllevel$,
\begin{align}\label{condspectralgapfinal1}
  \frac{\delta_{\kllevelrun}}{2}\ge 4C_1 h^{2s} \lambda_{\kllevel+1}^{-1}  \quad \text{and} \quad
  C \nsamples^{-\frac{2\alpha}{2\alpha+1}} \le \frac{\min_{1\le \kllevelrun \le \kllevel }\delta_{\kllevelrun}}{16 ~ \lambda_{\max}\left(\massmatrix\right) }
\end{align}
holds.
Then we have with probability
\begin{align}\label{eq:ppp}
	p_0:=1- 2 \dimVh 5^{\taperingint}\exp\left(- \nsamples  \rho_1  \left( \frac{\min_{1\le \kllevelrun \le \kllevel }\delta_{\kllevelrun}}{48~\lambda_{\max}\left(\massmatrix\right) } \right)^2\right) \quad \text{where} \quad \tau = \nsamples^{\frac{1}{2\alpha+1}}
\end{align}
that the condition \eqref{condspectralgap1} in our spectral gap assumption \ref{ass:spectralgap} is satisfied.
\end{proposition}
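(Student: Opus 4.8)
The plan is to verify the spectral gap condition \eqref{condspectralgap1} by bounding its two summands, the deterministic finite element term $4C_1 h^{2s}\lambda_{\kllevelrun+1}^{-1}$ and the stochastic sampling term $4\left\|\tildstiffnessmatrix-\tildestiffnessmatrixM\right\|_{2\to2}$, each against one half of $\delta_{\kllevelrun}$. First I would dispose of the deterministic term: since the eigenvalues are ordered non-increasingly, $\lambda_{\kllevelrun+1}^{-1}\le\lambda_{\kllevel+1}^{-1}$ for every $1\le\kllevelrun\le\kllevel$, so the first inequality in \eqref{condspectralgapfinal1} yields $4C_1 h^{2s}\lambda_{\kllevelrun+1}^{-1}\le\tfrac12\delta_{\kllevelrun}$ for all such $\kllevelrun$ at once. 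It then remains to control the sampling term, and since $\left\|\tildstiffnessmatrix-\tildestiffnessmatrixM\right\|_{2\to2}$ does not depend on $\kllevelrun$, it suffices to show that with probability at least $p_0$ one has $\left\|\tildstiffnessmatrix-\tildestiffnessmatrixM\right\|_{2\to2}\le\tfrac18\min_{1\le\kllevelrun\le\kllevel}\delta_{\kllevelrun}$.

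By the upper bound in \eqref{lieqneu} this is implied by $\left\|\vec{\Sigma}_{\vec{\lnrfrv}^{(h)}}-\vec{\Sigma}^{(h;\nsamples)}_{\vec{\lnrfrv}^{(h)}}\right\|_{2\to2}\le\tfrac18\min_{\kllevelrun}\delta_{\kllevelrun}\,\lambda_{\max}^{-1}(\massmatrix)$, so the whole proposition reduces to one high-probability bound on the covariance estimation error in the operator norm. I would split this error into bias and fluctuation, $\left\|\vec{\Sigma}_{\vec{\lnrfrv}^{(h)}}-\vec{\Sigma}^{(h;\nsamples)}_{\vec{\lnrfrv}^{(h)}}\right\|_{2\to2}\le\left\|\expect{\vec{\Sigma}^{(h;\nsamples)}_{\vec{\lnrfrv}^{(h)}}}-\vec{\Sigma}_{\vec{\lnrfrv}^{(h)}}\right\|_{2\to2}+\left\|\vec{\Sigma}^{(h;\nsamples)}_{\vec{\lnrfrv}^{(h)}}-\expect{\vec{\Sigma}^{(h;\nsamples)}_{\vec{\lnrfrv}^{(h)}}}\right\|_{2\to2}$. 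The bias term is deterministic and is estimated via \eqref{taperingerrorbias} with $\taperingint=\nsamples^{1/(2\alpha+1)}$; the second inequality in \eqref{condspectralgapfinal1} is imposed precisely to keep this bias below a fixed fraction (half the target) of $\min_{\kllevelrun}\delta_{\kllevelrun}\,\lambda_{\max}^{-1}(\massmatrix)$. Hence only the fluctuation term must be controlled, by the same quantity, with high probability.

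The concentration estimate for the fluctuation is the heart of the argument and the main obstacle. The key observation is that, owing to the tapering weights \eqref{taperingweights}, the centered matrix $\vec{\Sigma}^{(h;\nsamples)}_{\vec{\lnrfrv}^{(h)}}-\expect{\vec{\Sigma}^{(h;\nsamples)}_{\vec{\lnrfrv}^{(h)}}}$ is banded with bandwidth $\taperingint$. Its operator norm can therefore be estimated on an $\varepsilon$-net of the unit sphere adapted to this band structure: a net of cardinality at most $5^{\taperingint}$ over each of the $\sim\dimVh$ diagonal bands, which produces the prefactor $2\dimVh 5^{\taperingint}$ in \eqref{eq:ppp}, while the net inequality contributes the numerical factor $3$ that relates $\left\|\,\cdot\,\right\|_{2\to2}$ to $\max_{\vec u}\bigl|\vec u^{\transpose}(\vec{\Sigma}^{(h;\nsamples)}_{\vec{\lnrfrv}^{(h)}}-\expect{\vec{\Sigma}^{(h;\nsamples)}_{\vec{\lnrfrv}^{(h)}}})\vec u\bigr|$ over the net. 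For each fixed net point $\vec u$ this is a centered quadratic form in the sub-Gaussian vector $\vec{\lnrfrv}^{(h)}$, whose tails were established in Section \ref{sec:tails}; a Chernoff/Bernstein estimate in the spirit of \cite{cai2010} then gives $\prob\{|\vec u^{\transpose}(\cdot)\vec u|>t\}\le 2\exp(-\nsamples\rho_1 t^2)$ with $\rho_1$ the sub-Gaussian parameter. Choosing the threshold $t=\min_{\kllevelrun}\delta_{\kllevelrun}/\bigl(48\,\lambda_{\max}(\massmatrix)\bigr)$ and taking a union bound over the net yields, with probability at least the $p_0$ of \eqref{eq:ppp}, that the fluctuation is at most $3t=\tfrac{1}{16}\min_{\kllevelrun}\delta_{\kllevelrun}\,\lambda_{\max}^{-1}(\massmatrix)$. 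Adding this to the bias bound and tracing the reductions back through \eqref{lieqneu} establishes \eqref{condspectralgap1} for all $1\le\kllevelrun\le\kllevel$ on this event, i.e. the assertion of Assumption \ref{ass:spectralgap}. The delicate point throughout is keeping the net small by exploiting bandedness, so that the entropy factor is $5^{\taperingint}$ rather than $5^{\dimVh}$; this is exactly what makes the exponent in $p_0$ useful once $\taperingint=\nsamples^{1/(2\alpha+1)}$ grows only sublinearly in $\nsamples$.
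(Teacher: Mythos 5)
Your proposal is correct and follows essentially the same route as the paper: reduce via \eqref{lieqneu} to a bound on $\bigl\|\vec{\Sigma}_{\vec{\lnrfrv}^{(h)}}-\vec{\Sigma}^{(h;\nsamples)}_{\vec{\lnrfrv}^{(h)}}\bigr\|_{2\to 2}$, absorb the finite element term and the tapering bias using the two halves of \eqref{condspectralgapfinal1}, and control the fluctuation term with the concentration bound carrying the prefactor $2\dimVh 5^{\taperingint}$ and threshold $\min_{\kllevelrun}\delta_{\kllevelrun}/(48\,\lambda_{\max}(\massmatrix))$. The only difference is presentational: the paper simply cites this concentration inequality \eqref{rho1def} from \cite{cai2010} (noting it holds only for $0<t<\rho_1$, a restriction you should also record), whereas you sketch its proof via bandedness, an $\varepsilon$-net of size $5^{\taperingint}$ per band, and sub-Gaussian quadratic-form tails — which is exactly the argument behind the cited lemmata.
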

\begin{proof}
Recall \eqref{lieqneu}, i.e.,
\begin{align*}
 \left\|\tildstiffnessmatrix-\tildestiffnessmatrixM \right\|_{2 \to 2} \le \lambda_{\max}\left(\massmatrix\right) 
	\left\| \vec{\Sigma}^{(h;\nsamples)}_{\vec{\lnrfrv}^{(h)}}-\vec{\Sigma}_{\vec{\lnrfrv}^{(h)}}  \right\|_{2 \to 2} .
\end{align*}
Thus, it is sufficient to ensure 
\begin{align*}
\delta_{\kllevelrun}\ge 4C_1h^{2s} \lambda_{\kllevelrun+1}^{-1} +4 \left\| \vec{\Sigma}^{(h;\nsamples)}_{\vec{\lnrfrv}^{(h)}}-\vec{\Sigma}_{\vec{\lnrfrv}^{(h)}}   \right\|_{2 \to 2} \lambda_{\max}\left(\massmatrix\right)   \quad \text{for all } 1\le \kllevelrun \le \kllevel.
\end{align*}
Condition \eqref{condspectralgapfinal1} reduces this to 
\begin{align*}
\frac{\min_{1\le \kllevelrun \le \kllevel }\delta_{\kllevelrun}}{8~ \lambda_{\max}\left(\massmatrix\right)  }\ge   \left\| \vec{\Sigma}^{(h;\nsamples)}_{\vec{\lnrfrv}^{(h)}}-\vec{\Sigma}_{\vec{\lnrfrv}^{(h)}}   \right\|_{2 \to 2}.
\end{align*}
We aim to show that this inequality is only violated with small probability.
To this end, we derive by 
repeating the arguments of \cite[Lemmata 2 \& 3]{cai2010} and \cite[Remark 1]{cai2010} the bound
\begin{align}\label{rho1def}
\prob\left\{ \left\|\vec{\Sigma}^{(h;\nsamples)}_{\vec{\lnrfrv}^{(h)}}-\expect{\vec{\Sigma}^{(h;\nsamples)}_{\vec{\lnrfrv}^{(h)}}}\right\|_{2\to 2}>t \right\} \le 2 \dimVh 5^{\taperingint}\exp\left(-\frac{1}{9}\nsamples t^2 \rho_1 \right) \quad \text{for all } 0< t<\rho_1,
\end{align}
where $\rho_1$ is a constant, see \cite[page 2142]{cai2010}, and $\tau = \nsamples^{\frac{1}{2\alpha+1}}$.
Thus, we obtain with \eqref{taperingerrorbias} and by setting $\tau=\nsamples^{\frac{1}{2\alpha+1}}$ the bound
\begin{align}\label{covboundinprobab}
\prob\left\{ \left\|\vec{\Sigma}^{(h;\nsamples)}_{\vec{\lnrfrv}^{(h)}}-\vec{\Sigma}_{\vec{\lnrfrv}^{(h)}} \right\|_{2\to 2} >t \right\}\nonumber &\le \prob\left\{ \left\|\vec{\Sigma}^{(h;\nsamples)}_{\vec{\lnrfrv}^{(h)}}-\expect{\vec{\Sigma}^{(h;\nsamples)}_{\vec{\lnrfrv}^{(h)}}}\right\|_{2\to 2}+\left\|\expect{\vec{\Sigma}^{(h;\nsamples)}_{\vec{\lnrfrv}^{(h)}}}-\vec{\Sigma}_{\vec{\lnrfrv}^{(h)}} \right\|_{2\to 2} >t \right\}\nonumber \\
&\le \prob\left\{
\left\|\vec{\Sigma}^{(h;\nsamples)}_{\vec{\lnrfrv}^{(h)}}-\expect{\vec{\Sigma}^{(h;\nsamples)}_{\vec{\lnrfrv}^{(h)}}}\right\|_{2\to 2}+C \nsamples^{-\frac{2\alpha}{2\alpha+1}} >t \right\}. 
\end{align}
Now, let $\bar{t} \in (0,\rho_1) $ be fixed and let $\nsamples$ be large enough such that $C \nsamples^{-\frac{2\alpha}{2\alpha+1}} \le \frac{\bar{t}}{2}$. Then, it holds that

\begin{align}\label{covboundinprobab2}
\prob\left\{ \left\|\vec{\Sigma}^{(h;\nsamples)}_{\vec{\lnrfrv}^{(h)}}-\vec{\Sigma}_{\vec{\lnrfrv}^{(h)}} \right\|_{2\to 2} >\bar{t} \right\}&\le 
\prob\left\{
\left\|\vec{\Sigma}^{(h;\nsamples)}_{\vec{\lnrfrv}^{(h)}}-\expect{\vec{\Sigma}^{(h;\nsamples)}_{\vec{\lnrfrv}^{(h)}}}\right\|_{2\to 2} > \frac{\bar{t}}{2} \right\} \le 2 \dimVh 5^{\taperingint}\exp\left(-\frac{1}{36}\nsamples \bar{t}^2 \rho_1 \right). 
\end{align}
Now, we set $\bar{t}:=\frac{1}{8} \min_{1\le \kllevelrun \le \kllevel }\delta_{\kllevelrun}$  and observe that the second inequality in condition \eqref{condspectralgapfinal1} ensures 
$C \nsamples^{-\frac{2\alpha}{2\alpha+1}} \le\frac{1}{2}\bar{t} =\frac{1}{16~ \lambda_{\max}\left(\massmatrix\right)  }\min_{1\le  \kllevelrun \le \kllevel }\delta_{\kllevelrun}$. 
Thus, we get 
\begin{align}\label{covboundinprobab3}
\prob\left\{ \left\|\vec{\Sigma}^{(h;\nsamples)}_{\vec{\lnrfrv}^{(h)}}-\vec{\Sigma}_{\vec{\lnrfrv}^{(h)}} \right\|_{2\to 2} > \frac{\min_{1\le \kllevelrun \le \kllevel }\delta_{\kllevelrun}}{8 }\right\}&\le 
\prob\left\{
\left\|\vec{\Sigma}^{(h;\nsamples)}_{\vec{\lnrfrv}^{(h)}}-\expect{\vec{\Sigma}^{(h;\nsamples)}_{\vec{\lnrfrv}^{(h)}}}\right\|_{2\to 2} > \frac{\bar{t}}{2} \right\} \nonumber \\ &
\le 2 \dimVh 5^{\taperingint}\exp\left(- \nsamples  \rho_1  {\left( \frac{\min_{1\le \kllevelrun \le \kllevel }\delta_{\kllevelrun}}{48 ~ \lambda_{\max}\left(\massmatrix\right)  } \right)^{2}}\right),
\end{align}
using $36*8^2=9*2^8$ and thus $\sqrt{36*8^2}=3*2^4=48$.
This proves the claim.
\end{proof}

This establishes a further central ingredient for our later analysis, namely the optimal bound for the sampling error.

\section{Error analysis for covariance operator reconstruction}\label{sec:covoprec}
We are now in the position to derive an overall  error bound for the reconstruction of the continuous covariance operator from finite samples. To this end recall the 
covariance kernel  $R$ in \eqref{carlemannkern}, i.e.,
\begin{align*}
R(\vec{x},\vec{x}^{\prime})=\int_{\Omega}\lnrf (\vec{\om},\vec{x}) \lnrf (\vec {\om}^{\prime},\vec{x}^{\prime})\, \mathrm{d}\prob = \sum_{\kllevelrun=1}^{\infty} \lambda_{\kllevelrun} \phi_{\kllevelrun}(\vec{x}) \phi_{\kllevelrun}(\vec{x}^{\prime}).
\end{align*}
Furthermore let $\phi_{\kllevelrun}^{(h;\nsamples)}$ be given as in \eqref{Sheigenfunc2} and let
$\kllevel\in\mathbb{N}$ be a truncation parameter with $1\le \kllevel \le \dimVh$. We define
\begin{align}\label{mercerh2}
R^{(\kllevel;h;\nsamples)}(\vec{x},\vec{x}^{\prime}) :=\sum_{\kllevelrun=1}^{\kllevel}\lambda^{(h;\nsamples)}_{\kllevelrun}
\phi_{\kllevelrun}^{(h;\nsamples)}(\vec{x})\phi_{\kllevelrun}^{(h;\nsamples)}(\vec{x}^{\prime}).
\end{align}
We are now interested in the approximation error $R-R^{(\kllevel;h;\nsamples)}$. 
To this end, let us define 
\begin{align}\label{deltagl}
G(\kllevel):=\left(\sum_{\kllevelrun=1}^{\kllevel}\left(\frac{ \lambda_{\kllevelrun}} {\delta_{\kllevelrun}} \right)^{2} \right)^{\frac{1}{2}},
\end{align}
which is a function of the truncation parameter $\kllevel$ that is determined by spectral properties of the unknown covariance kernel  $R$  only. \\
However, in special cases, it is possible to compute $G(\kllevel)$ explicitly.
As an example, let us consider  Brownian motion.  In the univariate case we have the kernel
\begin{align}\label{univariateBrownianMotion}
	R^{(1)}_{B}:(0,1)^2 \to \R, \quad R^{(1)}_{B}(x,x^{\prime}):=\min\{x,x^{\prime}\}.
\end{align}
Then it is known that 
\begin{align*}
	\lambda_{R^{(1)}_{B}}(\kllevelrun)= \pi^{-2}\left(\kllevelrun -\frac{1}{2} \right)^{-2}= \left(\frac{2}{\pi}\right)^2 \left(2\ell-1 \right)^{-2} \in \left(\frac{2}{\pi}\right)^2 (2\N-1)^{-2}.
\end{align*}
Thus, we have
\begin{align*}
\delta_{R^{(1)}_{B}}(\kllevelrun):= \frac{2\kllevelrun}{\pi^2(\kllevelrun^2-\frac{1}{4})^2}=\frac{1}{2}\left(\frac{2}{\pi}\right)^2 \frac{\kllevelrun}{\left(\kllevelrun^2 -\frac{1}{4}\right)^{2}}
\end{align*} 
and consequently we obtain
\begin{align*}
	\frac{ \lambda_{R^{(1)}_{B}}(\kllevelrun)}{\delta_{R^{(1)}_{B}}(\kllevelrun)} = \frac{2}{\kllevelrun} \left( \frac{ \kllevelrun^2 -\frac{1}{4}}{2\kllevelrun-1}\right)^{2}
	= 2 \frac{\left(2\kllevelrun+1 \right)^2}{16\kllevelrun}=\frac{\left(2\kllevelrun+1 \right)^2}{8\kllevelrun}.
\end{align*}
Finally, we get
\begin{align*}
	G^2_{R^{(1)}_{B}}(\kllevel)=\sum_{\kllevelrun=1}^{\kllevel}\left(\frac{ \lambda_{R^{(1)}_{B}}(\kllevelrun)} {\delta_{R^{(1)}_{B}}(\kllevelrun)} \right)^{2} = \frac{1}{64}\sum_{\kllevelrun=1}^{\kllevel} \frac{(2\kllevelrun+1)^4}{\kllevelrun^2}\approx \kllevel^{3} \quad \text{for } \kllevel \text{ large.}
\end{align*}
Moreover, for a two-dimensional Brownian motion, we have the eigenvalues
\begin{align*}
\lambda_{R^{(2)}_{B}}(\ell_1,\ell_2):= \lambda_{R^{(1)}_{B}}(\kllevelrun_{1}) \lambda_{R^{(1)}_{B}}(\kllevelrun_{2})=\left(\frac{2}{\pi} \right)^{4} (2\ell_1-1)^{-2}(2\ell_2-1)^{-2}, \quad \ell_1,\ell_2 \in \N.
\end{align*}
Hence, we observe 
\begin{align*}
\text{image}(\lambda_{R^{(2)}_{B}})=\lambda_{R^{(2)}_{B}}(\N \times \N) = \left(\frac{2}{\pi} \right)^{4} (2\N-1)^{-2},
\end{align*}
i.e., the  image of $\lambda_{R^{(2)}_{B}}$ consists up to the pre-factor $\left(\frac{2}{\pi} \right)^{4}$ again of  the squared inverses of the  odd numbers. 
Hence, we can sort the eigenvalues (ignoring multiplicities)  by the ordering in the odd natural numbers. Moreover, it holds that
\begin{align}
\lambda_{R^{(2)}_{B}}(\ell_1,\ell_2)=\lambda_{R^{(1)}_{B}}(1) \lambda_{R^{(1)}_{B}}\left(k(\ell_1,\ell_2)\right), \quad \text{where } k(\ell_1,\ell_2):=\frac{(2\ell_1-1)(2\ell_2-1)+1}{2} .
\end{align}
In the general $d$-dimensional situation, we have 
\begin{align*}
	\lambda_{R^{(d)}_{B}}(\ell_1,\dots,\ell_d)=\prod_{j=1}^{d} \lambda_{R^{(d)}_{B}}(\ell_{j})=\left(\lambda_{R^{(1)}_{B}}(1) \right)^{d-1} \lambda^{(1)}(k(\ell_1,\dots,\ell_{d})),
\end{align*}
where 
\begin{align*}
k(\ell_1,\dots,\ell_{d}):=\frac{1}{2} \left(\prod_{j=1}^{d} (2\ell_j-1) +1\right).
\end{align*}
Furthermore, we have with $\ell:=k(\ell_1,\dots,\ell_{d})$
\begin{align*}
\delta_{R^{(d)}_{B}}(\kllevelrun)=\left(\lambda_{R^{(1)}_{B}}(1) \right)^{d-1}\delta_{R^{(1)}_{B}}(\kllevelrun).
\end{align*}
Thus, for the $d$-dimensional Brownian motion, we get
\begin{align}\label{eq:gl:asymptotic}
G^2_{R^{(d)}_{B}}(\kllevel)&=\sum_{\kllevelrun=1}^{\kllevel}\left(\frac{ \lambda_{R^{(d)}_{B}}(\kllevelrun)} {\delta_{R^{(d)}_{B}}(\kllevelrun)} \right)^{2} = \sum_{\kllevelrun=1}^{\kllevel} \left( \frac{\ \lambda_{R^{(1)}_{B}}(\kllevelrun)}{\delta_{R^{(1)}_{B}}(\kllevelrun)} \right)^{2}=G^2_{R^{(1)}_{B}}(\kllevel)=\frac{1}{64}\sum_{\kllevelrun=1}^{\kllevel} \frac{(2\kllevelrun+1)^4}{\kllevelrun^2}\approx \kllevel^{3} \quad \text{for } \kllevel \text{ large.}
\end{align}
Note here that $G_{R^{d}_{B}}(\kllevel)$ is now completely independent of the dimension.

In the general situation, we have the following result: 
\begin{theorem}\label{finalthm}
Let $\dimVh \ge \kllevel$.
Under the condition of Proposition \ref{prop:spectralgap}, i.e., \eqref{condspectralgapfinal1} and the assumption $C_2 h^{s}  \lambda_{\kllevel}^{-1} \le  1$,  there holds with probability $p_0$ given in \eqref{eq:ppp} that 
\begin{align}\label{eq:final-1}
	\norm{{R}-R^{(\kllevel;h;\nsamples)}}_{L^{2}(\dom\times\dom)}&\lesssim  \kllevel^{-\frac{2s}{d}-\frac{1}{2}}  + \kllevel^{\frac{1}{2}} h^{s}  + \left(   \kllevel^{\frac{1}{2}} + G(\kllevel)\right) \left\|\tildstiffnessmatrix-\tildestiffnessmatrixM \right\|_{2\to 2} 
\end{align}
with $G(\kllevel)$ from \eqref{deltagl}. If we use the assumption $C_2 h^{s}  \lambda_{\kllevel}^{-1} \le  1$ again, we obtain the simplified bound  
\begin{align}\label{eq:final-2}
	\norm{{R}-R^{(\kllevel;h;\nsamples)}}_{L^{2}(\dom\times\dom)}&\lesssim \kllevel^{-\frac{2s}{d}-\frac{1}{2}}+ \left( \kllevel^{\frac{1}{2}} + G(\kllevel) \right) \left\|\tildstiffnessmatrix-\tildestiffnessmatrixM \right\|_{2\to 2}   .
\end{align}
\end{theorem}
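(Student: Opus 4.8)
The plan is to insert the two semidiscrete reference kernels and split the error in $L^{2}(\dom\times\dom)$ into a truncation, a spatial and a sampling contribution. Writing $R^{(\kllevel)}:=\sum_{\kllevelrun=1}^{\kllevel}\lambda_{\kllevelrun}\,\phi_{\kllevelrun}\otimes\phi_{\kllevelrun}$ and recalling $R^{(\kllevel;h)}$ from \eqref{mercerh}, the triangle inequality gives
\begin{align*}
\norm{R-R^{(\kllevel;h;\nsamples)}}_{L^{2}(\dom\times\dom)}
\le \norm{R-R^{(\kllevel)}}_{L^{2}(\dom\times\dom)}
+\norm{R^{(\kllevel)}-R^{(\kllevel;h)}}_{L^{2}(\dom\times\dom)}
+\norm{R^{(\kllevel;h)}-R^{(\kllevel;h;\nsamples)}}_{L^{2}(\dom\times\dom)}.
\end{align*}
The whole argument rests on the tensor identity $\langle \phi\otimes\phi',\psi\otimes\psi'\rangle_{L^{2}(\dom\times\dom)}=(\phi,\psi)_{\dom}(\phi',\psi')_{\dom}$, so that a family $\{\chi_{\kllevelrun}\otimes\chi_{\kllevelrun}\}$ built from an $L^{2}(\dom)$-orthonormal system $\{\chi_{\kllevelrun}\}$ is again orthonormal in $L^{2}(\dom\times\dom)$. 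The event on which \eqref{condspectralgap1} holds has probability $p_0$ by Proposition \ref{prop:spectralgap}; all the estimates below are carried out on this event, where \eqref{weyleigenvalue2} and Theorem \ref{theoremeigenfucntions} are available.

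For the truncation term, orthonormality of $\{\phi_{\kllevelrun}\otimes\phi_{\kllevelrun}\}$ yields $\norm{R-R^{(\kllevel)}}_{L^{2}(\dom\times\dom)}^{2}=\sum_{\kllevelrun>\kllevel}\lambda_{\kllevelrun}^{2}$, and \eqref{thm:truncationErrorLambda} gives $\sum_{\kllevelrun>\kllevel}\lambda_{\kllevelrun}^{2}\lesssim \kllevel^{-4s/d-1}$, i.e.\ the rate $\kllevel^{-2s/d-1/2}$. For the spatial and sampling terms I would, for each summand, expand
\begin{align*}
a_{\kllevelrun}\,\chi_{\kllevelrun}\otimes\chi_{\kllevelrun}-b_{\kllevelrun}\,\eta_{\kllevelrun}\otimes\eta_{\kllevelrun}
=(a_{\kllevelrun}-b_{\kllevelrun})\,\chi_{\kllevelrun}\otimes\chi_{\kllevelrun}
+b_{\kllevelrun}\bigl(e_{\kllevelrun}\otimes\chi_{\kllevelrun}+\eta_{\kllevelrun}\otimes e_{\kllevelrun}\bigr),\qquad e_{\kllevelrun}:=\chi_{\kllevelrun}-\eta_{\kllevelrun},
\end{align*}
using $(a_{\kllevelrun},\chi_{\kllevelrun},b_{\kllevelrun},\eta_{\kllevelrun})=(\lambda_{\kllevelrun},\phi_{\kllevelrun},\lambda^{(h)}_{\kllevelrun},\phi^{(h)}_{\kllevelrun})$ for the spatial term and $(\lambda^{(h)}_{\kllevelrun},\phi^{(h)}_{\kllevelrun},\lambda^{(h;\nsamples)}_{\kllevelrun},\phi^{(h;\nsamples)}_{\kllevelrun})$ for the sampling term. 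In the eigenvalue part $\sum_{\kllevelrun}(a_{\kllevelrun}-b_{\kllevelrun})\chi_{\kllevelrun}\otimes\chi_{\kllevelrun}$ the diagonal tensors are orthonormal, so Pythagoras turns it into $(\sum_{\kllevelrun\le\kllevel}|a_{\kllevelrun}-b_{\kllevelrun}|^{2})^{1/2}\le\kllevel^{1/2}\max_{\kllevelrun\le\kllevel}|a_{\kllevelrun}-b_{\kllevelrun}|$; Proposition \ref{prop:babuska} then produces $\kllevel^{1/2}h^{s}$ for the spatial term and Weyl's bound \eqref{weyleigenvalue} produces $\kllevel^{1/2}\norm{\tildstiffnessmatrix-\tildestiffnessmatrixM}_{2\to2}$ for the sampling term.

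The crucial observation concerns the two rank-one correction blocks $\sum_{\kllevelrun}b_{\kllevelrun}\,e_{\kllevelrun}\otimes\chi_{\kllevelrun}$ and $\sum_{\kllevelrun}b_{\kllevelrun}\,\eta_{\kllevelrun}\otimes e_{\kllevelrun}$. Although the $e_{\kllevelrun}$ carry no orthogonality, in each block the \emph{other} slot is filled by the orthonormal family $\{\chi_{\kllevelrun}\}$ respectively $\{\eta_{\kllevelrun}\}$ (orthonormality of $\phi^{(h)}_{\kllevelrun}$ and $\phi^{(h;\nsamples)}_{\kllevelrun}$ in $L^{2}(\dom)$ being exactly \eqref{philhmortho}), so the cross terms vanish and Pythagoras gives
\begin{align*}
\norm{\textstyle\sum_{\kllevelrun\le\kllevel}b_{\kllevelrun}\,e_{\kllevelrun}\otimes\chi_{\kllevelrun}}_{L^{2}(\dom\times\dom)}^{2}
=\sum_{\kllevelrun\le\kllevel}b_{\kllevelrun}^{2}\normL{e_{\kllevelrun}}{\dom}^{2},
\end{align*}
and likewise for the second block. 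For the sampling term I would then use $b_{\kllevelrun}=\lambda^{(h;\nsamples)}_{\kllevelrun}\lesssim\lambda_{\kllevelrun}$ (which follows from \eqref{weyleigenvalue2}, since \eqref{condspectralgap1} forces $\norm{\tildstiffnessmatrix-\tildestiffnessmatrixM}_{2\to2}\le\delta_{\kllevelrun}/4\le\lambda_{\kllevelrun}/4$ and the standing assumption $C_2h^{s}\lambda_{\kllevel}^{-1}\le1$ gives $C_1\lambda_{\kllevelrun}^{-1}h^{2s}\lesssim h^{s}\lesssim\lambda_{\kllevelrun}$ uniformly for $\kllevelrun\le\kllevel$) together with the Davis-Kahan bound \eqref{phivecerror}, $\normL{e_{\kllevelrun}}{\dom}\lesssim\norm{\tildstiffnessmatrix-\tildestiffnessmatrixM}_{2\to2}/\delta_{\kllevelrun}$, to obtain $\sum_{\kllevelrun\le\kllevel}b_{\kllevelrun}^{2}\normL{e_{\kllevelrun}}{\dom}^{2}\lesssim\norm{\tildstiffnessmatrix-\tildestiffnessmatrixM}_{2\to2}^{2}\sum_{\kllevelrun\le\kllevel}(\lambda_{\kllevelrun}/\delta_{\kllevelrun})^{2}=G(\kllevel)^{2}\norm{\tildstiffnessmatrix-\tildestiffnessmatrixM}_{2\to2}^{2}$; for the spatial term, $\lambda^{(h)}_{\kllevelrun}\lesssim\lambda_{\kllevelrun}$ and $\normL{e_{\kllevelrun}}{\dom}\le C_2\lambda_{\kllevelrun}^{-1}h^{s}$ give $\kllevel^{1/2}h^{s}$. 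Summing the three contributions yields \eqref{eq:final-1}; finally $C_2h^{s}\lambda_{\kllevel}^{-1}\le1$ gives $h^{s}\lesssim\lambda_{\kllevel}\lesssim\kllevel^{-2s/d-1}$ by \eqref{thm:truncationErrorLambda}, so $\kllevel^{1/2}h^{s}\lesssim\kllevel^{-2s/d-1/2}$ is absorbed into the truncation term and \eqref{eq:final-2} follows.

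The main obstacle is precisely avoiding the naive termwise triangle inequality on the eigenfunction corrections: bounding $\sum_{\kllevelrun}b_{\kllevelrun}\normL{e_{\kllevelrun}}{\dom}$ summand by summand would only give the $\ell^{1}$-type quantity $\sum_{\kllevelrun\le\kllevel}\lambda_{\kllevelrun}/\delta_{\kllevelrun}\le\kllevel^{1/2}G(\kllevel)$, which is a factor $\kllevel^{1/2}$ too large. Exploiting the tensor-product orthogonality in the untouched slot is exactly what replaces this by the sharp $\ell^{2}$-quantity $G(\kllevel)$. A secondary technical point is to ensure the constants hidden in $b_{\kllevelrun}\lesssim\lambda_{\kllevelrun}$ are uniform over $1\le\kllevelrun\le\kllevel$, for which the spectral gap condition \eqref{condspectralgap1} and the assumption $C_2h^{s}\lambda_{\kllevel}^{-1}\le1$ are used.
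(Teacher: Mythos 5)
Your proposal is correct and follows essentially the same route as the paper's proof: the same three-way splitting into truncation, spatial and sampling errors, the same further decomposition of each difference of Mercer sums into an eigenvalue-difference part plus two eigenfunction-correction blocks, and the same exploitation of orthonormality in the untouched tensor slot, combined with Proposition \ref{prop:babuska}, Weyl's inequality \eqref{weyleigenvalue}, the Davis--Kahan bound \eqref{phivecerror} and the spectral gap condition \eqref{condspectralgap1} on the probability-$p_0$ event. The only (harmless) variation is that you absorb the gap condition into the direct bound $\lambda^{(h;\nsamples)}_{\kllevelrun}\lesssim\lambda_{\kllevelrun}$ before summing, whereas the paper keeps $\lambda^{(h;\nsamples)}_{\kllevelrun}\le\|\tildstiffnessmatrix-\tildestiffnessmatrixM\|_{2\to2}+\lambda_{\kllevelrun}$ inside the sum and splits it there, yielding the identical final bound.
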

\begin{proof}
Since \eqref{condspectralgapfinal1} holds, it is ensured that the condition \eqref{condspectralgap1} in our spectral gap assumption \ref{ass:spectralgap} is satisfied with  the  probability $p_0$ from \eqref{eq:ppp}.
Now recall the definition of $R$ in \eqref{carlemannkern}, of $R^{(\kllevel)}$ in \eqref{RL}, of $R^{(\kllevel;h)}$ in \eqref{mercerh} and of $R^{(\kllevel;h;\nsamples)}$ in \eqref{mercerh2}.
We will split the proof into three parts. Observe that we can decompose the approximation error as
\begin{align}\label{errordecomposition}
R(\vec{x},\vec{x}^{\prime})&-R^{(\kllevel;h;\nsamples)}(\vec{x},\vec{x}^{\prime}) = \\ &\underbrace{R(\vec{x},\vec{x}^{\prime})- R^{(\kllevel)}(\vec{x},\vec{x}^{\prime})}_{E_1}
+\underbrace{R^{(\kllevel)}(\vec{x},\vec{x}^{\prime})-R^{(\kllevel;h)}(\vec{x},\vec{x}^{\prime})}_{E_2}
+\underbrace{R^{(\kllevel;h)}(\vec{x},\vec{x}^{\prime})
-R^{(\kllevel;h;\nsamples)}(\vec{x},\vec{x}^{\prime})}_{E_3} \nonumber.
\end{align}
The first error term
\begin{align}\label{error1}
E_1(\vec{x},\vec{x}^{\prime}):=R(\vec{x},\vec{x}^{\prime})- R^{(\kllevel)}(\vec{x},\vec{x}^{\prime})=\sum_{\kllevelrun=\kllevel+1}^{\infty}\lambda_{\kllevelrun}\phi_{\kllevelrun}(\vec{x})\phi_{\kllevelrun}(\vec{x}^{\prime})
\end{align}
depends solely on the truncation parameter $\kllevel$.
The second error term
\begin{align}\label{error2}
E_2(\vec{x},\vec{x}^{\prime}):=R^{(\kllevel)}(\vec{x},\vec{x}^{\prime})-R^{(\kllevel;h)}(\vec{x},\vec{x}^{\prime})=\sum_{\kllevelrun=1}^{\kllevel} \lambda_{\kllevelrun} \phi_{\kllevelrun}(\vec{x})\phi_{\kllevelrun}(\vec{x}^{\prime})-\sum_{\kllevelrun=1}^{\kllevel} \lambda^{(h)}_{\kllevelrun} \phi^{(h)}_{\kllevelrun}(\vec{x})\phi^{(h)}_{\kllevelrun}(\vec{x}^{\prime})
\end{align}
depends on the spatial discretization $h$ and on the truncation parameter $L$.
The third error term
\begin{align}\label{error3}
E_3(\vec{x},\vec{x}^{\prime})&:=R^{(\kllevel;h)}(\vec{x},\vec{x}^{\prime})-R^{(\kllevel;h;\nsamples)}(\vec{x},\vec{x}^{\prime})\nonumber \\&=\sum_{\kllevelrun=1}^{\kllevel} \lambda^{(h)}_{\kllevelrun} \phi^{(h)}_{\kllevelrun}(\vec{x}) \phi^{(h)}_{\kllevelrun}(\vec{x}^{\prime})-\sum_{\kllevelrun=1}^{\kllevel} \lambda^{(h;\nsamples)}_{\kllevelrun} \phi^{(h;\nsamples)}_{\kllevelrun}(\vec{x})\phi^{(h;\nsamples)}_{\kllevelrun}(\vec{x}^{\prime})
\end{align}
depends on the sample size $\nsamples$, the spatial discretization $h$ and  on the truncation parameter $L$.

To derive a bound for \eqref{error1} we use the orthonormality of the eigenfunctions $\{\phi_{\ell}\}_{\ell=1}^{\infty}$ and obtain directly for the integral operator
\begin{align}\label{intoperror1}
\mathcal{E}_{1}:L^{2}(\dom) \to L^{2}(\dom), \quad v \mapsto \int_{\dom}E_{1}(\vec{x},\cdot) v(\vec{x}) \, \mathrm{d}\lebesgue(\vec{x}) 
\end{align}
the identity
\begin{align}\label{E1error}
\norm{\mathcal{E}_{1}}_{L^{2}(\dom)\to L^{2}(\dom) } = \norm{E_1}_{L^{2}(\dom \times \dom)}  = \left(\sum_{\kllevelrun=\kllevel+1}^{\infty}  \lambda_{\kllevelrun}^2 \right)^{\frac{1}{2}}.
\end{align}
Now, we can use inequality \eqref{thm:truncationErrorLambda} of Theorem \ref{thm:truncationError} and derive the bound
\begin{align*}
\left(\sum_{\kllevelrun=\kllevel+1}^{\infty}  \lambda_{\kllevelrun}^2 \right)^{\frac{1}{2}} \le C_{\ref{thm:truncationError}} 
\left(\sum_{\kllevelrun=\kllevel+1}^{\infty}  \kllevelrun^{-\frac{4s}{d}-2} \right)^{\frac{1}{2}} 
\le C_{\ref{thm:truncationError}} \left(\int_{\kllevelrun=\kllevel}^{\infty}  x^{-\frac{4s}{d}-2} \, \mathrm{d}x \right)^{\frac{1}{2}} \le   C_{\ref{thm:truncationError}} \left(\frac{4s}{d}+1 \right)^{-\frac{1}{2}} \kllevel^{-\frac{2s}{d}-\frac{1}{2}} .
\end{align*}

For the second term \eqref{error2}, we have
\begin{align*}
E_2(\vec{x},\vec{x}^{\prime})&:=\sum_{\kllevelrun=1}^{\kllevel} \lambda_{\kllevelrun} \phi_{\kllevelrun}(\vec{x})\phi_{\kllevelrun}(\vec{x}^{\prime})-
\sum_{\kllevelrun=1}^{\kllevel} \lambda^{(h)}_{\kllevelrun} \phi^{(h)}_{\kllevelrun}(\vec{x})\phi^{(h)}_{\kllevelrun}(\vec{x}^{\prime})\\
&= \sum_{\kllevelrun=1}^{\kllevel} \left( \lambda_{\kllevelrun} -\lambda^{(h)}_{\kllevelrun} \right)\phi_{\kllevelrun}(\vec{x})\phi_{\kllevelrun}(\vec{x}^{\prime})
+\sum_{\kllevelrun=1}^{\kllevel} \lambda^{(h)}_{\kllevelrun} \left(\phi_{\kllevelrun}(\vec{x})-\phi^{(h)}_{\kllevelrun}(\vec{x})\right)
\phi_{\kllevelrun}(\vec{x}^{\prime})\\
&+\sum_{\kllevelrun=1}^{\kllevel} \lambda^{(h)}_{\kllevelrun} \phi^{(h)}_{\kllevelrun}(\vec{x}) \left(\phi_{\kllevelrun}(\vec{x}^{\prime})-\phi^{(h)}_{\kllevelrun}(\vec{x}^{\prime})\right)
\\
&=:E_{2;1}(\vec{x},\vec{x}^{\prime})+E_{2;2}(\vec{x},\vec{x}^{\prime})+E_{2;3}(\vec{x},\vec{x}^{\prime}).
\end{align*}
The orthonormality of $\{\phi_{\ell}\}_{\ell=1}^{\infty}$ and an application of Proposition \ref{prop:babuska} leads to  
\begin{align}\label{eq:yyy1}
\normL{{E}_{2;1}}{\dom\times\dom}&\le \left(\sum_{\kllevelrun=1}^{\kllevel} |\lambda_{\kllevelrun} -\lambda^{(h)}_{\kllevelrun}|^2\right)^{1/2} \le   \left(\sum_{\kllevelrun=1}^{\kllevel} C^2_1 h^{4s}\lambda_{\kllevelrun}^{-2}\right)^{1/2}\nonumber 
\\&\le C_1 h^{s}  \left(\sum_{\kllevelrun=1}^{\kllevel} h^{2s}\lambda_{\kllevelrun}^{-2}\right)^{1/2} \le \frac{C_1}{C_2} \kllevel^{\frac{1}{2}}h^s,
\end{align}
where the last inequality holds since $C_2 h^{s}  \lambda_{\kllevel}^{-1} \le 1$. 
This bound implies 
\begin{align}\label{condhL}
C_2 h^{s} \le  \lambda_{\kllevel} \le \kllevel^{-\frac{2s}{d}-1}.
\end{align}
Now we turn to $E_{2;2}$. The orthonormality of $\{\phi_{\ell}\}_{\ell=1}^{\infty}$ and the bound $\lambda^{(h)}_{\kllevelrun} \le \lambda_{\kllevelrun}$ for $1\le \kllevelrun \le \kllevel$ due to the Courant min-max principle for decreasing eigenvalues implies
\begin{align*}
\normL{{E}_{2;2}}{\dom\times\dom}^2
\le\sum_{\kllevelrun=1}^{\kllevel}\left(\lambda^{(h)}_{\kllevelrun}\right)^{2}
\normL{\phi_{\kllevelrun}-\phi_{\kllevelrun}^{(h)}}{\dom}^2 \le \sum_{\kllevelrun=1}^{\kllevel}\lambda^{2}_{\kllevelrun}
\normL{\phi_{\kllevelrun}-\phi_{\kllevelrun}^{(h)}}{\dom}^2 \le C_2^{2} h^{2s} \kllevel,
\end{align*}
where we used  $\normL{\phi_{\kllevelrun}^{(h)}-\phi_{\kllevelrun}}{\dom}\leq C_{2}\lambda_{\kllevelrun}^{-1} h^{s}$  in the last step, see Proposition \ref{prop:babuska}.
Thus, we obtain
\begin{align}\label{eq:yyy2}
\normL{{E}_{2;2}}{\dom\times\dom} &\le C_{2} h^{s} \kllevel^{\frac{1}{2}}.
\end{align}

\noindent
For the term $E_{2,3}$ the orthonormality of the series $\{\phi_{\kllevelrun}^{(h)}\}_{\kllevelrun=1}^{\dimVh}$, which stems from the eigenvalue problem \eqref{genalizedeigenvalue}, leads to
\begin{align*}
\normL{{E}_{2;3}}{\dom\times\dom}^2
&=\sum_{\kllevelrun=1}^{\kllevel}\left(\lambda^{(h)}_{\kllevelrun}\right)^2 \normL{\phi_{\kllevelrun}-\phi_{\kllevelrun}^{(h)}}{\dom}^2.
\end{align*}
Thus, we obtain with verbatim the same computations as for \eqref{eq:yyy2}, the bound
\begin{align}\label{eq3}
\normL{{E}_{2;3}}{\dom\times\dom} &\le C_{2} h^{s} \kllevel^{\frac{1}{2}}.
\end{align}
Altogether with the estimates \eqref{eq:yyy1}, \eqref{eq:yyy2} and \eqref{eq3}, we derive
\begin{align}\label{E2error}
\normL{{E}_{2}}{\dom\times\dom}&=\normL{{E}_{2;1}+{E}_{2;2}+{E}_{2;3}}{\dom\times\dom}\le \normL{{E}_{2;1}}{\dom\times\dom}+ \normL{{E}_{2;2}}{\dom\times\dom}+\normL{{E}_{2;3}}{\dom\times\dom} \nonumber \\&\le \left( \frac{C_1}{C_2} + 2 C_2 \right) h^{s} \kllevel^{\frac{1}{2}} \le \left( \frac{C_1}{C_2} + 2 C_2 \right) C^{-1}_2\kllevel^{-\frac{2s}{d}-1} \kllevel^{\frac{1}{2}}=  \left( \frac{C_1}{C_2} + 2 C_2 \right) C^{-1}_2\kllevel^{-\frac{2s}{d}-\frac{1}{2}}.
\end{align}
For the third term $E_3$ in \eqref{error3}, we proceed in an analogous fashion as in the splitting of \eqref{error2}. We then have the decomposition
\begin{align*}
&E_3(\vec{x},\vec{x}^{\prime})=\sum_{\kllevelrun=1}^{\kllevel} \lambda^{(h)}_{\kllevelrun} \phi_{\kllevelrun}^{(h)}(\vec{x})\phi_{\kllevelrun}^{(h)}(\vec{x}^{\prime})
-\sum_{\kllevelrun=1}^{\kllevel} \lambda^{(h;\nsamples)}_{\kllevelrun} \phi^{(h;\nsamples)}_{\kllevelrun}(\vec{x})\phi^{(h;\nsamples)}_{\kllevelrun}(\vec{x}^{\prime})
\\
&= \sum_{\kllevelrun=1}^{\kllevel} \left( \lambda^{(h)}_{\kllevelrun} -\lambda^{(h;\nsamples)}_{\kllevelrun} \right)
\phi_{\kllevelrun}^{(h)}(\vec{x})\phi_{\kllevelrun}^{(h)}(\vec{x}^{\prime})
+\sum_{\kllevelrun=1}^{\kllevel} \lambda^{(h;\nsamples)}_{\kllevelrun} \left(\phi_{\kllevelrun}^{(h)}(\vec{x})-\phi^{(h;\nsamples)}_{\kllevelrun}(\vec{x})\right)\phi^{(h)}_{\kllevelrun}(\vec{x}^{\prime}) \\&+\sum_{\kllevelrun=1}^{\kllevel} \lambda^{(h;\nsamples)}_{\kllevelrun} \left(\phi_{\kllevelrun}^{(h)}(\vec{x}^{\prime}) -\phi_{\kllevelrun}^{(h;\nsamples)}(\vec{x}^{\prime})\right)
\phi^{(h;\nsamples)}_{\kllevelrun}(\vec{x}) \\
&=:E_{3;1}+E_{3;2}+E_{3;3}.
\end{align*}
Now, for the term $E_{3,1}$, the orthonormality of the basis $\{\phi_{\kllevelrun}^{(h)}\}_{\kllevelrun=1}^{\dimVh}$ and an application of \eqref{weyleigenvalue} implies
\begin{align}\label{eq:zzz1a}
\normL{{E}_{3;1}}{\dom}
&=\left( \sum_{\kllevelrun=1}^{\kllevel} \left( \lambda^{(h)}_{\kllevelrun} -\lambda^{(h;\nsamples)}_{\kllevelrun} \right)^2\right)^{\frac{1}{2}} \le \kllevel^{1/2} \left\|\tildstiffnessmatrix-\tildestiffnessmatrixM \right\|_{2 \to 2}.
\end{align}
\noindent
To derive an upper estimate for the term $E_{3,2}$, we will use our bound from Theorem \ref{theoremeigenfucntions} for the eigenfunction approximation in \eqref{phivecerror}. Thus we have with probability $p_0$ that
\begin{align}\label{phierror}
\normL{\phi^{(h)}_{\kllevelrun}-\phi^{(h;\nsamples)}_{\kllevelrun}}{\dom}^2\le 4C^2 \frac{\left\|\tildstiffnessmatrix-\tildestiffnessmatrixM \right\|^2_{2\to 2}}{\delta_{\kllevelrun}^2},
\end{align}
where we note at this point that this bound makes use of  \eqref{condspectralgap1} from the spectral gap assumption \ref{ass:spectralgap}.
 Due to the orthonormality of the basis $\{ \phi^{(h)}_{\kllevelrun}\}_{\kllevelrun=1}^{\dimVh}$, this yields
\begin{align*}
\normL{{E}_{3;2}}{\dom\times\dom}^2
&=\sum_{\kllevelrun=1}^{\kllevel} (\lambda^{(h;\nsamples)}_{\kllevelrun})^2 \normL{\phi^{(h)}_{\kllevelrun}-\phi^{(h;\nsamples)}_{\kllevelrun}}{\dom}^2 \le 4C^2  \sum_{\kllevelrun=1}^{\kllevel}\left(\frac{\left\|\tildstiffnessmatrix-\tildestiffnessmatrixM \right\|_{2\to 2} }{\delta_{\kllevelrun}} \right)^{2}\left(\lambda^{(h;\nsamples)}_{\kllevelrun}\right)^{2}.
\end{align*}
Hence, we obtain
\begin{align*}
\normL{{E}_{3;2}}{\dom\times\dom}
\le 2C\left\|\tildstiffnessmatrix-\tildestiffnessmatrixM \right\|_{2\to 2} \left(\sum_{\kllevelrun=1}^{\kllevel}\left( \frac{\lambda^{(h;\nsamples)}_{\kllevelrun}
}{\delta_{\kllevelrun}} \right)^{2}\right)^{\frac{1}{2}}.
\end{align*}
Moreover, we can use \eqref{weyleigenvalue}, i.e., 
\begin{align*}
\left| \lambda^{(h)}_{\kllevelrun} -\lambda^{(h;\nsamples)}_{\kllevelrun} \right| \le \left\|\tildstiffnessmatrix -\tildestiffnessmatrixM \right\|_{2 \to 2}, \quad 1\le  \kllevelrun \le \min\{\dimVh,\kllevel\}.
\end{align*}
to get
\begin{align}\label{lambdatriag2}
\left|\lambda^{(h;\nsamples)}_{\kllevelrun} \right|\le \left| \lambda^{(h)}_{\kllevelrun} -\lambda^{(h;\nsamples)}_{\kllevelrun} \right| +\left| \lambda^{(h)}_{\kllevelrun} \right| \le  \left\|\tildstiffnessmatrix -\tildestiffnessmatrixM \right\|_{2 \to 2} +\left| \lambda_{\kllevelrun} \right|,
\end{align}
where we used again $\lambda^{(h)}_{\kllevelrun} \le \lambda_{\kllevelrun}$ for $1\le \kllevelrun \le \kllevel$ in the last step.
This yields with probability $p_0$
\begin{align}\label{eq:zzz2}
\normL{{E}_{3;2}}{\dom\times\dom}
\le 2C  \left\|\tildstiffnessmatrix-\tildestiffnessmatrixM \right\|_{2\to 2} \left(\sum_{\kllevelrun=1}^{\kllevel}\left( \frac{ \left\|\tildstiffnessmatrix -\tildestiffnessmatrixM \right\|_{2 \to 2}  +  \left| \lambda_{\kllevelrun} \right|}{\delta_{\kllevelrun}} \right)^{2}\right)^{\frac{1}{2}}.
\end{align}
For the term $E_{3,3}$, we use orthonormality of the $\{\phi_{\kllevelrun}^{(h;\nsamples)}\}$ to get
\begin{align}\label{e32e33}
\normL{E_{3;3}}{\dom\times \dom}^2&=\sum_{\kllevelrun=1}^{\kllevel} (\lambda^{(h;\nsamples)}_{\kllevelrun})^2 \normL{\phi^{(h)}_{\kllevelrun}-\phi^{(h;\nsamples)}_{\kllevelrun}}{\dom}^2 =\normL{E_{3;2}}{\dom\times \dom}^2.
\end{align}
Thus, we again obtain  probability $p_0$
\begin{align}\label{E33error}
\normL{{E}_{3;3}}{\dom\times\dom}
\le 2C  \left\|\tildstiffnessmatrix-\tildestiffnessmatrixM \right\|_{2\to 2} \left(\sum_{\kllevelrun=1}^{\kllevel}\left( \frac{ \left\|\tildstiffnessmatrix -\tildestiffnessmatrixM \right\|_{2 \to 2}  +  \left| \lambda_{\kllevelrun} \right|}{\delta_{\kllevelrun}} \right)^{2}\right)^{\frac{1}{2}}.
\end{align}

Altogether, combining the inequalities \eqref{eq:zzz1a}, \eqref{eq:zzz2} and \eqref{E33error} we obtain probability $p_0$
\begin{align}\label{E3error1}
&\normL{{E}_{3}}{\dom\times\dom}=\normL{{E}_{3;1}+{E}_{3;2}+{E}_{3;3}}{\dom\times\dom} \le \normL{{E}_{3;1}}{\dom\times\dom}+\normL{{E}_{3;2}}{\dom\times\dom}+\normL{{E}_{3;3}}{\dom\times\dom} \nonumber \\&\le
 \left\|\tildstiffnessmatrix-\tildestiffnessmatrixM \right\|_{2\to 2} \left( \kllevel^{\frac{1}{2}}+4C \left(\sum_{\kllevelrun=1}^{\kllevel}\left( \frac{ \left\|\tildstiffnessmatrix -\tildestiffnessmatrixM \right\|_{2 \to 2}  +  \left| \lambda_{\kllevelrun} \right|}{\delta_{\kllevelrun}} \right)^{2}\right)^{\frac{1}{2}} \right).
\end{align}
Thus, we get with  \eqref{condspectralgap1} from the spectral gap assumption \ref{ass:spectralgap}\begin{align}\label{goodset}
\delta_{\kllevelrun}\ge 4C_1h^{2s} \lambda_{\kllevelrun+1}^{-1} +4\left\|\tildstiffnessmatrix-\tildestiffnessmatrixM \right\|_{2 \to 2} \ge 4\left\|\tildstiffnessmatrix-\tildestiffnessmatrixM \right\|_{2 \to 2},
\end{align}
which is satisfied  with probability $p_0$, that 
\begin{align}\label{goodsetestimate}
&\sum_{\kllevelrun=1}^{\kllevel}\left( \frac{ \left\|\tildstiffnessmatrix -\tildestiffnessmatrixM \right\|_{2 \to 2}  +  \left| \lambda_{\kllevelrun} \right|}{\delta_{\kllevelrun}} \right)^{2} \le 
2 \sum_{\kllevelrun=1}^{\kllevel}\left( \frac{ \left\|\tildstiffnessmatrix -\tildestiffnessmatrixM \right\|_{2 \to 2}} {\delta_{\kllevelrun}} \right)^{2} +2 \sum_{\kllevelrun=1}^{\kllevel} \left(\frac{ \left| \lambda_{\kllevelrun} \right|}{\delta_{\kllevelrun}} \right)^{2}
\nonumber \\&\le\frac{1}{8}  \kllevel +  2 \sum_{\kllevelrun=1}^{\kllevel} \left(\frac{ \lambda_{\kllevelrun} }{\delta_{\kllevelrun}} \right)^{2}
= \frac{1}{8}  \kllevel + 2 G^2(\kllevel).
\end{align}
Thus, we obtain, using $\sqrt{a+b} \le \sqrt{a}+\sqrt{b}$ for $a,b\ge 0$, that the bound \eqref{E3error1} reduces to
\begin{align}\label{E3error}
\normL{{E}_{3}}{\dom\times\dom} \le
 \left\|\tildstiffnessmatrix-\tildestiffnessmatrixM \right\|_{2\to 2} \left( \kllevel^{\frac{1}{2}}+\sqrt{2}C \kllevel^{\frac{1}{2}} + 4\sqrt{2} C G(\kllevel)\right),
\end{align}
which holds with  probability $p_0$.
Finally, combining the inequalities \eqref{E1error}, \eqref{E2error} and \eqref{E3error} we obtain 
\begin{align*}
\norm{R-R^{(\kllevel;h;\nsamples)}}_{\dom \times \dom} \le \norm{E_1 + E_2 + E_3}_{\dom \times \dom} \le \norm{E_1 }_{\dom \times \dom} +\norm{E_2 }_{\dom \times \dom} +\norm{E_3}_{\dom \times \dom}, 
\end{align*}
which shows the assertion.
\end{proof}
The bound in Theorem \ref{finalthm} still depends on $\left\|\tildstiffnessmatrix-\tildestiffnessmatrixM \right\|_{2 \to 2} $. This term however  involves the $\nsamples$-dependent sampling approximation \eqref{tildestiffnessample} and is thus a random variable. In the following, we therefore are interested in the expected value of the error. 
To this end, note that we can not use Corollary \ref{corboundtildeS} directly as we need the spectral gap assumption \ref{ass:spectralgap} to be valid. But this is only 
the case with probability $p_0$, which needs to be properly be taken care of.
Moreover, we need to introduce the quantity
\begin{align}\label{HL}
H(\kllevel):= {\left(\frac{1}{48}\min_{1\le \kllevelrun \le \kllevel }\delta_{\kllevelrun}\right)^{2}},
\end{align}
which is a function of the truncation parameter $\kllevel$ that depends on spectral properties as given in \eqref{defn:gap} of the true kernel $R$ . For general operators this quantity is not easy to obtain.


However, in the simple case of $d$-dimensional Brownian motion we can obtain a precise value as
\begin{align}\label{eq:Hlasymptotic}
H_{R^{(d)}_{B}}(\kllevel)&:=\frac{1}{{48^{2}}}\min_{\kllevelrun=1}^{\kllevel} \delta^{2}_{R^{(d)}_{B}}(\kllevelrun)=\frac{1}{{2304}} \min_{\kllevelrun=1}^{\kllevel}\left(\lambda_{R^{(1)}_{B}}(1) \right)^{2(d-1)}\left(\delta_{R^{(1)}_{B}}(\kllevelrun)\right)^{2}\nonumber \\
&=\frac{1}{{2304}} \left(\lambda_{R^{(1)}_{B}}(1) \right)^{2(d-1)} \frac{1}{4}\left(\frac{2}{\pi}\right)^4\min_{\kllevelrun=1}^{\kllevel}\left(\frac{\kllevelrun}{\left(\kllevelrun^2 -\frac{1}{4}\right)^{2}}\right)^{2} 
=\frac{1}{{9216}}\left(\frac{2}{\pi}\right)^{4d} \min_{\kllevelrun=1}^{\kllevel}\left(\frac{\kllevelrun}{\left(\kllevelrun^2 -\frac{1}{4}\right)^{2}}\right)^{2}
\nonumber  \\ &\approx \kllevel^{-6} \quad \text{for } \kllevel \text{ large.}
\end{align}
Note here that, in contrast to the value $G_{R^{(d)}_{B}}(\kllevel)$ in \eqref{eq:gl:asymptotic}, the dimension enters the overall value of $H_{R^{(d)}_{B}}(\kllevel)$ in \eqref{eq:Hlasymptotic}  now  exponentially.

Altogether, in the general situation, we have the following result.
\begin{theorem}\label{thm:final2}
Under the conditions of Proposition \ref{prop:spectralgap}, i.e. \eqref{condspectralgapfinal1}, with condition \eqref{condhL}  and with definition \eqref{tilderhoM},  there holds 
\begin{align}\label{eq:final-3}
\expect{\norm{{R}-R^{(\kllevel;h;\nsamples)}}_{L^{2}(\dom\times\dom)}} &\lesssim  \kllevel^{-\frac{2s}{d}-\frac{1}{2}} + \left( \kllevel^{\frac{1}{2}}+ G(\kllevel) \right) \tilde{\rho}^{\frac{1}{2}}_{h}(\nsamples) \lambda_{\max}\left(\massmatrix\right) \nonumber\\
	&+  L^{\frac{1}{2}} h^{-d} \exp\left(- \nsamples  \rho_1 H(\kllevel)\lambda_{\max}^{-2} \left(\massmatrix\right)  \right),
\end{align}
where $H(\kllevel)$ is defined in \eqref{HL} and $\rho_1$ denotes a constant given in \eqref{rho1def}.
\end{theorem}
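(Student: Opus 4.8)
The plan is to deduce the estimate in expectation from the high-probability bound of Theorem \ref{finalthm} by a conditioning argument on the event where the spectral gap assumption holds. Let $A$ denote the event on which condition \eqref{condspectralgap1} of Assumption \ref{ass:spectralgap} is satisfied. By Proposition \ref{prop:spectralgap}, the definition of $p_0$ in \eqref{eq:ppp}, the relation $\dimVh \sim h^{-d}$, and the identity $H(\kllevel)=\left(\tfrac{1}{48}\min_{1\le\kllevelrun\le\kllevel}\delta_{\kllevelrun}\right)^2$ from \eqref{HL}, the failure probability is controlled by
\begin{align*}
\prob(A^c)\le 1-p_0 = 2\dimVh\, 5^{\taperingint}\exp\left(-\nsamples\rho_1 H(\kllevel)\lambda_{\max}^{-2}\left(\massmatrix\right)\right),\qquad \taperingint=\nsamples^{\frac{1}{2\alpha+1}}.
\end{align*}
I would then split the expectation as
\begin{align*}
\expect{\normL{R-R^{(\kllevel;h;\nsamples)}}{\dom\times\dom}} = \expect{\mathbf{1}_{A}\normL{R-R^{(\kllevel;h;\nsamples)}}{\dom\times\dom}} + \expect{\mathbf{1}_{A^c}\normL{R-R^{(\kllevel;h;\nsamples)}}{\dom\times\dom}}
\end{align*}
and treat the two contributions separately.

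On the good event $A$ the pointwise bound \eqref{eq:final-2} of Theorem \ref{finalthm} applies. Bounding $\mathbf{1}_A\le 1$ and taking expectations, the deterministic truncation term $\kllevel^{-2s/d-1/2}$ is unaffected, while the stochastic contribution obeys
\begin{align*}
\expect{\mathbf{1}_A\left(\kllevel^{\frac12}+G(\kllevel)\right)\norm{\tildstiffnessmatrix-\tildestiffnessmatrixM}_{2\to2}} \le \left(\kllevel^{\frac12}+G(\kllevel)\right)\expect{\norm{\tildstiffnessmatrix-\tildestiffnessmatrixM}_{2\to2}}.
\end{align*}
Here I invoke Corollary \ref{corboundtildeS}, i.e.\ \eqref{errorexpec}, together with the comparison $\rho_{h}(\nsamples)\lesssim\tilde{\rho}_{h}(\nsamples)$ from \eqref{tilderhoM}, to get $\expect{\norm{\tildstiffnessmatrix-\tildestiffnessmatrixM}_{2\to2}}\lesssim \tilde{\rho}_{h}^{1/2}(\nsamples)\lambda_{\max}\left(\massmatrix\right)$. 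This reproduces exactly the first two terms of \eqref{eq:final-3}.

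On the bad event $A^c$ the spectral-gap--based eigenfunction estimate of Theorem \ref{theoremeigenfucntions} is unavailable, so I fall back on a crude, probability-weighted bound. Using the triangle inequality and the $L^2(\dom\times\dom)$-orthonormality of the computed Mercer modes recorded in \eqref{philhmortho}, I estimate
\begin{align*}
\normL{R-R^{(\kllevel;h;\nsamples)}}{\dom\times\dom} \le \normL{R}{\dom\times\dom} + \left(\sum_{\kllevelrun=1}^{\kllevel}\left(\lambda^{(h;\nsamples)}_{\kllevelrun}\right)^2\right)^{\frac12} \le \normL{R}{\dom\times\dom} + \kllevel^{\frac12}\norm{\tildestiffnessmatrixM}_{2\to2},
\end{align*}
and then split $\norm{\tildestiffnessmatrixM}_{2\to2}\le \norm{\tildstiffnessmatrix}_{2\to2}+\norm{\tildstiffnessmatrix-\tildestiffnessmatrixM}_{2\to2}$, where $\norm{\tildstiffnessmatrix}_{2\to2}=\lambda^{(h)}_1\le\lambda_1$ is bounded by the Courant min-max principle and $\normL{R}{\dom\times\dom}$ is a fixed constant. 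Taking expectations, the deterministic part contributes $\lesssim \kllevel^{1/2}\prob(A^c)\lesssim \kllevel^{1/2}h^{-d}\exp\left(-\nsamples\rho_1 H(\kllevel)\lambda_{\max}^{-2}\left(\massmatrix\right)\right)$, the third term of \eqref{eq:final-3}, while the leftover piece $\kllevel^{1/2}\expect{\mathbf{1}_{A^c}\norm{\tildstiffnessmatrix-\tildestiffnessmatrixM}_{2\to2}}\le \kllevel^{1/2}\expect{\norm{\tildstiffnessmatrix-\tildestiffnessmatrixM}_{2\to2}}$ is again controlled by Corollary \ref{corboundtildeS} and absorbed into the second term. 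I expect the main obstacle to be precisely this bad-event estimate: one must bound the genuinely random reconstruction $R^{(\kllevel;h;\nsamples)}$ without the spectral gap, identify the exponential rate with $H(\kllevel)$ and the prefactor $h^{-d}\sim\dimVh$ coming from $1-p_0$, and argue that the subexponential factor $5^{\taperingint}$ with $\taperingint=\nsamples^{1/(2\alpha+1)}$ is dominated by the genuine decay $\exp(-\nsamples\rho_1 H(\kllevel)\lambda_{\max}^{-2}(\massmatrix))$ for $\nsamples$ large and hence absorbed into the implied constant.
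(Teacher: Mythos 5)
Your proposal is correct and follows essentially the same route as the paper's proof: a good/bad-event split on the spectral gap condition of Assumption \ref{ass:spectralgap}, using the high-probability bound of Theorem \ref{finalthm} together with Corollary \ref{corboundtildeS} on the good event, and a crude orthonormality-based bound weighted by the failure probability $1-p_0$ from Proposition \ref{prop:spectralgap} on the bad event. The only cosmetic differences are that the paper splits just the sampling term $E_3$ (rather than the whole error) over the two events, and that your explicit remark about absorbing the factor $5^{\taperingint}$ into the exponential decay addresses a step the paper performs silently.
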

\begin{proof}
From Theorem \ref{finalthm}, we have  with condition \eqref{condhL} that
\begin{align*}
&\expect{\norm{{R}-R^{(\kllevel;h;\nsamples)}}_{L^{2}(\dom\times\dom)}} \lesssim  \kllevel^{-\frac{2s}{d}-\frac{1}{2}}   + h^{s} \kllevel^{\frac{1}{2}}+  \expect{ \normL{E_{3}}{\dom\times \dom}} \\
&\lesssim  \kllevel^{-\frac{2s}{d}-\frac{1}{2}}   +  \expect{ \normL{E_{3}}{\dom\times \dom}} \\
&\lesssim  \kllevel^{-\frac{2s}{d}-\frac{1}{2}}  +  \expect{ \normL{E_{3;1}}{\dom\times \dom}+ \normL{E_{3;2}}{\dom\times \dom}+ \normL{E_{3;3}}{\dom\times \dom}}. 
\end{align*}
The term $\normL{{E}_{3;1}}{\dom\times \dom}$ from  \eqref{eq:zzz1a} can be handled directly and we obtain with Corollary  \ref{corboundtildeS} the bound
\begin{align}\label{expectE31}
\expect{\normL{{E}_{3;1}}{\dom\times \dom}}&\le  \kllevel^{1/2} \expect{\left\|\tildstiffnessmatrix-\tildestiffnessmatrixM \right\|_{2 \to 2}} \lesssim  \kllevel^{1/2}  \rho^{\frac{1}{2}}_{h}(\nsamples)
\end{align}
Next, we define the set, where the assumption \ref{ass:spectralgap} is violated, as bad and the good set simply as the complement of the bad set, i.e.,
\begin{align*}
M_{\text{bad}}&:=\left\{\vec{\om} \in \Omega :  \frac{\min_{1\le \kllevelrun \le \kllevel }\delta_{\kllevelrun}}{4}< \left\|\tildstiffnessmatrix-\tildestiffnessmatrixM \right\|_{2 \to 2}\right\},\\
M_{\text{good}}&:=\left\{\vec{\om} \in \Omega :  \frac{\min_{1\le \kllevelrun \le \kllevel }\delta_{\kllevelrun}}{4}\ge \left\|\tildstiffnessmatrix-\tildestiffnessmatrixM \right\|_{2 \to 2}\right\}.
\end{align*}
We note that Proposition \ref{prop:spectralgap} yields 
\begin{align*}
	\left| M_{\text{bad}}\right| \le 2 \dimVh 5^{\taperingint}\exp\left(- \nsamples  \rho_1   H(\kllevel) \lambda_{\max}^{-2} \left(\massmatrix\right) \right)\quad \text{where} \quad \tau =\nsamples^{\frac{1}{2\alpha+1}}.
\end{align*}
Now, we split the term $\expect{\normL{{E}_{3;j}}{\dom\times \dom}}$ as follows
\begin{align*}
\expect{\normL{{E}_{3;j}}{\dom\times \dom}}=\int_{\vec{\om} \in M_{\text{bad}} } \normL{{E}_{3;j}}{\dom\times \dom} \, \mathrm{d}\prob(\vec{\om})+\int_{\vec{\om} \in M_{\text{good}} } \normL{{E}_{3;j}}{\dom\times \dom} \, \mathrm{d}\prob(\vec{\om}),
\end{align*}
and we also recall that $\normL{{E}_{3;2}}{\dom\times \dom}=\normL{{E}_{3;3}}{\dom\times \dom}$ as we have seen in \eqref{e32e33}.
In order to treat the first integral relating to the bad set, we will show that the integrand is point-wise bounded by a deterministic quantity on $ M_{\text{bad}}$, i.e., that $\normL{{E}_{3;j}}{\dom\times \dom} \le C$ holds.
First, we use \eqref{philhmortho} to get at least
\begin{align*}
\normL{\phi^{(h)}_{\kllevelrun}-\phi^{(h;\nsamples)}_{\kllevelrun}}{\dom}^2&\le 2\normL{\phi^{(h)}_{\kllevelrun}}{\dom}^2+2\normL{\phi^{(h;\nsamples)}_{\kllevelrun}}{\dom}^2\le 4.
\end{align*}
Moreover, we use again \eqref{lambdatriag2}, i.e.,
\begin{align*}
\left|\lambda^{(h;\nsamples)}_{\kllevelrun} \right|\le \left| \lambda^{(h)}_{\kllevelrun} -\lambda^{(h;\nsamples)}_{\kllevelrun} \right| +\left| \lambda^{(h)}_{\kllevelrun} \right| \le  \left\|\tildstiffnessmatrix -\tildestiffnessmatrixM \right\|_{2 \to 2} +\left| \lambda_{\kllevelrun} \right|.
\end{align*}
Thus, we obtain for $j=2,3$ the bound
\begin{align*}
\normL{{E}_{3;j}}{\dom\times\dom}^2
&=\sum_{\kllevelrun=1}^{\kllevel} (\lambda^{(h;\nsamples)}_{\kllevelrun})^2 \normL{\phi^{(h)}_{\kllevelrun}-\phi^{(h;\nsamples)}_{\kllevelrun}}{\dom}^2 \le 4 \sum_{\kllevelrun=1}^{\kllevel} (\lambda^{(h;\nsamples)}_{\kllevelrun})^2 
\\&\le 4 \sum_{\kllevelrun=1}^{\kllevel} \left(  \left\|\tildstiffnessmatrix -\tildestiffnessmatrixM \right\|_{2 \to 2} +\left| \lambda_{\kllevelrun} \right|\right)^2  \le 8  \sum_{\kllevelrun=1}^{\kllevel}  \left\|\tildstiffnessmatrix -\tildestiffnessmatrixM \right\|^2_{2 \to 2}  + 8 \sum_{\kllevelrun=1}^{\kllevel}\left| \lambda_{\kllevelrun} \right|^2
\\&\le 8  \kllevel  \left\|\tildstiffnessmatrix -\tildestiffnessmatrixM \right\|^2_{2 \to 2}  + 8\kllevel \left| \lambda_{1} \right|^2
\end{align*}
and get
\begin{align*}
\normL{{E}_{3;j}}{\dom\times\dom}\le 2\sqrt{2} \ \kllevel^{\frac{1}{2}}\left(  \left\|\tildstiffnessmatrix -\tildestiffnessmatrixM \right\|_{2 \to 2} +\lambda_{1}  \right)
\end{align*}
using $\sqrt{a+b} \le \sqrt{a}+\sqrt{b}$ for $a,b\ge 0$.
Hence, we obtain on the bad set
\begin{align*}
\int_{\vec{\om} \in M_{\text{bad}} } \normL{{E}_{3;j}}{\dom\times \dom} \, \mathrm{d}\prob(\vec{\om})\le  2\sqrt{2}  \ \kllevel^{\frac{1}{2}} \int_{\vec{\om} \in M_{\text{bad}} }   \left\|\tildstiffnessmatrix -\tildestiffnessmatrixM \right\|_{2 \to 2} \, \mathrm{d}\prob(\vec{\om}) +  2\sqrt{2}  \ \kllevel^{\frac{1}{2}} \left|  M_{\text{bad}} \right|  \lambda_{1}.
\end{align*}
For the first term, we use \eqref{errorexpec} to get
\begin{align*}
\int_{\vec{\om} \in M_{\text{bad}} }   \left\|\tildstiffnessmatrix -\tildestiffnessmatrixM \right\|_{2 \to 2} \, \mathrm{d}\prob(\vec{\om})& \le \int_{\vec{\om} \in \Omega }    \left\|\tildstiffnessmatrix -\tildestiffnessmatrixM \right\|_{2 \to 2} \, \mathrm{d}\prob(\vec{\om})
\lesssim  \rho^{\frac{1}{2}}_{h}(\nsamples)\lambda_{\max}\left(\massmatrix\right) 
\end{align*}
and thus obtain 
\begin{align*}
\int_{\vec{\om} \in M_{\text{bad}} } \normL{{E}_{3;j}}{\dom\times \dom} \, \mathrm{d}\prob(\vec{\om})\lesssim 
2 \sqrt{2}L^{\frac{1}{2}} \lambda_{\max}\left(\massmatrix\right) 
\rho^{\frac{1}{2}}_{h}(\nsamples)+2\sqrt{2} \left|  M_{\text{bad}} \right| L^{\frac{1}{2}} \lambda_1.
\end{align*}
We are left with estimating the integral of $\normL{E_{3}}{\dom\times\dom}$ over the set $M_{\text{good}}$. To this end, we employ the bound \eqref{E3error}, which is only valid on the good set as we have the condition \eqref{goodset}. Recall now
\begin{align*}
\normL{{E}_{3}}{\dom\times\dom} \le
 \left\|\tildstiffnessmatrix-\tildestiffnessmatrixM \right\|_{2\to 2} \left( \kllevel^{\frac{1}{2}}+\sqrt{2}C \kllevel^{\frac{1}{2}} + 4\sqrt{2} C G(\kllevel)\right) \quad \text{on }M_{\text{good}}.
\end{align*}
Thus we get 
\begin{align*}
\int_{M_{\text{good}} } \normL{E_{3}}{\dom\times\dom} \, \mathrm{d}\prob (\vec{\om}) & 
\le \left( \kllevel^{\frac{1}{2}}+\sqrt{2}C \kllevel^{\frac{1}{2}} + 4\sqrt{2} C G(\kllevel)\right) \int_{M_{\text{good}}}  \left\|\tildstiffnessmatrix-\tildestiffnessmatrixM \right\|_{2\to 2} \, \mathrm{d}\prob (\vec{\om}) 
\\&\le \left( \kllevel^{\frac{1}{2}}+\sqrt{2}C \kllevel^{\frac{1}{2}} + 4\sqrt{2} C G(\kllevel)\right)\expect{ \left\|\tildstiffnessmatrix-\tildestiffnessmatrixM \right\|_{2\to 2} }
\\& \lesssim \left( \kllevel^{\frac{1}{2}}+\sqrt{2}C \kllevel^{\frac{1}{2}} + 4\sqrt{2} C G(\kllevel)\right) \rho^{\frac{1}{2}}_{h}(\nsamples)\lambda_{\max}\left(\massmatrix\right) .
\end{align*}
Together with Proposition \ref{prop:spectralgap} and the notation from \eqref{tilderhoM} this completes the proof.
\end{proof}

\section{Discussion}
It now remains to put the result of Theorem 5.2 into context. To this end, let us aim at an error bound 
$$\expect{\| R-R^{(L,h,M)}\|_{L^{2}(\dom \times \dom )}} \leq c \varepsilon$$ 
with a prescribed, fixed accuracy $\varepsilon >0$ and a small constant $c$. The question is then: What are the conditions on the discretization parameters $L,h,M$ to achieve this aim? For the sake of simplicity, we restrict ourselves here to the situation $\lambda_{\max}\left(\massmatrix\right) \approx \lambda_{\min}\left(\massmatrix\right) \approx 1$, which is satisfied for instance for any orthonormal basis.\footnote{For the nodal basis, we would obtain, after proper scaling,  $\lambda_{\max}\left(\massmatrix\right) \approx \lambda_{\min}\left(\massmatrix\right) \approx h^d$. The resulting discussion for this case is left to the reader.}
Moreover, we assume the smoothness assumption \eqref{eqA22}, i.e., $s > d/2$.
We proceed in a term by term fashion as follows:

For the first term in our error bound \eqref{eq:final-3} we obtain from $L ^{-2s/d -1/2} \stackrel{!}{\le}  \varepsilon$ directly the condition $L \ge \varepsilon^{-2d/(4s+d)}$.
We also point out that choosing $\kllevel$ much larger will destroy the balance of the error contributions. Hence, we will assume
\begin{align}\label{aaa}
L_{\varepsilon} := \lceil \varepsilon^{-\frac{2d}{4s+d}} \rceil.
\end{align}

The coupling \eqref{aaa} influences the choices for $h$ in terms of $\kllevel$ via the Proposition  \ref{prop:spectralgap}, i.e. \eqref{condspectralgapfinal1} and the relation \eqref{condhL}. We get as sufficient condition on $h$
\begin{align}\label{finalcondhL}
h^{2s} \lesssim \min\left\{H^{\frac{1}{2}}(\kllevel) \lambda_{\kllevel+1},\lambda^2_{\kllevel} \right\} \lesssim  \min \left\{ \delta_{\kllevelrun} \lambda_{\kllevel+1},\lambda^{2}_{\kllevel}  \ : \ 1\le \kllevelrun \le \kllevel \right\}
\end{align}
with the notation \eqref{HL}. 

For the second error term we now assume $G(\kllevel)=\kllevel^{\tilde{\gamma }}$
for some $\tilde{\gamma} \in \R$. Thus  
\begin{align}\label{defgamma}
\kllevel^{\frac{1}{2}}+G(\kllevel)\lesssim \kllevel^{\gamma}, \quad \text{with} \quad \gamma:=\max\{\frac{1}{2},\tilde{\gamma}\}.
\end{align}
Consequently, we have $(\kllevel^{\frac{1}{2}}+G(\kllevel)) \tilde{\rho}^{\frac{1}{2}}_{h}(\nsamples) \lesssim \kllevel^{\gamma} \tilde{\rho}^{\frac{1}{2}}_{h}(\nsamples) $. At this point, we note that the clauses in the definition of $\tilde{\rho}_h(M)$ in \eqref{tilderhoM} make a case distinction necessary.

In the case $\dimVh < \nsamples^{\frac{1}{2\alpha+1}}$ with  $\dimVh =s^{d} h^{-d}$, we obtain $h > s \nsamples^{-\frac{1}{d(2\alpha+1)}}$. Thus, with \eqref{finalcondhL}, we have the following inequality for $h$
\begin{align}\label{firstchain}
\nsamples_{\varepsilon}^{-\frac{1}{d(2\alpha+1)}} \lesssim h_{\varepsilon} \lesssim  \min\left\{H^{\frac{1}{4s}}(\kllevel_{\varepsilon}) \lambda^{\frac{1}{2s}}_{\kllevel_{\varepsilon}+1},\lambda^{\frac{1}{s}}_{\kllevel_{\varepsilon}} \right\}.
\end{align}
Moreover we get from \eqref{tilderhoM} the bound $\tilde{\rho}_h(M)\lesssim h^{-d} \nsamples^{-1}$. We therefore infer the inequality
\begin{align*}
	(\kllevel_{\varepsilon}^{\frac{1}{2}}+G(\kllevel_{\varepsilon})) \tilde{\rho}^{\frac{1}{2}}_{h_{\varepsilon}}(\nsamples_{\varepsilon})\lesssim  \kllevel_{\varepsilon}^{\gamma} \nsamples_{\varepsilon}^{\frac {1}{2(2\alpha+1)}} \nsamples_{\varepsilon}^{-\frac{1}{2}} = \nsamples_{\varepsilon}^{-\frac{\alpha}{2\alpha+1}}\kllevel_{\varepsilon}^{\gamma}.
  \end{align*}
  Thus, we obtain the sufficient condition 
 \begin{align}\label{firstsuffcond}
\nsamples_{\varepsilon}^{-\frac{\alpha}{2\alpha+1}}\kllevel_{\varepsilon}^{\gamma}
	 \stackrel{!}{\le} \varepsilon 
\end{align}  
which implies an error bound of size $\varepsilon$. 
From this condition, we would like to infer a condition on $\nsamples_{\varepsilon}$.
The sufficient condition \eqref{firstsuffcond} implies 
\begin{align*}
\nsamples_{\varepsilon}^{-\frac{\alpha}{2\alpha+1}}\kllevel_{\varepsilon}^{\gamma}
	 \le \varepsilon \Leftrightarrow  \nsamples_{\varepsilon} \ge \varepsilon^{-\frac{2\alpha+1}{\alpha}} \kllevel_{\varepsilon}^{\gamma\frac {2\alpha+1}{\alpha} },
\end{align*}
i.e., we have a lower bound on $\nsamples_{\varepsilon}$. This means that we can satisfy \eqref{firstchain} by making  $\nsamples_{\varepsilon}$ sufficiently large.
Precisely, we have 
\begin{align}\label{condM1}
	\nsamples_{\varepsilon} \gtrsim \max\left\{ \varepsilon^{-\frac{2\alpha+1}{\alpha}} \kllevel_{\varepsilon}^{\gamma\frac {2\alpha+1}{\alpha} },\left( \min\left\{H^{\frac{1}{4s}}(\kllevel_{\varepsilon}) \lambda^{\frac{1}{2s}}_{\kllevel_{\varepsilon}+1},\lambda^{\frac{1}{s}}_{\kllevel_{\varepsilon}} \right\} \right)^{-d(2\alpha+1)}  \right\}.
\end{align}
Furthermore we have for the third error contribution the condition
\begin{align*}
 \kllevel_{\varepsilon}^{\frac{1}{2}} h_{\varepsilon}^{-d} \exp\left(- \nsamples_{\varepsilon}  \rho_1 H(\kllevel_{\varepsilon}) \right)\stackrel{!}{\le} \varepsilon \Leftrightarrow  \exp\left(- \nsamples_{\varepsilon}  \rho_1 H(\kllevel_{\varepsilon}) \right) \le  \varepsilon \kllevel_{\varepsilon}^{-\frac{1}{2}} h_{\varepsilon}^{d}.
 \end{align*}
We use \eqref{firstchain} and \eqref{aaa} to observe 
\begin{align*}
\varepsilon \kllevel_{\varepsilon}^{-\frac{1}{2}} \nsamples_{\varepsilon}^{-\frac{1}{2\alpha+1}}=\varepsilon \lceil \varepsilon^{-\frac{2d}{4s+d}} \rceil^{-\frac{1}{2}} \nsamples_{\varepsilon}^{-\frac{1}{2\alpha+1}}	 \le \varepsilon \kllevel_{\varepsilon}^{-\frac{1}{2}} h_{\varepsilon}^{d}.
\end{align*}
Now, we define $\bar{\nsamples}_{\varepsilon} \in \N$ via 
\begin{align}\label{barn}
	\bar{\nsamples}_{\varepsilon}:=\arg\min\left\{M \in \N :  \exp\left(- M  \rho_1 H(\kllevel_{\varepsilon})\right) \le \varepsilon \lceil \varepsilon^{-\frac{2d}{4s+d}} \rceil^{-\frac{1}{2}} M^{-\frac{1}{2\alpha+1}} \right\}.
\end{align}
%

Altogether, to ensure an overall error bound of size $4\varepsilon$ in the case $\dimVh < \nsamples^{\frac{1}{2\alpha+1}}$, this yields the sufficient conditions 
\begin{align}
\label{finalLeps}
L_{\varepsilon} &= \lceil \varepsilon^{-\frac{2d}{4s+d}} \rceil,\\
\label{finalMeps}
 \nsamples_{\varepsilon} &\gtrsim \max\left\{\bar{\nsamples}_{\varepsilon},  \varepsilon^{-\frac{2\alpha+1}{\alpha}} \kllevel_{\varepsilon}^{\gamma\frac {2\alpha+1}{\alpha} },\left( \min\left\{H^{\frac{1}{4s}}(\kllevel_{\varepsilon}) \lambda^{\frac{1}{2s}}_{\kllevel_{\varepsilon}+1},\lambda^{\frac{1}{s}}_{\kllevel_{\varepsilon}} \right\} \right)^{-d(2\alpha+1)}\right\},\\
\label{finalheps}
h_{\varepsilon} &\sim \min\left\{ \nsamples_{\varepsilon}^{-\frac{1}{d(2\alpha+1)}},h_0\right\},
\end{align}
where $h_0$ stems from Proposition \ref{prop:babuska}.

Now, we turn to the second clause in the definition of $\tilde{\rho}_h(M)$ in \eqref{tilderhoM}, i.e., we consider the case $\dimVh \ge \nsamples^{\frac{1}{2\alpha+1}}$ with $\dimVh= s^{d}h^{-d}$. Then we obtain $h \le s  \nsamples^{-\frac{1}{d(2\alpha+1)}}$ and the inequalities \eqref{firstchain} change to 
\begin{align}\label{secondchain}
 h_{\varepsilon} \lesssim  \min\left\{H^{\frac{1}{4s}}(\kllevel_{\varepsilon}) \lambda^{\frac{1}{2s}}_{\kllevel_{\varepsilon}+1},\lambda^{\frac{1}{s}}_{\kllevel_{\varepsilon}}, \nsamples_{\varepsilon}^{-\frac{1}{d(2\alpha+1)}}  \right\},
\end{align} 
i.e., we have no lower bound for  $h_{\varepsilon}$ at this point.
Recall \eqref{tilderhoM}, i.e., 
$\tilde{\rho}_{h}(\nsamples)=\nsamples^{-\frac{2\alpha}{2\alpha+1}} +d \log(h^{-1})\nsamples^{-1}$.
Now, we distinguish whether $\nsamples^{-\frac{2\alpha}{2\alpha+1}} \le d \log(h^{-1})\nsamples^{-1}$ or  $\nsamples^{-\frac{2\alpha}{2\alpha+1}} \ge d \log(h^{-1})\nsamples^{-1}$ holds.
Moreover, we observe that for all $ \beta\ge 0$ there is a $h_{\beta}>0$ such that
\begin{align*}
 \log(h^{-1})^{\frac{1}{2}} \le h^{-\beta} \quad \text{for all }  h\le h_{\beta}.
\end{align*}
Thus for a fixed $\beta >0$, we get the chain of inequalities using $\tilde{\rho}_{h}(\nsamples_{\varepsilon})=\nsamples_{\varepsilon}^{-\frac{2\alpha}{2\alpha+1}} +d \log(h_{\varepsilon}^{-1})\nsamples_{\varepsilon}^{-1}\le 2 d \log(h_{\varepsilon}^{-1})\nsamples_{\varepsilon}^{-1}$
\begin{align*}
(\kllevel_{\varepsilon}^{\frac{1}{2}}+G(\kllevel_{\varepsilon}))  \tilde{\rho}^{\frac{1}{2}}_{h_{\varepsilon}}(\nsamples_{\varepsilon})&\lesssim  \kllevel_{\varepsilon}^{\gamma}  d^{\frac{1}{2}} \log(h_{\varepsilon}^{-1})^{\frac{1}{2}}\nsamples^{-\frac{1}{2}} \lesssim \kllevel_{\varepsilon}^{\gamma} h_{\varepsilon}^{-\beta} \nsamples^{-\frac{1}{2}}
 \le  \kllevel_{\varepsilon}^{\gamma}  \nsamples_{\varepsilon}^{-\frac{1}{2}}\lambda_{\kllevel_{\varepsilon}}^{-\frac{\beta}{s}} \\&\le  \kllevel_{\varepsilon}^{\gamma}  \nsamples_{\varepsilon}^{-\frac{1}{2}} \kllevel_{\varepsilon}^{\frac{(2s+d)\beta}{ds}}= \nsamples_{\varepsilon}^{-\frac{1}{2}}\kllevel_{\varepsilon}^{\frac{(2s+d)\beta+sd\gamma}{ds} }.
  \end{align*}
Then, using \eqref{aaa} we encounter the sufficient condition 
\begin{align*}
\nsamples_{\varepsilon}^{-\frac{1}{2}}\kllevel_{\varepsilon}^{\frac{(2s+d)\beta+sd\gamma}{sd} } \stackrel{!}{\le } \varepsilon \Leftrightarrow \nsamples_{\varepsilon} \ge \kllevel_{\varepsilon}^{\frac{2(2s+d)\beta+2sd\gamma}{sd} } \varepsilon^{-2}=\lceil \varepsilon^{-\frac{2d}{4s+d}} \rceil^{\frac{2(2s+d)\beta+2sd\gamma}{sd} } \varepsilon^{-2}
\end{align*}
to get an error bound of size $\varepsilon$ for some  $\beta >0$ and $h_{\varepsilon}\le h_{\beta}$.
For the third error contribution, we have with \eqref{condspectralgapfinal1} the sufficient condition
\begin{align*}
\kllevel_{\varepsilon}^{\frac{1}{2}} h_{\varepsilon}^{-d} \exp\left(- \nsamples_{\varepsilon}  \rho_1 H(\kllevel_{\varepsilon}) \right) \stackrel{!}{\le} \varepsilon \Leftrightarrow \left(\kllevel_{\varepsilon}^{\frac{1}{2}}\varepsilon^{-1}\exp\left(- \nsamples_{\varepsilon}  \rho_1 H(\kllevel_{\varepsilon}) \right)\right)^{\frac{1}{d}} \le h_{\varepsilon}
\end{align*}
to ensure an error bound of size $\varepsilon$. 
We now define $\tilde{\nsamples}_{\varepsilon} \in \N$ via 
\begin{align}\label{tilden}
	\tilde{\nsamples}_{\varepsilon}:=\arg\min\left\{M \in \N :  \kllevel_{\varepsilon}^{\frac{1}{2}}\varepsilon^{-1}\exp\left(- M  \rho_1 H(\kllevel_{\varepsilon}) \right) \le M^{-\frac{1}{2\alpha+1}}\right\}.
\end{align}
Altogether, in the case $\dimVh > \nsamples^{\frac{1}{2\alpha+1}}$, $\nsamples^{-\frac{2\alpha}{2\alpha+1}} \le d \log(h^{-1})\nsamples^{-1}$, this yields the sufficient conditions
\begin{align}
\label{finalLeps2}
&L_{\varepsilon} = \lceil \varepsilon^{-\frac{2d}{4s+d}} \rceil,\\
\label{finalMeps2}
&  \nsamples_{\varepsilon} \gtrsim \max\left\{\tilde{\nsamples}_{\varepsilon},\lceil \varepsilon^{-\frac{2d}{4s+d}} \rceil^{\frac{2(2s+d)\beta+2sd\gamma}{sd} } \varepsilon^{-2} \right\},\\
\label{finalheps2}
& \left(\kllevel_{\varepsilon}^{\frac{1}{2}}\varepsilon^{-1}\exp\left(- \nsamples_{\varepsilon}  \rho_1 H(\kllevel_{\varepsilon}) \right)\right)^{\frac{1}{d}}\lesssim  h_{\varepsilon} \lesssim  \min\left\{H^{\frac{1}{4s}}(\kllevel_{\varepsilon}) \lambda^{\frac{1}{2s}}_{\kllevel_{\varepsilon}+1},\lambda^{\frac{1}{s}}_{\kllevel_{\varepsilon}}, \nsamples_{\varepsilon}^{-\frac{1}{d(2\alpha+1)}} , h_0
\right\},
\end{align}
to ensure an overall error bound of size $3\varepsilon$, where $h_0$ stems from Proposition \ref{prop:babuska} and \eqref{tilden} ensures that the interval for $h_{\varepsilon}$ is nontrivial.
 
Finally, we consider the case $\dimVh > \nsamples_{\varepsilon}^{\frac{1}{2\alpha+1}}$ and $\nsamples_{\varepsilon}^{-\frac{2\alpha}{2\alpha+1}} \ge d \log(h_{\varepsilon}^{-1})\nsamples_{\varepsilon}^{-1}$. As $1> \frac{2\alpha}{2\alpha+1}$, we note, that, for fixed $h_{\varepsilon}$, this inequality can be satisfied if $\nsamples_{\varepsilon}$ is large enough. 
Moreover, we observe 
\begin{align*}
\nsamples^{-\frac{2\alpha}{2\alpha+1}} \ge d \log(h^{-1})\nsamples^{-1} &\Leftrightarrow \nsamples^{\frac{1}{2\alpha+1}}  \ge d \log(h^{-1})\\& \Leftrightarrow -d^{-1}\nsamples^{\frac{1}{2\alpha+1}}\le \log(h) \Leftrightarrow \exp\left(-d^{-1}\nsamples^{\frac{1}{2\alpha+1}} \right) \le h.
\end{align*}
This yields a lower bound on $h_{\varepsilon}$ as
\begin{align}\label{thirdchainfirstcase}
 \exp\left(-d^{-1}\nsamples_{\varepsilon}^{\frac{1}{2\alpha+1}} \right) \lesssim  h_{\varepsilon} \lesssim  \min\left\{H^{\frac{1}{4s}}(\kllevel_{\varepsilon}) \lambda^{\frac{1}{2s}}_{\kllevel_{\varepsilon}+1},\lambda^{\frac{1}{s}}_{\kllevel_{\varepsilon}}, \nsamples_{\varepsilon}^{-\frac{1}{d(2\alpha+1)}}  \right\}.
\end{align}
We now define $\hat{\nsamples}_{\varepsilon} \in \N$ via 
\begin{align}\label{hatn}
	\hat{\nsamples}_{\varepsilon}:=\arg\min\left\{M \in \N : \exp\left(-d^{-1}M^{\frac{1}{2\alpha+1}} \right)  \le M^{-\frac{1}{d(2\alpha+1)}}  \right\},
\end{align}
and hence condition \eqref{thirdchainfirstcase} can be fulfilled for $\nsamples_{\varepsilon}\ge \hat{\nsamples}_{\varepsilon}$.
As we now have $\tilde{\rho}_{h}(\nsamples)=\nsamples^{-\frac{2\alpha}{2\alpha+1}} +d \log(h^{-1})\nsamples^{-1} \le 2 \nsamples^{-\frac{2\alpha}{2\alpha+1}} $, we obtain the chain of inequalities
\begin{align*}
(\kllevel_{\varepsilon}^{\frac{1}{2}}+G(\kllevel_{\varepsilon}))\tilde{\rho}^{\frac{1}{2}}_{h_{\varepsilon}}(\nsamples_{\varepsilon})&\lesssim  \kllevel_{\varepsilon}^{\gamma} \nsamples_{\varepsilon}^{-\frac{\alpha}{2\alpha+1}}   \lesssim  \kllevel_{\varepsilon}^{\gamma}  \nsamples_{\varepsilon}^{-\frac{\alpha}{2\alpha+1}}.
  \end{align*}
Thus, we encounter the sufficient condition
\begin{align*}
 \kllevel_{\varepsilon}^{\gamma}  \nsamples_{\varepsilon}^{-\frac{\alpha}{2\alpha+1}}\stackrel{!}{\le } \varepsilon \Leftrightarrow \nsamples_{\varepsilon} \ge \kllevel_{\varepsilon}^{\frac{\gamma(2\alpha+1)}{\alpha}}\varepsilon^{-\frac{2\alpha+1}{\alpha}}
\end{align*}
to ensure an error bound of size $\varepsilon$.
For the fourth error contribution, we derive the inequality
\begin{align}\label{suff3}
 \kllevel_{\varepsilon}^{\frac{1}{2}} h_{\varepsilon}^{-d} \exp\left(- \nsamples_{\varepsilon}  \rho_1 H(\kllevel_{\varepsilon}) \right) \stackrel{!}{\le}\varepsilon \Leftrightarrow  \left(\kllevel_{\varepsilon}^{\frac{1}{2}}\varepsilon^{-1}\exp\left(- \nsamples_{\varepsilon}  \rho_1 H(\kllevel_{\varepsilon}) \right)\right)^{\frac{1}{d}} \le h_{\varepsilon}.
\end{align}
Moreover, we can now define 
 $\nsamples^{\prime}_{\varepsilon} \in \N$ via 
\begin{align}\label{primen}
	\nsamples^{\prime}_{\varepsilon}:=\arg\min\left\{M \in \N :   \left(\kllevel_{\varepsilon}^{\frac{1}{2}}\varepsilon^{-1}\exp\left(- \nsamples_{\varepsilon}  \rho_1 H(\kllevel_{\varepsilon}) \right)\right)^{\frac{1}{d}}\le  \exp\left(-d^{-1}\nsamples_{\varepsilon}^{\frac{1}{2\alpha+1}} \right) \right\}.
\end{align}
This finally yields the sufficient conditions
\begin{align}
\label{finalLeps3}
&L_{\varepsilon} = \lceil \varepsilon^{-\frac{2d}{4s+d}} \rceil,\\
\label{finalMeps3}
&   \nsamples_{\varepsilon} \gtrsim \max\left\{\hat{\nsamples}_{\varepsilon},\nsamples^{\prime}_{\varepsilon},  \kllevel_{\varepsilon}^{\frac{\gamma(2\alpha+1)}{\alpha}}\varepsilon^{-\frac{2\alpha+1}{\alpha}},\left( \min\left\{H^{\frac{1}{4s}}(\kllevel_{\varepsilon}) \lambda^{\frac{1}{2s}}_{\kllevel_{\varepsilon}+1},\lambda^{\frac{1}{s}}_{\kllevel_{\varepsilon}} \right\} \right)^{-d(2\alpha+1)}\right\}\\
\label{finalheps3}
& \left(\kllevel_{\varepsilon}^{\frac{1}{2}}\varepsilon^{-1}\exp\left(- \nsamples_{\varepsilon}  \rho_1 H(\kllevel_{\varepsilon}) \right)\right)^{\frac{1}{d}}\lesssim  h_{\varepsilon} \lesssim  \min\left\{H^{\frac{1}{4s}}(\kllevel_{\varepsilon}) \lambda^{\frac{1}{2s}}_{\kllevel_{\varepsilon}+1},\lambda^{\frac{1}{s}}_{\kllevel_{\varepsilon}}, \nsamples_{\varepsilon}^{-\frac{1}{d(2\alpha+1)}} , \exp\left(-d^{-1}\nsamples^{\frac{1}{2\alpha+1}} \right),h_0 \right\},
\end{align}
for the case $\dimVh > \nsamples^{\frac{1}{2\alpha+1}}$, $\nsamples^{-\frac{2\alpha}{2\alpha+1}} \ge d \log(h^{-1})\nsamples^{-1}$ which ensure an overall error bound of size $4\varepsilon$. Note that we always fix $L_{\varepsilon} $ in the first place and that we can always satisfy the conditions \eqref{finalMeps},  \eqref{finalMeps2} and  \eqref{finalheps3} by choosing $\nsamples_{\varepsilon}$ large enough. 
This then fixes $h_{\varepsilon}$ in \eqref{finalheps} and fixes the potential intervals for $h_{\varepsilon}$ in \eqref{finalheps2} and \eqref{finalheps3}.
Of course in practice we would always choose  $\nsamples_{\varepsilon}$ as small and $h_{\varepsilon}$ as large as possible.

We summarize this discussion in a corollary.
\begin{corollary}
Fix an accuracy $\varepsilon>0$ and choose $L_{\varepsilon} = \lceil \varepsilon^{-\frac{2d}{4s+d}} \rceil $.
Then, we the choose $\nsamples_{\varepsilon}$ according to \eqref{finalMeps},  \eqref{finalMeps2} and  \eqref{finalheps3}. Then, we choose $h_{\varepsilon}$ as in \eqref{finalheps} or as maximal value of the potential intervals in \eqref{finalheps2} and \eqref{finalheps3}.
Then, in all possible cases, the error bound 
\begin{align*}
\expect{\norm{{R}-R^{(\kllevel;h;\nsamples)}}_{L^{2}(\dom\times\dom)}} \le 3 \varepsilon
\end{align*}
holds.
\end{corollary}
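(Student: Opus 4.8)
The plan is to assemble the case-by-case analysis of the preceding discussion into a single statement, bounding each of the three error contributions in Theorem \ref{thm:final2} separately by $\varepsilon$. Under the normalization $\lambda_{\max}(\massmatrix)\approx\lambda_{\min}(\massmatrix)\approx 1$ adopted here, the estimate \eqref{eq:final-3} of Theorem \ref{thm:final2} reduces to
\[
\expect{\norm{R-R^{(\kllevel;h;\nsamples)}}_{L^{2}(\dom\times\dom)}}\lesssim \kllevel^{-\frac{2s}{d}-\frac{1}{2}}+\left(\kllevel^{\frac{1}{2}}+G(\kllevel)\right)\tilde{\rho}^{\frac{1}{2}}_{h}(\nsamples)+\kllevel^{\frac{1}{2}}h^{-d}\exp\left(-\nsamples\rho_1 H(\kllevel)\right),
\]
and it suffices to drive each of the three summands below $\varepsilon$.

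First I would fix the truncation term. The choice $L_{\varepsilon}=\lceil\varepsilon^{-\frac{2d}{4s+d}}\rceil$ from \eqref{aaa} gives $L_{\varepsilon}^{-\frac{2s}{d}-\frac{1}{2}}\le\varepsilon$ immediately, and through the spectral-gap requirement \eqref{condspectralgapfinal1} together with the compatibility relation \eqref{condhL} it also constrains the mesh size via \eqref{finalcondhL}. With $L_{\varepsilon}$ fixed, the remaining work is to choose $\nsamples_{\varepsilon}$ and $h_{\varepsilon}$ so that the variance term and the tail term are each at most $\varepsilon$.

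The core of the argument is the case distinction forced by the definition of $\tilde{\rho}_{h}(\nsamples)$ in \eqref{tilderhoM}: one must separate $\dimVh<\nsamples^{\frac{1}{2\alpha+1}}$ from $\dimVh\ge\nsamples^{\frac{1}{2\alpha+1}}$, and in the latter regime further distinguish whether $\nsamples^{-\frac{2\alpha}{2\alpha+1}}$ or the logarithmic term $d\log(h^{-1})\nsamples^{-1}$ dominates. In each of the three resulting cases I would invoke the chain of inequalities already derived: \eqref{firstsuffcond}--\eqref{condM1} with $\bar{\nsamples}_{\varepsilon}$ from \eqref{barn} for the first case; the estimates culminating in \eqref{finalMeps2}--\eqref{finalheps2} with $\tilde{\nsamples}_{\varepsilon}$ from \eqref{tilden} for the second; and \eqref{thirdchainfirstcase}--\eqref{primen} with $\hat{\nsamples}_{\varepsilon}$ and $\nsamples^{\prime}_{\varepsilon}$ for the third. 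In every case the prescribed lower bound on $\nsamples_{\varepsilon}$ yields $(\kllevel^{\frac{1}{2}}+G(\kllevel))\tilde{\rho}^{\frac{1}{2}}_{h}(\nsamples)\le\varepsilon$, while the exponential decay in $\nsamples$ against the at most polynomial prefactors gives $\kllevel^{\frac{1}{2}}h^{-d}\exp(-\nsamples\rho_1 H(\kllevel))\le\varepsilon$.

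The main obstacle is not any individual estimate but the \emph{feasibility} of the coupled choices: one must verify that the admissible interval for $h_{\varepsilon}$ prescribed in \eqref{finalheps2} and \eqref{finalheps3} is nonempty, i.e. that its lower bound, originating from the tail term, does not exceed its upper bound, originating from the spectral-gap and mesh-resolution constraints. This is precisely the role of the auxiliary sample sizes $\bar{\nsamples}_{\varepsilon}$, $\tilde{\nsamples}_{\varepsilon}$, $\hat{\nsamples}_{\varepsilon}$ and $\nsamples^{\prime}_{\varepsilon}$, each defined as the smallest $\nsamples$ for which the exponential tail falls below the competing algebraic bound; taking $\nsamples_{\varepsilon}$ at least as large as their maximum guarantees a nontrivial interval for $h_{\varepsilon}$ and simultaneously enforces the two $\varepsilon$-bounds above. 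Since each of the three contributions is then bounded by $\varepsilon$, their sum is at most $3\varepsilon$, which is the claimed estimate.
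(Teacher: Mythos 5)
Your proposal is correct and follows essentially the same route as the paper: the corollary is just the summary of the preceding discussion, and you reproduce that discussion's structure — bounding the truncation term via the choice of $L_{\varepsilon}$, splitting into the three regimes dictated by $\tilde{\rho}_{h}(\nsamples)$, and using the auxiliary sample sizes $\bar{\nsamples}_{\varepsilon}$, $\tilde{\nsamples}_{\varepsilon}$, $\hat{\nsamples}_{\varepsilon}$, $\nsamples^{\prime}_{\varepsilon}$ to guarantee both the tail bound and the nonemptiness of the admissible interval for $h_{\varepsilon}$. Your explicit emphasis on feasibility of the coupled choices is exactly the point the paper makes implicitly through those definitions, so there is nothing further to add.
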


Note at this point that a practical goal is to select $h_{\varepsilon}$ as large as possible and $\nsamples_{\varepsilon}$ as small as possible and to still ensure an error level of size $3\varepsilon$.
Hence, we advocate to stick to the second case, where a reasonable choice as in  \eqref{finalheps2} and furthermore for $\nsamples_{\varepsilon}$ as in  \eqref{finalMeps2} is possible. This leads naturally to the tapering estimator.

In general our estimates involve the values of $G(\kllevel)$ and $H(\kllevel)$, i.e., spectral properties of the unknown kernel $R$. These numbers can not be easily determined at all. However, in the case of Brownian motion, we were able to derive precise values in \eqref{eq:gl:asymptotic} and \eqref{eq:Hlasymptotic}.  Now, let us finally consider the most simple case of univariate Brownian motion and let us discuss the tapering estimator in that situation.
The univariate Brownian motion (i.e., $d=1$) was given in \eqref{univariateBrownianMotion}, i.e.,
\begin{align*}
	R^{(1)}_{B}:(0,1)^2 \to \R, \quad R^{(1)}_{B}(x,x^{\prime}):=\min\{x,x^{\prime}\}.
\end{align*}
with eigenvalues $\lambda_{R^{(1)}_{B}}(\kllevelrun) \sim \kllevelrun^{-2}$ and the constant from \eqref{defgamma} as $\gamma_{R^{(1)}_{B}}=\frac{3}{2}$. The spatial smoothness is $s=\frac{1}{2}-\delta$ for arbitrary small $\delta>0$. First, we observe for \eqref{finalLeps2} that
\begin{align}
L_{\varepsilon} = \lceil \varepsilon^{-\frac{2d}{4s+d}} \rceil \sim \lceil \varepsilon^{-\frac{2}{3}} \rceil \label{finalLeps3special}.
\end{align}
From  \eqref{finalMeps2}, we have
\begin{align*}
\nsamples_{\varepsilon} \gtrsim\max\left\{\tilde{\nsamples}_{\varepsilon},\lceil \varepsilon^{-\frac{2}{3}} \rceil^{8\beta+3} \varepsilon^{-2} \right\}.
\end{align*}
For the term $\tilde{\nsamples}_{\varepsilon}$, defined in \eqref{tilden} as
\begin{align*}
	\tilde{\nsamples}_{\varepsilon}:=\arg\min\left\{M \in \N :  \kllevel_{\varepsilon}^{\frac{1}{2}}\varepsilon^{-1}\exp\left(- M  \rho_1 H(\kllevel_{\varepsilon}) \right) \le M^{-\frac{1}{2\alpha+1}}\right\},
\end{align*}
we first note that solutions $x^{\star}$ to the equation
\begin{align*}
 \kllevel_{\varepsilon}^{\frac{1}{2}}\varepsilon^{-1}\exp\left(- M  \rho_1 H(\kllevel_{\varepsilon}) \right) = M^{-\frac{1}{2\alpha+1}}
\end{align*} 
can be written in terms of the \emph{product logarithm function} $W$. We have 
\begin{align*}
	x^{\star}= \frac{1}{(2\alpha+1)\rho_1 H(\kllevel_{\varepsilon})} W \left( -(2\alpha+1)\rho_1 H(\kllevel_{\varepsilon}) \left( \kllevel_{\varepsilon}^{-\frac{1}{2}}\varepsilon \right)^{(2\alpha+1)} \right)
\end{align*}
and due to monotonicity, we have that $\tilde{\nsamples}_{\varepsilon}=\lceil x^{\star}\rceil$. Moreover, we observe that we can make the bound on $\nsamples_{\varepsilon}$ a bit larger by considering $\alpha=0$, i.e., we can make the bound independent of $\alpha$.

To get a bound on $h_{\varepsilon}$ from \eqref{finalheps2}, we consider the upper bound (as we would like to choose $h_{\varepsilon}$ as large as possible).
Thus, we see that the term $ H^{\frac{1}{4s}}(\kllevel_{\varepsilon}) \lambda^{\frac{1}{2s}}_{\kllevel_{\varepsilon}+1} $ dominates the eigenvalue term $\lambda^{\frac{1}{s}}_{\kllevel_{\varepsilon}}$ and we obtain
\begin{align*}
h_{\varepsilon}:=\min\left\{ \lceil \varepsilon^{\frac{10}{3}} \rceil , \nsamples_{\varepsilon}^{-1},h_{0} \right\}.
\end{align*}

The case of  higher dimensional Brownian motion could be done in a similar way. Note to this end that $G$ is completely independent of the dimension whereas $H$ decays exponentially with the dimension. The latter exponentially influences the choice for $h_{\epsilon}$  in \eqref{finalheps2} and for $M_{\epsilon}$ in \eqref{finalMeps2} for growing dimensions.

 Finally, let us remark shortly on the cost involved in our resulting approximation algorithm with respect to $\kllevel, h,\nsamples$. The numerical cost do not depend on $\kllevel$ directly as $\kllevel$ is implicitly contained in $h_{\varepsilon}$  as $\kllevel \le \dimVh \sim h^{-d}$. Furthermore it involves $\mathcal{O}(\nsamples h^{-2d})$ operations to assemble the $\nsamples$  associated discrete eigensystem \eqref{genalizedeigenvalue3-estimator}  and it involves $\mathcal{O}(h^{-3d})$ operations to solve the eigensystem in a naive direct way. Moreover note that there exist faster approximative solution techniques, like multipole or algebraic multigrid methods, with substantially reduced cost to tackle the task of the eigensystem solution.

A final analysis of the balancing of the cost versus accuracy and thus the corresponding $\epsilon$-complexity of our approach is left to the reader.

\section{Concluding remarks}\label{sec:conclusion}
We discussed the problem to recover approximately the covariance of a Gaussian random field from a finite number of discretized observations. To this end, we coupled recent sharp estimates on the eigenvalue decay of continuous covariance operators with optimal statistical (tapering) estimators for covariance matrices. The combination of these techniques additionally involved a finite element discretization which made all operators finite rank  and made our approach feasible.

We provided new and sharp error estimates in  
expectation for the reconstruction of the full covariance operator taking the number of samples, the finite element discretization and the truncation of the Karhunen Lo\`{e}ve expansion of the covariance operator and thus its regularity into account.

Note that in contrast to most analytical approaches, we do not need to have access to the continuous covariance function. We instead reconstruct the covariance operator from a finite number of discrete samples. This makes our approach feasible when only measurements of the random coefficients are available. Such a situation is indeed often encountered in many practical problems in uncertainty quantification and machine learning.


\section*{Acknowledgments}
The author MG was partially supported by the Hausdorff Center for Mathematics in Bonn and the Sonderforschungsbereich 1060 {\em The Mathematics of Emergent Effects} funded by the Deutsche Forschungsgemeinschaft.
GL acknowledges the support from the Royal Society via the Newton International fellowship. Parts of a preliminary version of this work were obtained during her visit to IPAM in the Long program: Computational Issues in Oil Field Applications.
CR thanks the Institute for Numerical Simulation for its hospitality.

\bibliographystyle{abbrv}
\bibliography{reference}

\end{document}